\definecolor{Maroon}{HTML}{ad2231}
\definecolor{webgreen}{HTML}{008000}
\newtheorem{corollary}{Corollary}
\newtheorem{proposition}{Proposition}
\newtheorem{lemma}{Lemma}
\newtheorem{remark}{Remark}
\newtheorem{theorem}{Theorem}
\newtheorem{definition}{Definition}
\theoremstyle{definition}
\newcounter{Cond}
\newcounter{Cond2}
\newcommand{\R}{\mathbb{R}}
\newcommand{\bS}{\mathbb{S}}
\newcommand{\cN}{\mathcal{N}}
\newcommand{\cT}{\mathcal{T}}
\newcommand{\cEG}{\mathcal{EG}}
\newcommand{\bL}{\mathbb{L}}
\renewcommand{\D}{\mathbb{D}}
\begin{document}
\title{Convergence of trees with a given degree sequence and of their associated laminations.}
\author{Gabriel Berzunza Ojeda\footnote{ {\sc Department of Mathematical Sciences, University of Liverpool, United Kingdom.} E-mail: gabriel.berzunza-ojeda@liverpool.ac.uk }, \,\, Cecilia Holmgren\footnote{ {\sc Department of Mathematics, Uppsala University, Sweden.} E-mail: cecilia.holmgren@math.uu.se} \, \, and \, \, Paul Thévenin\footnote{ {Univ Angers, CNRS, LAREMA, SFR MATHSTIC, F-49000 Angers, France.} E-mail: paul.thevenin@univ-angers.fr} 
}
\date{ }
\maketitle

\vspace{0.1in}

\begin{abstract} 
In this paper, we study uniform rooted plane trees with given degree sequence. We show that, under some natural hypotheses on the degree sequences, these trees converge toward the so-called Inhomogeneous Continuum Random Tree after renormalisation. Our proof relies on the convergence of a modification of the well-known $\L$ukasiewicz path. We also give a unified treatment of the limit, as the number of vertices tends to infinity, of the fragmentation process derived by cutting down the edges of a tree with a given degree sequence, including its geometric representation by a lamination-valued process. The latter is a collection of nested laminations, which are compact subsets of the unit disk made of non-crossing chords. In particular, we prove an equivalence between planar Gromov-weak convergence of discrete trees and the convergence of their associated lamination-valued processes.
\end{abstract}

\noindent {\sc Key words and phrases}: Bridge with exchangeable increments, continuum random tree, fragmentation processes, Inhomogeneous CRT, lamination of the disk, scaling limits. 

\noindent {\sc MSC 2020 Subject Classifications}: 60C05, 60F17, 60G09, 05C05.


\section{Introduction}

In his seminal papers \cite{AldousII, AldousI, AldousIII}, Aldous introduced the so-called Brownian Continuum Random Tree (Brownian CRT) as the limit - after renormalisation of distances - of a uniform tree with $n$ vertices, and more generally, of critical size-conditioned Galton–Watson trees with finite offspring variance. The Brownian CRT has appeared since then as the limit of various random tree-like structures such as multi-type Galton-Watson trees \cite{Mie08} or unordered binary trees \cite{MarMie11}. Over the last decade, the study of scaling limits of large discrete random trees toward a random continuum tree has seen numerous developments and found extensive applications in the study of other random structures. These include, among others, random planar maps \cite{LeGall2013}, random dissections of regular polygons \cite{AldousT1994, AldousTT1994}, fragmentation and coalescent processes \cite{AldousPitman1998} or Erd\H{o}s-R\'enyi random graphs in the critical window \cite{AddarioT2012}. In this work, we investigate the scaling limit of trees with a given degree sequence, along with their associated laminations (sets of chords in the unit disk).

For a finite rooted plane tree $\mathbf{t}$, let $V(\mathbf{t})$ denote its number of vertices. If $V(\mathbf{t}) \geq 1$, then for $i \geq 0$, let $N_{i}(\mathbf{t})$ be the number of vertices in $\mathbf{t}$ having $i$ children (or out-degree $i$). The sequence $(N_{i}(\mathbf{t}), i \geq 0)$ is called the degree sequence of $\mathbf{t}$, and satisfies $V(\mathbf{t}) = \sum_{i \geq 0} N_{i}(\mathbf{t})= 1 + \sum_{i \geq 0} i N_{i}(\mathbf{t})$. Moreover, for $n \in \mathbb{N}$, a sequence $\mathbf{s}_{n} = (N_{i}^{n}, i \geq 0)$ of non-negative integers is the degree sequence of some finite rooted plane tree if and only if $\sum_{i \geq 0} N_{i}^{n}= 1 + \sum_{i \geq 0} i N_{i}^{n} < \infty$. Then, a random tree with given degree sequence (TGDS) $\mathbf{s}_{n}$ is a random variable whose law is uniform on the set $\mathbb{T}_{\mathbf{s}_{n}}$ of rooted plane trees with $V_{n} \coloneqq \sum_{i \geq 0} N_{i}^{n}$ vertices amongst which $N_{i}^{n}$ have $i$ offspring for every $i \geq 0$, and $E_{n} \coloneqq \sum_{i \geq 0} i N_{i}^{n}$ edges.

\subsection{Scaling limits of trees}

Scaling limits for trees with given degree sequences were first studied by Broutin \& Marckert \cite{Broutin2014}. Let $\mathbf{s}_{n}$ be a degree sequence and $\mathbf{t}_{n}$ be a random tree sampled uniformly at random in $\mathbb{T}_{\mathbf{s}_{n}}$. We see $\mathbf{t}_{n}$ as a rooted metric measure space $(\mathbf{t}_{n}, r_{n}^{\text{gr}}, \rho_{n}, \mu_{n})$, i.e.\ $\mathbf{t}_{n}$ is identified with its set of $V_{n}$ vertices, $r_{n}^{\text{gr}}$ is the graph-distance on $\mathbf{t}_{n}$ (that is, all edges have length $1$), $\rho_{n}$ is its root, and $\mu_{n}$  is the uniform measure on the set of vertices of $\mathbf{t}_{n}$. Consider the global variance term $\sigma_{n}^{2} = \sum_{i \geq 1} i (i-1) N_{i}^{n}$ for the degree sequence $\mathbf{s}_{n}$ (with $\sigma_n \geq 0$), and the maximum degree $\Delta_{n} = \max \{i \geq 0: N_{i}^{n} >0\}$ of any tree with degree sequence $\mathbf{s}_{n}$. Under technical assumptions on $\mathbf{s}_{n}$, in particular $\sigma_{n}^{2} \sim \sigma^2 V_{n}$ as $n \rightarrow \infty$ for some $\sigma \in (0, \infty)$ and $\lim_{n \rightarrow \infty} \sigma_{n}^{-1} \Delta_{n} = 0$, Broutin \& Marckert \cite{Broutin2014} showed the convergence in distribution,
\begin{eqnarray*}
\left(\mathbf{t}_{n},  \frac{\sigma_{n}}{V_{n}} r_{n}^{\text{gr}}, \rho_{n}, \mu_{n} \right) \xrightarrow[ ]{d} (\mathcal{T}_{\rm Br}, r_{\rm Br}, \rho_{\rm Br}, \mu_{\rm Br}), \hspace*{3mm} \text{as} \hspace*{2mm}  n \rightarrow \infty,
\end{eqnarray*}

\noindent for the so-called Gromov-Hausdorff-Prohorov topology, where $(\mathcal{T}_{\rm Br}, r_{\rm Br}, \rho_{\rm Br}, \mu_{\rm Br})$ is the Brownian CRT. In particular, $\mu_{\rm Br}$ is a probability measure supported on the leaves of $\mathcal{T}_{\rm Br}$.

Marzouk \cite{Cyril2019} proved a weaker convergence (in the sense of subtrees spanned by finitely many random vertices) by requiring only the condition $\lim_{n \rightarrow \infty} \sigma_{n}^{-1} \Delta_{n} = 0$. To be precise, fix $q \geq 1$ and let $u_{1}, \dots u_{q}$ be $q$ i.i.d.\ uniform random vertices of $\mathbf{t}_{n}$. The reduced tree $\mathbf{t}_{n}^{(q)}$ is obtained by keeping only the root of $\mathbf{t}_{n}$, these $q$ vertices, the branching points (if any), and then connecting by a single edge two of these vertices if one is the ancestor of the other in $\mathbf{t}_{n}$ and there is no other vertex of $\mathbf{t}_{n}^{(q)}$ inbetween. We define the length of an edge $e$ in $\mathbf{t}_{n}^{(q)}$ as the number of edges in $\mathbf{t}_{n}$ between the endpoints of $e$. In particular, the combinatorial structure of $\mathbf{t}_{n}^{(q)}$ is that of a rooted plane tree with at most $q$ leaves, so there are only finitely many possibilities, and thus there are a bounded number of edge lengths to record. Following Aldous \cite{AldousIII}, if $\mathbf{t}_{n}^{(q)}$ has $k$ vertices (and thus $k-1$ edges), one can formally regard $\mathbf{t}_{n}^{(q)}$ as a rooted plane tree with edge lengths, that is, a vector $(\hat{\mathbf{t}}, \ell_{1}, \dots, \ell_{k-1}) \in \mathbb{T}_{k} \times \mathbb{R}^{k-1}_{+}$, where $\mathbb{T}_{k}$ is the set of rooted plane trees with $k$ vertices, $\hat{\mathbf{t}}$ is the tree $\mathbf{t}_{n}^{(q)}$ without edge lengths and the $\ell_{i}$'s are the edge lengths. The space of trees with edge lengths is thus endowed with the natural product topology (i.e., $\mathbb{T}_{k}$ is equipped with the discrete topology and $\mathbb{R}^{k-1}_{+}$ with the usual one).
For $x_{1}, \dots, x_{q}$ i.i.d.\ random points of the Brownian CRT $\mathcal{T}_{\rm Br}$ sampled from its mass measure $\mu_{\rm Br}$, one can construct similarly a discrete tree with edge lengths $\mathcal{T}^{(q)}_{\rm Br}$; see \cite{AldousIII}. If $\lim_{n \rightarrow \infty} \sigma_{n}^{-1} \Delta_{n} = 0$, Marzouk \cite{Cyril2019} proved that, for every $q \geq 1$, one has the convergence in distribution,
\begin{eqnarray*}
\frac{\sigma_{n}}{V_{n}} \mathbf{t}_{n}^{(q)}\xrightarrow[ ]{d} \mathcal{T}^{(q)}_{\rm Br}, \hspace*{3mm} \text{as} \hspace*{2mm}  n \rightarrow \infty.
\end{eqnarray*}

In this work, we go one step further and, under the existence of at most countably many large degree vertices (see \ref{B2} and \ref{B}), we prove weak convergence of $\mathbf{t}_{n}$ toward the associated Inhomogeneous continuum random tree (Inhomogeneous CRT, which may be different from the Brownian CRT). The  Inhomogeneous CRT, introduced in \cite{AldousPitman1999} and \cite{Camarri2000}, arises as the scaling limit of another model of random trees called $\mathbf{p}$-trees (or birthday trees). The simplest description of the Inhomogeneous CRT is via a line-breaking construction based on a Poisson point process in the plane which can be found in \cite{AldousPitmanI2000,Camarri2000}. The spanning subtree description is set out in \cite{AldousPitman1999}, and its description via an exploration process is given in \cite{AldousMiermont2004}. An Inhomogeneous CRT $(\mathcal{T}_{\theta}, r_{\theta}, \rho_{\theta}, \mu_{\theta})$ is uniquely defined by a parameter set $\theta \coloneqq (\theta_{0}, \theta_{1}, \dots)$ such that
\begin{eqnarray*}
\theta_{1} \geq \theta_{2} \geq \cdots \geq 0, \hspace*{3mm} \theta_{0} \geq 0, \hspace*{3mm}  \sum_{i \geq 0} \theta_{i}^{2} = 1 \hspace*{3mm} \text{and either} \hspace*{3mm} \theta_{0} >0 \hspace*{3mm} \text{or} \hspace*{3mm} \sum_{i \geq 1} \theta_{i} = \infty; 
\end{eqnarray*}

\noindent see Figure \ref{fig:treelamin}, left for a simulation of an Inhomogeneous CRT. In the special case $\theta = (1,0,0, \dots)$, $\mathcal{T}_{\theta}$ is precisely the Brownian CRT. For $q \geq 1$, a discrete tree with edge lengths, $\mathcal{T}_\theta^{(q)}$, can be constructed from $q$ i.i.d.\ random points of $\mathcal{T}_{\theta}$ sampled from the mass measure $\mu_{\theta}$. Its distribution is described in \cite{AldousPitman1999, AldousPitmanI2000, Camarri2000}. 

\begin{theorem} \label{Theo2}
For $n \in \mathbb{N}$, let $\mathbf{s}_{n} = (N_{i}^{n}, i \geq 0)$ be a degree sequence. Let $(d^{n}(i), 1 \leq i \leq V_{n})$ denote the associated child sequence, obtained by writing $N_{0}^{n}$ zeros, $N_{1}^{n}$ ones, etc., and ordering the resulting sequence decreasingly. Assume that there exists a sequence $(b_{n}, n \geq 1)$ with $b_{n} \rightarrow \infty$ such that, as $n \rightarrow \infty$:
\begin{enumerate}[label=(\textbf{A.\arabic*})]
\item {\bf Size.} $V_{n} \rightarrow \infty$; \label{B1}
\item {\bf Hubs.} For all $i \geq 1$, the sequence $(d^{n}(i)/b_{n}, n \geq 1)$ converges to a limit $\theta_{i} \geq 0$; \label{B2}
\item {\bf Degree variance.}  $\frac{1}{b_{n}^{2}}\sum_{i \geq 0} (i-1)^{2}N_{i}^{n} \rightarrow 1$;
\label{B3}
\item $\theta_{0} = \sqrt{1- \sum_{i \geq 1} \theta_{i}^{2}}>0 \hspace*{3mm} \text{and} \hspace*{3mm} \sum_{i \geq 1}\theta_{i} < \infty$. \label{B}
\setcounter{Cond2}{\value{enumi}}
\end{enumerate}

\noindent Then, for all $q \geq 1$, we have that
\begin{eqnarray*}
\frac{b_{n}}{V_{n}} \mathbf{t}_{n}^{(q)} \xrightarrow[ ]{d} \mathcal{T}_{\theta}^{(q)}, \hspace*{3mm} \text{as} \hspace*{2mm}  n \rightarrow \infty,
\end{eqnarray*}
\noindent where $\mathcal{T}_{\theta}$ is an Inhomogeneous CRT with parameter set $\theta = (\theta_{0}, \theta_{1}, \dots)$ and the convergence is in distribution within the space of trees with edge lengths, equipped with the product topology.
\end{theorem}

By Fatou's lemma and \ref{B2}-\ref{B3}, we have that $\sum_{i \geq 1} \theta_{i}^{2} \leq 1$ and thus, $\theta_{0}$ in \ref{B} is well-defined. Observe also that the maximum degree $\Delta_{n}$ is equal to $d^{n}(1)$ and thus, \ref{B2} implies that $\Delta_{n}/b_{n} \rightarrow \theta_{1}$ as $n \rightarrow \infty$. In particular, if $\theta_{1} = 0$ in \ref{B2}, the hypotheses made in Theorem \ref{Theo2} correspond to the setting studied by Marzouk \cite{Cyril2019}. Indeed, by \ref{B3}, the global variance $\sigma_{n}^{2} = \sum_{i \geq 1} i (i-1) N_{i}^{n}$ of the degree sequence $\mathbf{s}_{n}$ satisfies that $\sigma^{2}_{n}/b_{n}^{2} \rightarrow 1$, as $n \rightarrow \infty$. On the other hand, Theorem \ref{Theo2} together with \cite[Theorem 5]{Greven2009} implies the convergence in distribution
\begin{eqnarray} \label{eq13}
\left(\mathbf{t}_{n},  \frac{b_{n}}{V_{n}} r_{n}^{\text{gr}}, \rho_{n}, \mu_{n} \right) \xrightarrow[ ]{d} (\mathcal{T}_{\theta}, r_{\theta}, \rho_{\theta}, \mu_{\theta}), \hspace*{3mm} \text{as} \hspace*{2mm}  n \rightarrow \infty,
\end{eqnarray}

\noindent for the so-called Gromov-weak topology (often cited as Gromov-Prohorov topology); see e.g.\ Section \ref{proofLaminationT} for background.

\begin{figure}[!htb]
\center
\includegraphics[scale=.3]{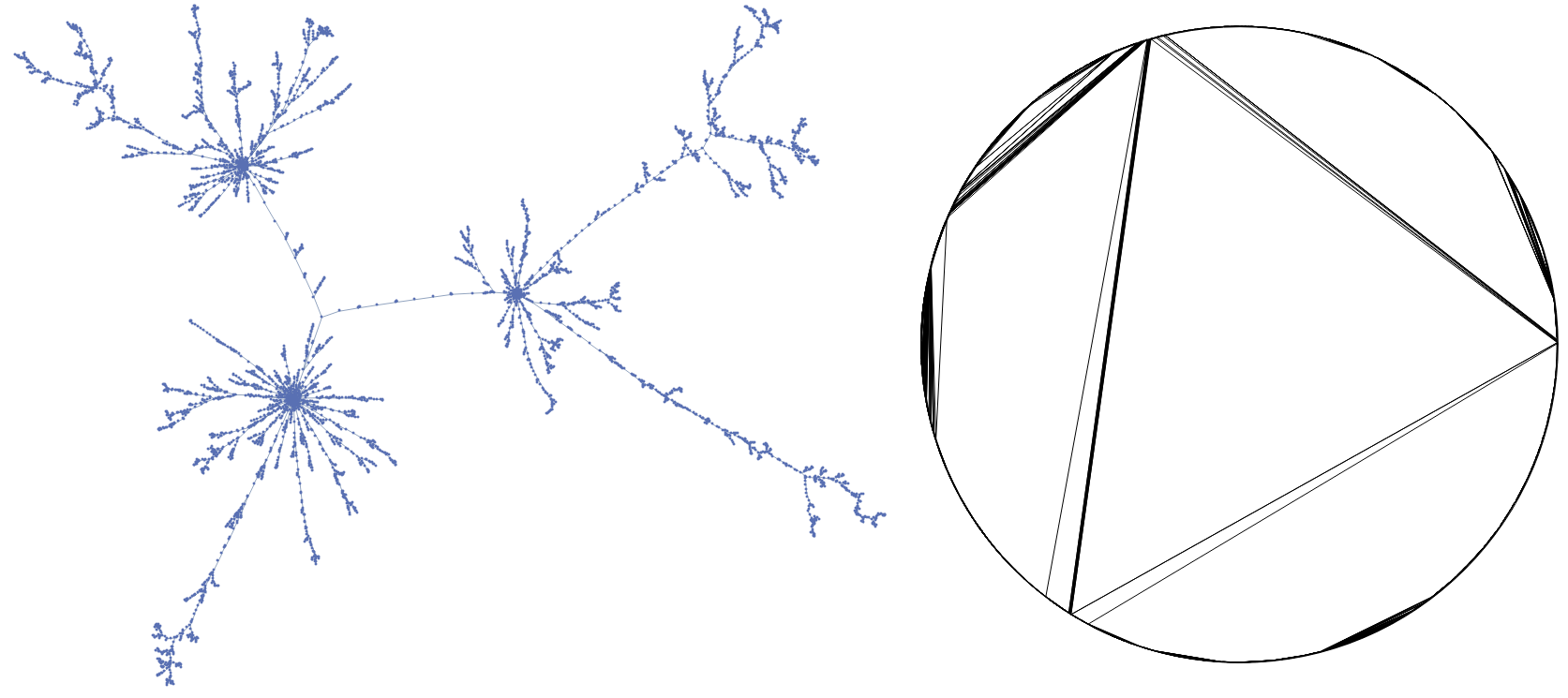}
\caption{Simulation of an Inhomogeneous CRT $\cT_\theta$ and its associated lamination $\bL(\cT_\theta)$, for $\theta = (1/\sqrt{7},2/\sqrt{7}, 1/\sqrt{7}, 1/\sqrt{7}, 0, \ldots)$.}
\label{fig:treelamin}
\end{figure}

In the regime of possibly countably many large degree vertices \ref{B2} and \ref{B}, Theorem \ref{Theo2} characterises the possible scaling limits of this model of random trees. It is then natural to wonder whether \eqref{eq13} can be reinforced to hold for the stronger Gromov-Hausdorff-Prohorov topology under the assumptions \ref{B1}-\ref{B}. To achieve this, one would need to prove the tightness of the sequence of discrete trees (see e.g. \cite[Equation 25]{AldousIII}), which requires precise estimates of the height of $\mathbf{t}_{n}$. However, as pointed out in \cite[Section 1.2]{Cyril2019}, there are cases where the maximal height of the tree can be much larger than $V_{n}/b_{n}$ and thus no general tightness result as in \cite{Broutin2014} holds.

Recently, Blanc-Renaudie \cite[Theorem 5 (a)-(b)]{Arthur2021} independently proved a general result (including the case $\sum_{i \geq 1} \theta_{i} = \infty$). Indeed, \cite[Theorem 6 (b)]{Arthur2021} shows that, under \cite[Assumption 2 (b)]{Arthur2021} (this assumption is similar to our conditions \ref{B1}-\ref{B3}), $\mathbf{t}_{n}$ converges, after normalization by $V_{n}/\sigma_{n}$, toward the Inhomogeneous CRT, for the Gromov-weak topology; see also the remarks after \cite[Theorem 6 (a)-(b)]{Arthur2021}. Nevertheless, our methods are completely different. In contrast to Blanc-Renaudie's recursive construction for TGDSs based on a modified Aldous-Broder algorithm, this work adopts a classical approach centred around the $\L$ukasiewicz path and its associated height process. The height process is a significant object of study in its own right and its analysis is central to the present work.

On the other hand, Blanc-Renaudie \cite[Theorem 7]{Arthur2021} implies, under additional tightness conditions (\cite[Assumption 7]{Arthur2021}), the convergence of $\mathbf{t}_{n}$ for the Gromov–Hausdorff–Prohorov topology. To be precise, Blanc-Renaudie provides under those conditions an upper bound for the height of $\mathbf{t}_{n}$, and uses similar estimates to control the Gromov-Hausdorff distance between $\mathbf{t}_{n}^{(q_{n})}$ and $\mathbf{t}_{n}$, for a well-chosen sequence $(q_{n}, n \geq 1)$. The height estimate in \cite{Arthur2021} is the ingredient we are missing to prove the tightness of the sequence of discrete trees (i.e., \cite[equation (25)]{AldousIII}), and thus, under some additional technical conditions, the convergence for the Gromov–Hausdorff–Prohorov topology.

A natural question would be to deduce an analogous result to Theorem \ref{Theo2} in the case $\sum_{i \geq 1} \theta_{i} = \infty$ by studying the asymptotic behaviour the height process of the discrete trees.
This broader setting, however, introduces additional technical complications. For instance, defining the height process of the Inhomogeneous CRT in this context remains an open problem.

Finally, we expect that similar results also hold for forests with given degree sequence. This has only been investigated under the assumption of no large degree vertices by Lei \cite{Lei2019} and Marzouk \cite{Cyril2019}. In particular, they view the forest as a single tree by attaching all the roots to an extra root vertex. In this framework, the limit is a different continuum tree that is encoded by a certain Brownian first-passage bridge. 

\subsection{Fragmentations and laminations} \label{SecFragLaMain}

Aldous, Evans and Pitman \cite{AldousPitman1998, EvansPitman1998, Pitman1999} initiated the study of fragmentation processes derived by deleting the edges of (random) trees one by one, uniformly at random. As time passes, the deletion of edges creates more and more connected components whose ordered sequence of sizes is called the fragmentation process of the tree. Aldous, Evans and Pitman studied the case of a uniform random tree with $n$ labelled vertices and showed that the associated fragmentation process, suitably rescaled, converges to the fragmentation process of the Brownian CRT as $n \rightarrow \infty$; see also \cite{Brou2016, MarckertWang2019}. This latter is connected to the standard additive coalescent via a deterministic time-change and it is constructed by cutting down the skeleton of the Brownian CRT in a Poisson manner. Aldous and Pitman \cite{AldousPitmanI2000} (see also \cite{EvansPitman1998}) established a similar result in the broader context of $\mathbf{p}$-trees. They showed that this fragmentation process converges after rescaling to the fragmentation process of the Inhomogeneous CRT. Recently, the first authors \cite{Berzunza2020} studied the case of critical Galton-Watson trees conditioned on having $n$ vertices, whose offspring distribution $\mu$ belongs to the domain of attraction of a stable law of index $\alpha \in (1,2]$ ($\alpha$-stable Galton-Watson trees). 
In this case, the limit is the fragmentation process of the so-called $\alpha$-stable L\'evy tree constructed by cutting down its skeleton in a Poisson manner.

It turns out that the fragmentation process of a tree can be coded by a non-decreasing process of subsets of the unit disk called laminations. A lamination is a closed subset of the closed unit disk $\bar{\mathbb{D}}$ made of the union of the unit circle $\mathbb{S}^{1}$ and a set of chords that do not intersect in the open unit disk $\mathbb{D}$. A face of a lamination $L$ is a connected component of the complement of $L$ in $\bar{\mathbb{D}}$. Laminations appear for instance in topology and hyperbolic geometry, see \cite{Bonahon2001} and references therein. We denote by $\bL(\bar{\mathbb{D}})$ the set of laminations of $\bar{\mathbb{D}}$ and equip it with the usual Hausdorff topology on the compact subsets of $\bar{\mathbb{D}}$. 

The idea of coding (random) trees by (random) laminations of $\bar{\mathbb{D}}$ goes back to Aldous \cite{AldousT1994, AldousTT1994} in his study of a uniform triangulation of a large polygon. Since then, laminations have appeared in different contexts, as limits of discrete structures \cite{Kor14, CK14, The19} or in the theory of random maps \cite{LGP08}. Roughly speaking, each chord of the lamination corresponds to an edge of the tree. Then, by adding chords one by one in the order in which the corresponding edges are removed, we code the fragmentation of the tree by a random process taking its values in $\bL(\bar{\mathbb{D}})$ (see Section \ref{sec:laminations} for a rigorous definition of this process). Furthermore, at any given time in the process, there is a one-to-one correspondence between faces of the lamination and connected components of the fragmented tree. In the case of $\alpha$-stable Galton-Watson trees, the third author proves in \cite{The19} the convergence of this lamination-valued process, toward a limiting process that can be constructed directly from the corresponding $\alpha$-stable L\'evy tree and encodes its fragmentation process.

A natural way of extending the previous investigations is to study the asymptotic behaviour of the fragmentation process and the lamination-valued process derived by cutting-down a rooted plane tree, and in particular a tree with given degree sequence. This is the second goal of this paper. To present the main result of this section, we need some notation and background that some readers may not be familiar with. We refer to Section \ref{sec:laminations} for proper definitions. For $\tau$ a finite rooted plane tree, let $(\bL_{t}(\tau), t \geq 0)$ be the lamination-valued process associated to the fragmentation of $\tau$ (that is, for all $t \geq 0$, $\bL_{t}(\tau)$ is obtained by removing the first $\lfloor t \rfloor \wedge (\zeta(\tau)-1)$ edges from $\tau$, where $\zeta(\tau)$ denotes the number of vertices of $\tau$, and drawing the corresponding chords in the disk); see Definition \ref{Def2}. For $I \subseteq \R_+$ a closed interval, let $\mathbf{D}(I, \mathbb{M})$ be the space of c\`adl\`ag functions (that is, right-continuous with left limits) from $I$ to a metric space $\mathbb{M}$. We equip $\mathbf{D}(I, \mathbb{M})$ with the $J_{1}$ Skorohod topology; see e.g.\  \cite[Section 5 in Chapter 3]{Ethier1986} or \cite[Chapter 3]{Billi1999} for details on this space. We denote by $\mathcal{T} = (\mathcal{T}, r, \rho, \mu)$ a plane continuum tree, that is, $(\mathcal{T}, r)$ is a metric space, $\rho$ is a distinguished element of $\mathcal{T}$ called the root, $\mu$ is a probability measure on the set of leaves of $\mathcal{T}$, and $\cT$ is endowed with a compatible total order; see Definition \ref{Def6}.
 As in the case of finite trees, it is possible to define from $\cT$ a lamination-valued process $(\bL_t(\cT), t \geq 0)$ obtained by cutting $\cT$ in a Poissonian way and associating to each cutpoint a chord in the disk; see Section \ref{sec:lamnoncompact}.

For $n \geq 1$, let $\tau_{n}$ be a (possibly random) rooted plane tree. We view it as a rooted metric measure space $(\tau_{n}, r_{n}^{\text{gr}}, \emptyset_{n}, \mu_{n})$, i.e., $\tau_{n}$ is identified as its set of vertices, $r_{n}^{\text{gr}}$ is the graph distance on $\tau_{n}$, $\emptyset_{n}$ is the root of $\tau_{n}$ and $\mu_{n}$ is the uniform measure on the set of vertices of $\tau_{n}$. In this work, whenever we consider a random rooted plane tree $\tau$, we always assume that the number of its vertices, $\zeta(\tau)$, is deterministic. Similarly, for a sequence of such trees  $(\tau_{n}, n\geq 1)$, we also assume that $\zeta(\tau_{n})$ is deterministic for all $n \geq 1$.

The following result, which in particular can be applied to trees with given degree sequence, states the equivalence of (a planar version of) the Gromov-weak convergence of a sequence of plane trees (see Definition \ref{def:planarGW}) and the convergence of its associated lamination-valued processes. Planar Gromov-weak convergence is weaker than the convergence used in Theorem \ref{Theo2} (i.e., of trees with edge lengths) because, in the former, branching points may merge at the limit.
\begin{theorem}
\label{thm:cvlamproc}
Let $(\tau_{n}, n \geq 1)$ be a sequence of random rooted plane trees, $\cT$ be a random plane continuum tree and $(a_{n}, n \geq 1)$ be a sequence of non-negative real numbers satisfying $a_{n} \rightarrow \infty$ and $\zeta(\tau_{n})/a_{n} \rightarrow \infty$, as $n \rightarrow \infty$. Then, the following assertions are equivalent:
\begin{enumerate}[label=(\textbf{C.\arabic*})]
\item 
\label{D1} 
$\displaystyle (\bL_{ta_{n}}(\tau_{n}), t \geq 0) \xrightarrow[ ]{d} (\bL_{t}(\mathcal{T}), t \geq 0), \hspace*{3mm} \text{as} \hspace*{2mm}  n \rightarrow \infty, \hspace*{2mm}  \text{in} \hspace*{2mm} \mathbf{D}(\mathbb{R}_{+}, \bL(\bar{\mathbb{D}}))$. 

\item $\displaystyle  \left(\tau_{n}, \frac{a_{n}}{\zeta(\tau_{n})} r_{n}^{\rm gr}, \emptyset_{n}, \mu_{n} \right) \rightarrow (\mathcal{T}, r, \rho, \mu)$, as $n \rightarrow \infty$, in the planar Gromov-weak sense. \label{D2}
\end{enumerate}
\end{theorem}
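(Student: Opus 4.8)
**The plan is to prove the equivalence by establishing a coupling between the lamination-valued process and the underlying metric measure structure, exploiting the fact that the lamination records, in a geometric way, the sizes of the components of the fragmented tree together with the combinatorial nesting structure of cutpoints.**

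First I would recall the structure of the lamination-valued process. At each time $t$, the lamination $\bL_{ta_n}(\tau_n)$ consists of $\lfloor ta_n\rfloor \wedge (\zeta(\tau_n)-1)$ chords, one per deleted edge, and its faces are in bijection with the connected components of $\tau_n$ after removing those edges; the mass of a face (its normalised circle-perimeter) equals the rescaled size of the corresponding component. The direction $(\ref{D2}) \Rightarrow (\ref{D1})$ is the one that most closely mirrors existing results (e.g.\ \cite{The19}): from Gromov-weak convergence one extracts, via sampling finitely many $\mu_n$-points, convergence of the reduced trees with edge-lengths, and the Poissonian cutting procedure on these reduced trees converges to the Poissonian cutting of $\cT$. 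One then argues that the lamination is a continuous functional of this data: knowing the reduced tree spanned by $k$ uniform leaves together with the order and location of the cuts determines, in the limit, larger and larger faces of the lamination, and the remaining faces have vanishing mass as $k\to\infty$. Tightness in $\mathbf{D}(\R_+,\bL(\bar\D))$ follows because the lamination is monotone (chords are only added), so one controls modulus of continuity via the sizes of the largest faces that are split at a given cut, which are themselves controlled by the reduced-tree convergence; the Poisson structure rules out two large faces being split at the same instant in the limit.

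For the converse $(\ref{D1}) \Rightarrow (\ref{D2})$, the key observation is that the lamination-valued process determines the Gromov-weak limit. Concretely, from $(\bL_t(\cT),t\ge0)$ one can reconstruct the mass measure and the metric: sampling $k$ independent uniform chords (equivalently, looking at the $k$ first cuts under a Poisson clock run at an appropriate rate and tracking which faces they separate, with masses) recovers the law of the reduced tree $\cT^{(k)}$ with its edge-lengths, because the time at which two marked leaves are first separated by a chord is an exponential variable whose rate equals the $r$-length of the branch joining them in $\cT$, and the face-masses at that time give the masses of the two subtrees. Hence convergence of the lamination-valued processes forces convergence of all reduced trees with edge-lengths and the attached mass split, which by the characterisation of the Gromov-weak topology via finite samples (see Section~\ref{proofLaminationT}) is exactly $(\ref{D2})$. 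Here one must be careful that the discrete cutting is done by removing edges one at a time rather than under an independent Poisson clock, so I would first Poissonise: replace the deterministic schedule $\lfloor ta_n\rfloor$ by cuts at the jump times of a rate-$a_n$ Poisson process, check that this changes the process negligibly in the $J_1$ topology (a law of large numbers for the Poisson process), and then read off exponential separation times.

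The main obstacle I expect is the tightness and the continuity of the lamination functional at the \emph{process} level in the $J_1$ Skorohod topology on $\mathbf{D}(\R_+,\bL(\bar\D))$: a priori, small faces in the discrete lamination accumulate and a naive bound on the Hausdorff distance does not see them, so one needs a uniform (in $n$ and in $t$ over compacts) estimate showing that, with high probability, all but finitely many faces have mass below $\varepsilon$ and diameter of the corresponding chords below $\varepsilon$ — this is where the hypothesis $\zeta(\tau_n)/a_n\to\infty$ enters, guaranteeing that the cutting has not yet fragmented the tree into a dust of tiny pieces at any fixed macroscopic time, so that the structure is genuinely governed by the reduced trees. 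Handling the exact correspondence between the $J_1$ jumps of the lamination process (a jump occurs precisely when a face is split) and the Gromov-weak data, and ensuring no pathological simultaneous splittings survive in the limit, is the technical heart of the argument; everything else is a matter of assembling the reduced-tree convergence of \cite{Cyril2019}-type statements with the Poissonian cutting construction of Definition~\ref{Def7}.
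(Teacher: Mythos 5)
Your proposal follows essentially the same route as the paper: both directions pass through reduced trees spanned by finitely many random leaves (identified with uniform points on the circle), Poissonisation of the discrete cutting schedule to replace integer times by exponential clocks, and the key observation that separation times for marked points are exponentials whose rates recover the edge-lengths of the reduced tree, hence the Gromov-weak data. The paper makes this precise via an auxiliary theory of reduced laminations (Definitions~\ref{Def4}--\ref{Def9}), the combinatorial Lemmas~\ref{lem:distances} and~\ref{lem:discretedistances} characterising which subsets of marked leaves can be separated by a single cut and showing the separation time is exponential with parameter equal to the corresponding reduced-tree edge-length, and Lemma~\ref{lem:repartitionunifcircle} to control the small faces in the tightness/proximity estimates; these are exactly the technical steps your sketch identifies as the crux (tightness in $J_1$, negligibility of small faces, reading off separation times after Poissonisation), so the proposal is correct in outline and matches the paper's argument.
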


To prove Theorem \ref{thm:cvlamproc}, we develop a general approach based on the notion of reduced laminations that may be of independent interest. These reduced laminations are constructed considering reduced trees obtained by sampling only a finite number of vertices in the tree. In particular, the more vertices are sampled, the closer one is to the lamination-valued process associated with the entire tree.

Theorem \ref{thm:cvlamproc} and Theorem \ref{Theo7} (in Section \ref{ModyF}) together imply the following result regarding the lamination-valued process associated with the fragmentation of a tree with a given degree sequence $\mathbf{t}_{n}$. Specifically, Theorem \ref{Theo7} establishes the convergence of $(\mathbf{t}_{n},  (b_{n}/V_{n}) r_{n}^{\text{gr}}, \rho_{n}, \mu_{n})$ towards the Inhomogeneous CRT $\mathcal{T}_{\theta}$ in the planar Gromov-weak sense (Definition \ref{def:planarGW}). In contrast to Theorem \ref{Theo2}, Theorem \ref{Theo7} takes the order into account.

\begin{corollary} \label{thm:mainlamresultintro}
Suppose that $\mathbf{s}_{n}$ satisfies \ref{B1}-\ref{B}. Let $\mathcal{T}_{\theta}$ be an Inhomogeneous CRT with parameter set $\theta = (\theta_{0}, \theta_{1}, \dots)$. Then, jointly with the convergence of Theorem \ref{Theo2}, we have that
\begin{align*}
(\bL_{tb_{n}}(\mathbf{t}_{n}), t \geq 0) \xrightarrow[ ]{d} (\bL_{t}(\mathcal{T}_{\theta}), t \geq 0), \hspace*{3mm} \text{as} \hspace*{2mm}  n \rightarrow \infty, \hspace*{2mm} \text{in} \hspace*{2mm} \mathbf{D}(\mathbb{R}_{+}, \bL(\bar{\mathbb{D}})).
\end{align*}
\end{corollary}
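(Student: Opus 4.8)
The plan is to deduce Corollary~\ref{thm:mainlamresultintro} as an essentially immediate consequence of Theorem~\ref{Theo2} and of the implication \ref{D2}$\Rightarrow$\ref{D1} in Theorem~\ref{thm:cvlamproc}, applied with $\tau_{n} = \mathbf{t}_{n}$, $a_{n} = b_{n}$ (so $\zeta(\tau_{n}) = V_{n}$ and the root $\emptyset_{n} = \rho_{n}$) and $\mathcal{T} = \mathcal{T}_{\theta}$. Only two points genuinely need attention: first, that the standing hypotheses of Theorem~\ref{thm:cvlamproc} hold, namely $b_{n} \to \infty$ (clear from \ref{B2}) and $\zeta(\mathbf{t}_{n})/b_{n} = V_{n}/b_{n} \to \infty$; and second, that the resulting convergence holds \emph{jointly} with that of Theorem~\ref{Theo2}.

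First I would check the growth condition $V_{n}/b_{n} \to \infty$, which is not among \ref{B1}--\ref{B3} and (\ref{B}) and requires a short argument using $\sigma^{2}>0$. Recall that $\sum_{i \geq 0}(i-1)^{2}N_{i}^{n} = \sum_{i=1}^{V_{n}}(d^{n}(i)-1)^{2}$, and that in a plane tree $E_{n} = V_{n}-1$, so that $\sum_{i\geq 0}(i-1)^{2}N_{i}^{n} = \sigma_{n}^{2}+1$; thus \ref{B3} and (\ref{B}) already give $\sigma_{n}^{2}/b_{n}^{2}\to 1$. Now fix $\eta>0$ and let $K=K(\eta)$ be the (finite, since $\sum_{i\geq 1}\beta_{i}<\infty$) number of indices $i$ with $\beta_{i}>\eta$. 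By \ref{B2} we have $d^{n}(i)/b_{n}\to\beta_{i}$ for each $i\leq K$, hence $b_{n}^{-2}\sum_{i\leq K}(d^{n}(i)-1)^{2}\to\sum_{j\leq K}\beta_{j}^{2}$, while \ref{B3} gives $b_{n}^{-2}\sum_{i\geq 1}(d^{n}(i)-1)^{2}\to\sigma^{2}+\sum_{j\geq 1}\beta_{j}^{2}$; subtracting, $b_{n}^{-2}\sum_{i>K}(d^{n}(i)-1)^{2}\to\sigma^{2}+\sum_{j>K}\beta_{j}^{2}\geq\sigma^{2}$. On the other hand, since $d^{n}(K+1)/b_{n}\to\beta_{K+1}\leq\eta$, for $n$ large every $i>K$ has $d^{n}(i)\leq 2\eta b_{n}$, whence
\[
\sum_{i>K}(d^{n}(i)-1)^{2}\;\leq\;2\eta b_{n}\sum_{i>K}\bigl(d^{n}(i)+1\bigr)\;\leq\;2\eta b_{n}(E_{n}+V_{n})\;\leq\;4\eta b_{n}V_{n}.
\]
Combining the last two facts gives $(\sigma^{2}-o(1))b_{n}^{2}\leq 4\eta b_{n}V_{n}$, i.e.\ $\liminf_{n}V_{n}/b_{n}\geq\sigma^{2}/(4\eta)$; letting $\eta\downarrow 0$ and using $\sigma^{2}>0$ yields $V_{n}/b_{n}\to\infty$, as required.

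It then remains to match assertion \ref{D2} with what the paper has already established. Theorem~\ref{Theo2} gives $\frac{b_{n}}{V_{n}}\mathbf{t}_{n}^{(q)}\xrightarrow{d}\mathcal{T}_{\theta}^{(q)}$ for every $q\geq 1$, i.e.\ convergence of the reduced trees spanned by the root and $q$ i.i.d.\ uniform vertices; by \cite[Theorem~5]{Greven2009} this is equivalent to the Gromov-weak convergence \eqref{eq13}, which is precisely \ref{D2} with $\tau_{n}=\mathbf{t}_{n}$, $a_{n}=b_{n}$, $\mathcal{T}=\mathcal{T}_{\theta}$. Theorem~\ref{thm:cvlamproc} then delivers \ref{D1}, namely $(\bL_{tb_{n}}(\mathbf{t}_{n}),t\geq 0)\xrightarrow{d}(\bL_{t}(\mathcal{T}_{\theta}),t\geq 0)$ in $\mathbf{D}(\mathbb{R}_{+},\bL(\bar{\mathbb{D}}))$. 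For the \emph{joint} statement, I would invoke that the implication \ref{D2}$\Rightarrow$\ref{D1} is itself proved through reduced laminations built from the reduced trees $\mathbf{t}_{n}^{(q)}$ (the more vertices sampled, the closer one is to the full lamination-valued process): the coupling realising the convergence of the lamination processes simultaneously carries the convergence of each $\frac{b_{n}}{V_{n}}\mathbf{t}_{n}^{(q)}$, so the joint convergence with Theorem~\ref{Theo2} is automatic; equivalently, one may run the argument on a Skorokhod representation of \eqref{eq13}. The only real obstacles are therefore the two auxiliary points above --- the growth estimate $V_{n}/b_{n}\to\infty$ and keeping the joint coupling --- everything else being a direct combination of Theorems~\ref{Theo2} and~\ref{thm:cvlamproc}.
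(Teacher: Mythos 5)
Your proof is correct and follows essentially the same route as the paper: apply Theorem~\ref{thm:cvlamproc} (the implication \ref{D2}$\Rightarrow$\ref{D1}) with $\tau_{n}=\mathbf{t}_{n}$, $a_{n}=b_{n}$, $\mathcal{T}=\mathcal{T}_{\theta}$, using that Theorem~\ref{Theo2} together with \cite[Theorem~5]{Greven2009} yields the Gromov-weak convergence \eqref{eq13}, i.e.\ hypothesis~\ref{D2}. You also correctly identify the only two points that need checking (the growth condition $V_{n}/b_{n}\to\infty$ and the jointness of the convergence) and handle both adequately; in particular your observation that the proof of \ref{D2}$\Rightarrow$\ref{D1} builds the reduced laminations on a Skorokhod representation of \eqref{eq13}, so that joint convergence is automatic, is precisely the right justification.

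The one place your write-up diverges from the paper is the verification of $V_{n}/b_{n}\to\infty$: you give a fresh direct argument (fixing $\eta>0$, truncating the macroscopic degrees above level $\eta$, and bounding $\sum_{i>K}(d^{n}(i)-1)^{2}\leq 4\eta b_{n}V_{n}$ to get $\liminf V_{n}/b_{n}\geq\sigma^{2}/(4\eta)$, then sending $\eta\downarrow 0$). This is correct and slightly more quantitative than necessary, but note that the paper already proves exactly this fact as Lemma~\ref{lemma1}, which holds under \ref{B1}--\ref{A4}; since (\ref{B}) includes $\sigma^{2}>0$, condition \ref{A4} is automatic here, so you could simply have cited Lemma~\ref{lemma1}. (The paper's proof of Lemma~\ref{lemma1} proceeds by contradiction — assuming $V_{n}<Cb_{n}$ along a subsequence and controlling $b_{n}^{-2}\sum_{i}(i-1)^{2}N_{i}^{n}$ by splitting degrees at threshold $\varepsilon b_{n}$ — whereas yours is a direct lower bound; both work under (\ref{B}), and the paper's version additionally covers the case $\sum_{i}\beta_{i}=\infty$, which is not needed for this corollary.) Apart from this small duplication, the proposal matches the paper's argument.
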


In Figure \ref{fig:treelamin}, one can see a simulation of $\bL_\infty(\cT_\theta) \coloneqq \overline{\lim_{t \rightarrow \infty} \bL_t(\cT_\theta)}$, for a given parameter set $\theta$. Theorem \ref{thm:cvlamproc} (or Corollary \ref{thm:mainlamresultintro}) does not directly imply the convergence, after proper rescaling, of the fragmentation process associated with a tree $\mathbf{t}_{n}$ having a given degree sequence. However, this convergence can be established independently, without relying on the scaling limit of $\mathbf{t}_{n}$. The precise statement (Theorem \ref{Theo1}) is deferred to Section \ref{sec:fragmentation}. We highlight here that the limiting process, say $(\mathbf{F}(t), t \geq 0)$, of this rescaled fragmentation process corresponds exactly to Bertoin's construction \cite{Bertoin2000, Bertoin2001} using partitions of the unit interval induced by specific bridges with exchangeable increments (see Section \ref{sec:fragmentation}). Furthermore, $(\mathbf{F}(t), t \geq 0)$ coincides with the fragmentation process $(\mathbf{F}_{\cT_\theta}(t), t \geq 0)$ of the Inhomogeneous CRT $\cT_\theta$, with parameter set $\theta = (\theta_{0}, \theta_{1}, \dots)$, as shown in \cite{AldousPitmanI2000}. 

On the other hand, one can readily observe that the rescaled size of a component in the fragmentation process of $\mathbf{t}_{n}$ corresponds to the mass of the associated face in the lamination process, defined as $(2\pi)^{-1}$ times the fraction of its perimeter lying on the unit circle. Then, as a consequence of Theorem \ref{Theo1} below, we can relate the sequence of masses of the faces of $(\bL_{t}(\mathcal{T}_{\theta}), t \geq 0)$ with the fragmentation process $(\mathbf{F}(t), t \geq 0)$. For any lamination $L \in \mathbb{L}(\bar{\mathbb{D}})$, let $\mathbb{M}{\rm ass}[L]$ denote the sequence of the masses of its faces, sorted in non-increasing order. Consider also the infinite ordered set
\begin{eqnarray*}
\boldsymbol{\Delta} \coloneqq \Big \{ \mathbf{x} = (x_{1}, x_{2}, \dots): x_{1} \geq x_{2} \geq \cdots \geq 0 \hspace*{2mm} \text{and} \hspace*{2mm} \sum_{i=1}^{\infty} x_{i} < \infty \Big \}.
\end{eqnarray*}
\noindent We equip $\boldsymbol{\Delta}$ with the $\ell^{1}$-norm, $\Vert \mathbf{x} \Vert_{1} = \sum_{i=1}^{\infty} |x_{i}|$ for $\mathbf{x} \in \boldsymbol{\Delta}$. 

\begin{corollary}
\label{cor:masses}
Suppose that $\mathbf{s}_{n}$ satisfies \ref{B1}-\ref{B3} and 
\begin{enumerate}[label=(\textbf{A.\arabic*})]
\setcounter{enumi}{\value{Cond2}}
\item  either $\theta_{0} := \sqrt{1-\sum_{i \geq 1}\theta_{i}^{2}} > 0$ or $\sum_{i \geq 1}\theta_{i} = \infty$. \label{A4}
\end{enumerate}
\noindent Then,
\begin{align*}
(\mathbb{M}{\rm ass}[\bL_{tb_{n}}(\mathbf{t}_{n})], t \geq 0) \xrightarrow[ ]{d} \left(\mathbf{F}(t), t \geq 0 \right), \hspace*{3mm} \text{as} \hspace*{2mm}  n \rightarrow \infty, \hspace*{2mm} \text{in} \hspace*{2mm} \mathbf{D}(\mathbb{R}_{+}, \boldsymbol{\Delta}).
\end{align*}
\noindent Furthermore, the limiting process satisfies that, almost surely, $\left(\mathbf{F}_{\cT_\theta}(t), t \geq 0 \right) = (\mathbb{M}{\rm ass}[\bL_{t}(\mathcal{T}_{\theta})], t \geq 0)$.
\end{corollary}

Note that there is a priori no connection between the convergence of the fragmentation process in Corollary \ref{cor:masses} and the convergence of the lamination-valued process in Corollary \ref{thm:mainlamresultintro}. Roughly speaking, Corollary \ref{cor:masses} controls how small edges disappear at the limit, along the convergence of the lamination-valued processes in Corollary \ref{thm:mainlamresultintro}.

Let us finish with a remark on the assumptions that we make on the degree sequences. The hypotheses size \ref{B1}, hubs \ref{B2}, degree variance \ref{B3} and unbounded variation \ref{A4} are exactly those made in \cite{Angtuncio2020} to study the profile of a TGDS. They are necessary to apply the characterization and convergence results for exchangeable increments processes of \cite{Kallenberg1973} that are crucial to understand the shape of a TGDS via its $\L$ukasiewicz path. 

\subsection{Organization of the paper}

In Section \ref{sec:treesencoding}, we first recall the definition of rooted plane trees and their encoding by paths. In Section \ref{sec:convergence}, we prove Theorem \ref{Theo2} by studying the behaviour of a modified version of the $\L$ukasiewicz paths associated with trees with given degree sequence. Section \ref{sec:laminations} is devoted to the study of lamination-valued processes of rooted plane trees and plane continuum trees; we prove in particular Theorem \ref{thm:cvlamproc}. 
Finally, in Section \ref{sec:fragmentation}, we prove Theorem \ref{Theo1}, showing the convergence of the associated fragmentation processes, as well as Corollary \ref{cor:masses}.

\paragraph{Notation.}  
For $I=[a,b]$ for some $a<b$, or $I=[a,+\infty)$ for some $a \in \R_+$, for $f \in \mathbf{D}(I, \mathbb{R})$, we denote by $f(t)$ the value of $f$ at $t \in I$, by $f(t-)$ its left-hand limit at time $t$ (with the convention $f(a-) = f(a)$) and by $\Delta f(t) = f(t) - f(t-)$ the size of the jump (if any) at $t$. 

We write $\xrightarrow[ ]{d}$, $\xrightarrow[ ]{\mathbb{P}}$ and $\xrightarrow[ ]{a.s.}$ to denote convergence in distribution, probability and almost surely, respectively.

\section{Plane trees and their encoding paths}
\label{sec:treesencoding}

We provide here some background on finite rooted trees and recall how they can be coded by different integer-valued paths.

Following Neveu's formalism \cite{Neveu1986}, let $\mathbb{N} = \{1, 2, \dots \}$ be the set of positive integers and consider the set of labels $\mathbb{U} = \bigcup_{n \geq 0} \mathbb{N}^{n}$ (with the convention that $\mathbb{N}^{0} = \{ \emptyset \}$). An element $u \in \mathbb{U}$ is a sequence $u = (u_{1}, \dots, u_{n})$ of positive integers. If $v = (v_{1}, \dots, v_{m}) \in \mathbb{U}$, we let $uv = (u_{1}, \dots, u_{n}, v_{1}, \dots, v_{m}) \in \mathbb{U}$ be the concatenation of $u$ and $v$. By a slight abuse of notation, if $z \in \mathbb{N}$, we let $u z= (u_{1}, \dots, u_{n}, z)$. A rooted plane tree is a non-empty, finite subset $\tau \subset \mathbb{U}$, whose elements are called vertices, such that: (i) $\emptyset \in \tau$; (ii) if $v \in \tau$ and $v = u z$ for some $z \in \mathbb{N}$, then $u \in \tau$; (iii) if $u \in \tau$, then there exists an integer $k_{u} \geq 0$ such that $u i \in \tau$ if and only if $1 \leq i \leq k_{u}$. We view each vertex of $\tau$ as an individual of a population whose genealogical tree is $\tau$. The vertex $\emptyset$ is called the root of the tree. For every $u  = (u_{1}, \dots, u_{n}) \in \tau$, the vertex $pr(u) = (u_{1}, \dots, u_{n-1})$ is its parent, $k_{u}$ represents the number of children of $u$ (if $k_{u}=0$, then $u$ is called a leaf, otherwise, $u$ is called an internal vertex), and $|u| = n$ represents the length (or generation, or height) of $u$. We let $\chi_{u} \in \{1, \dots, k_{pr(u)} \}$ be the only index such that $u=pr(u) \chi_{u}$, which is the relative position of $u$ amongst its siblings. For two vertices $u, v \in \tau$, denote by $u \wedge v$ the first (highest) common ancestor of $u$ and $v$. The total progeny (or size) of $\tau$ will be denoted by $\zeta(\tau) = \text{Card}(\tau)$ (i.e., the number of vertices of $\tau$). In the following, by tree, we will always mean a finite rooted plane tree and we denote the set of all trees by $\mathbb{T}$. By a slight abuse, we consider a tree $\tau$ as a metric space, by drawing an edge of length $1$ between each non-root vertex $u$ and its parent $pr(u)$.

For $u, v \in \tau$, we denote by $\llbracket u, v \rrbracket$ the unique geodesic path between $u$ and $v$ in $\tau$, and $\llbracket u, v \llbracket = \llbracket u, v \rrbracket \backslash \{v\}$. In particular, we write $\llbracket \emptyset, u \llbracket$ for the ancestral line of $u$. For $u \in \tau$, let us denote by ${\rm L}(u)$ and ${\rm R}(u)$ respectively the number of vertices whose parent is a strict ancestor of $u$ and which lie strictly to the left, respectively to the right, of the ancestral line $\llbracket \emptyset, u \llbracket$. To be precise, for some $n \in \mathbb{N}$, let $u = (u_{1}, \dots, u_{n}) \in \tau$ where $u_{i} \in \mathbb{N}$, for $1 \leq i \leq n$. For $1 \leq i \leq n$, let $v_{(n)} = u$ and $v_{(n-j)} = pr(v_{(n+1-j)}) = (u_{1}, \dots, u_{n-j})$, for $1 \leq j  \leq n$ (with the convention $v_{(0)} = \emptyset$, the root of $\tau$.). So,  $\llbracket \emptyset, u \llbracket = \{v_{(i)}: 0 \leq i < n\}$,
\begin{align} \label{rightEq1}
{\rm L}(u) = \sum_{i=0}^{n-1} (u_{i+1}-1) \quad \text{and} \quad {\rm R}(u) = \sum_{i=0}^{n-1} (k_{v_{(i)}} - u_{i+1}).
\end{align}
\noindent Then, we set ${\rm LR}(u) = {\rm L}(u) + {\rm R}(u)$, the total number of individuals branching-off the ancestral line of $u$.

We will use three different orderings of the vertices of a tree $\tau \in \mathbb{T}$:
\begin{itemize}
\item[(i)]\textbf{Lexicographical ordering.} Given $v,w \in \tau$, we write $v \prec_{ \text{lex}} w$ if there exists $u \in \tau$ such that $v = u(v_{1}, \dots, v_{n})$, $w = u(w_{1}, \dots, w_{m})$ and $v_{1} < w_{1}$. 

\item[(ii)] \textbf{Reverse-lexicographical ordering.} Given $v,w \in \tau$, we write $v \prec_{\text{rev}} w$ if $w \prec_{\text{lex}} v$.

\item[(iii)] \textbf{Prim ordering.} Let $\textbf{edge}(\tau)$ be the set of edges of $\tau$ and consider a sequence of distinct and positive weights $\mathbf{w} = (w_{e}: e \in \textbf{edge}(\tau))$ (i.e., each edge $e$ of $\tau$ is marked with a different and positive weight $w_{e}$). Given two distinct vertices $u,v \in \tau$, we write $\{u, v\}$ for the edge connecting $u$ and $v$ in $\tau$ - if it exists. Let us describe the Prim order $\prec_{\text{prim}}$ of the vertices in $\tau$, that is, $\emptyset = u(0) \prec_{\text{prim}}  u(1) \prec_{\text{prim}} \dots  \prec_{\text{prim}} u(\zeta(\tau)-1)$. First set $u(0) = \emptyset$ and $D_{1} = \{u(0)\}$. Suppose that for some $1 \leq i \leq \zeta(\tau)-1$, the vertices $u(0), \dots, u(i-1)$ have been defined. We will use the notation $D_{i}$ for the set $\{u(0), \dots, u(i-1)\}$, for $0 \leq i \leq \zeta(\tau)-1$. Consider the minimum of the set of weights $\{w_{\{u,v\}}: u \in D_{i}, v \not \in D_{i}\}$ of the edges between a vertex of $D_{i}$ and another outside of $D_{i}$. Since all the weights are distinct, this minimum is reached at a unique edge $\{\tilde{u}, \tilde{v} \}$ where $\tilde{u} \in D_{i}$ and $\tilde{v} \not \in D_{i}$. Then set $u(i) = \tilde{v}$. This iterative procedure completely determines the Prim order $\prec_{\text{prim}}$.
\end{itemize}

\paragraph{$\L$ukasiewicz path, reverse-$\L$ukasiewicz path and Prim path.} Fix $\tau \in \mathbb{T}$, and for $\ast \in \{{\rm lex}, {\rm rev}, {\rm prim} \}$, associate to the ordering $\emptyset = u(0) \prec_{\ast} u(1) \prec_{\ast} \dots \prec_{\ast} u(\zeta(\tau)-1)$ of its vertices a path $W_{\tau}^{\ast}= (W_{\tau}^{\ast}(i), 0 \leq i \leq \zeta(\tau))$, by letting $W_{\tau}^{\ast}(0) = 0$ and for $0 \leq i \leq \zeta(\tau)-1$, $W_{\tau}^{\ast}(i+1) = W_{\tau}^{\ast}(i)  + k_{u(i)}-1$. Observe that  $W_{\tau}^{\ast}(i+1) - W_{\tau}^{\ast}(i) = k_{u(i)} -1 \geq -1$ for every $0 \leq i \leq \zeta(\tau)-1$, with equality if and only if $u(i)$ is a leaf of $\tau$. Note also that $W_{\tau}^{\ast}(i) \geq 0$ for every $0 \leq i \leq \zeta(\tau)-1$, and $W_{\tau}^{\ast}(\zeta(\tau)) = -1$. We shall think of such a path as the step function on $[0,\zeta(\tau)]$ given by $s \mapsto W_{\tau}^{\ast}(\lfloor s \rfloor)$. The path $W^{\text{lex}}_{\tau}$ is usually called the $\L$ukasiewicz path of $\tau$ and we will refer to $W^{\text{rev}}_{\tau}$ and $W^{\text{prim}}_{\tau}$ as the reverse-$\L$ukasiewicz path and the Prim path of $\tau$, respectively. 

In particular, if $\emptyset = u(0) \prec_{ \text{lex}}  u(1) \prec_{ \text{lex}}  \dots \prec_{ \text{lex}}  u(\zeta(\tau) -1)$ is the sequence of vertices of $\tau$ in lexicographical order, then 
\begin{eqnarray} \label{eq10}
{\rm R}(u(i)) = W^{\text{lex}}_{\tau}(i), \hspace*{4mm} \text{for} \hspace*{2mm} 0 \leq i \leq \zeta(\tau) -1. 
\end{eqnarray}

\noindent Similarly, if $\emptyset = u(0) \prec_{ \text{rev}}  u(1) \prec_{ \text{rev}}  \dots \prec_{ \text{rev}}  u(\zeta(\tau) -1)$ are listed in reverse-lexicographical order, then 
\begin{eqnarray} \label{eq19}
{\rm L}(u(i)) = W^{\text{rev}}_{\tau}(i), \hspace*{4mm} \text{for} \hspace*{2mm} 0 \leq i \leq \zeta(\tau) -1. 
\end{eqnarray}

\paragraph{Height process.} Let $\emptyset = u(0) \prec_{ \text{lex}}  u(1) \prec_{ \text{lex}}  \dots \prec_{ \text{lex}}  u(\zeta(\tau) -1)$ be the sequence of vertices of $\tau \in \mathbb{T}$ in lexicographical order. The height process $H_{\tau}= (H_{\tau}(i): 0 \leq i \leq \zeta(\tau))$ of $\tau$ is defined by letting $H_{\tau}(i) = |u(i)|$, for every $i \in \{0, \dots, \zeta(\tau) -1 \}$, and $H_{\tau}(\zeta(\tau)) = 0$. We sometimes think of $H_{\tau}$ as a continuous function on $[0,\zeta(\tau)]$, obtained by linear interpolation.

\paragraph{Contour function.} The contour function $C_{\tau}= (C_{\tau}(s), s \in [0, 2\zeta(\tau)])$ of $\tau \in \mathbb{T}$ is defined as follows. We see $\tau$ embedded in the oriented half-plane, with each edge having length one. We then imagine a particle exploring continuously all edges of $\tau$ from left to right at unit speed, going backwards when it reaches a leaf. For all $s \in [0,2\zeta(\tau)-2]$, let $C_{\tau}(s)$ be the distance from the particle to the root $\emptyset$ at time $s$. By convention, we set $C_{\tau}(s)=0$ for $s \in [2\zeta(\tau)-2, 2\zeta(\tau)]$. In particular, $C_{\tau}$ is continuous and $C_{\tau}(0)=C_{\tau}(2\zeta(\tau))=0$.

\section{Convergence of the trees with given degree sequences}
\label{sec:convergence}

In this section, we prove Theorem \ref{Theo2} which states the convergence of trees with a given degree sequence toward the Inhomogeneous CRT. In Section \ref{ExploICRT}, we first recall the definition of the exploration process that encodes the Inhomogeneous CRT. In Section \ref{ModyF}, we introduce the discrete version of the above exploration process, the so-called modified $\L$ukasiewicz path, which encodes a TGDS. We then prove that this modified $\L$ukasiewicz path, suitably rescaled, converges to the exploration process of the Inhomogeneous CRT, which implies Theorem \ref{Theo2}. Finally, in Section \ref{ConvGHP}, we consider a specific case in which we can prove the convergence of the trees for the Gromov-Hausdorff-Prohorov topology.

\subsection{The exploration process of the Inhomogeneous CRT} \label{ExploICRT}
Let us start with some definitions. A bridge with exchangeable increments (abridged EI bridge) is a continuous-time stochastic process $X^{\rm bg} = (X^{\rm bg}(t), t \in [0,1])$ with paths in $\mathbf{D}([0,1], \mathbb{R})$, of the form
\begin{eqnarray*}
X^{\rm bg}(t) = \theta_{0} B^{\rm bg}(t) + \sum_{i=1}^{\infty} \theta_{i} (\mathbf{1}_{\{ U_{i} \leq t\}} -t), \hspace*{4mm} t \in [0,1], 
\end{eqnarray*}

\noindent where $B^{\rm bg}= (B^{\rm bg}(t), t \in [0,1])$ is a Brownian bridge on $[0,1]$, $(U_{i}, i \geq 1)$ are i.i.d.\ random variables with uniform law on $[0,1]$ independent of $B^{\rm bg}$, and $\theta_{0} \in \mathbb{R}_{+}$, $\theta_{1} \geq \theta_{2} \geq \cdots \geq 0$ are constants such that $\sum_{i \geq 0} \theta_{i}^{2} < \infty$. We say that $X^{\rm bg}$ is an EI bridge with parameters $(\theta_{i}, i \geq 0)$; see e.g. \cite[Theorem 16.21]{Kall2005}. 

The so-called Vervaat transform (or Vervaat excursion) was introduced by Tak\'acs \cite{Tak1967} and used by Vervaat \cite{Veervat1979} (see also \cite[Section 3]{Bertoin2001} or \cite[Section 4.2]{DuquesneC2003}) to change a bridge-type process with paths in $\mathbf{D}([0,1], \mathbb{R})$ into an excursion-type process (i.e., a non-negative process that is equal to $0$ at times $0$ and $1$). More precisely, let $X = (X(t), t \in [0,1])$ be a stochastic process with paths in $\mathbf{D}([0,1], \mathbb{R})$ such that $X(0)=0$. 
\noindent Then, the Vervaat transform of $X$ is the stochastic process ${\rm Ver}_X := ({\rm Ver}_{X}(t), t \in [0,1])$ with paths in $\mathbf{D}([0,1], \mathbb{R})$, defined by 
\begin{eqnarray} \label{VerTran}
{\rm Ver}_{X}(t) = \left\{ \begin{array}{lcl}
              X(t+\rho_{X}) -  \inf_{0 \leq s \leq 1} X(s) & \mbox{  if } & t+\rho_{X} \leq 1,\\
            X(t+\rho_{X}-1) + X(1)-  \inf_{0 \leq s \leq 1} X(s)&  \mbox{  if } & t+\rho_{X} \geq 1 \\
              \end{array}
    \right. 
\end{eqnarray}
\noindent where $\rho_{X} = \inf \left\{ u \in [0,1]: \min(X(u-), X(u)) = \inf_{0 \leq s \leq 1} X(s) \right\}$.

Through this manuscript, unless otherwise specified, we always consider an EI bridge $X^{\rm bg}$ with parameters $(\theta_{i}, i \geq 0)$ such that $\sum_{i \geq 0}\theta_{i}^{2}=1$ and  \ref{A4} is satisfied (i.e.\ either  $\theta_{0} >0$ or $\sum_{i \geq 1} \theta_{i} = \infty$). This is a necessary and sufficient condition for $X^{\rm bg}$ to have paths of infinite variation. More importantly, by \cite{Knight1996} or \cite[Proof of Lemma 6]{Bertoin2001}, it is well-known that under this condition $X^{\rm bg}$ almost surely achieves its infimum at a unique time $\rho_{X^{\rm bg}}$ and continuously. Then, we let $X^{\rm exc} = (X^{\rm exc}(t), t \in [0,1])$ be the excursion-type process associated to $X^{\rm bg}$ via its Vervaat transform. Note that $X^{\rm exc}(0) = X^{\rm exc}(1)=0$ and $X^{\rm exc}(t) >0$ for all $t \in (0,1)$.

The excursion process $X^{\rm exc}$  is not necessarily continuous. However, following \cite[Section 2]{AldousMiermont2004}, one can also associate to $X^{\rm bg}$ a continuous excursion process $H^{\rm exc}$. For $i \geq 1$ such that $\theta_{i} >0$, write $t_{i} = \{U_{i} - \rho_{X^{\rm bg}} \}$ (the fractional part of $U_{i} - \rho_{X^{\rm bg}}$) for the location of the jump with size $\theta_{i}$ in $X^{\rm exc}$. For each $i \geq 1$ such that $\theta_{i} >0$, write $T_{i} = \inf \{t \in (t_{i},1]:  X^{\rm exc}(t) = X^{\rm exc}(t_{i}-) \}$, which exists since the process $X^{\rm exc}$ has no negative jumps and goes back to $0$ at time $1$. In particular, all $t_{i}$'s and $T_{i}$'s are distinct almost surely. For $i \geq 1$ such that $\theta_{i} >0$, let $R_{i} = (R_{i}(u), u \in [0,1])$ be the process defined by
\begin{eqnarray} \label{eq14}
R_{i}(u) = \left\{ \begin{array}{lcl}
              \inf_{t_{i} \leq s \leq u} X^{\rm exc}(s) - X^{\rm exc}(t_{i}-)  & \mbox{  if } & u \in [t_{i}, T_{i}],\\
            0  & & \mbox{otherwise}. \\
              \end{array}
    \right. 
\end{eqnarray}

\noindent If $\theta_{i} = 0$ then let $R_{i}$ be the null process on $[0,1]$. Define the process $H^{\rm exc} = (H^{\rm exc}(u), u \in [0,1])$ by
\begin{eqnarray} \label{ExcH}
H^{\rm exc}(u) = X^{\rm exc}(u) - \sum_{i \geq 1} R_{i}(u), \hspace*{4mm} \text{for} \hspace*{2mm} u \in [0,1]. 
\end{eqnarray}

As explained in \cite[Section 2]{AldousMiermont2004}, $H^{\rm exc}$ is a well-defined continuous excursion-type process. Furthermore, Aldous, Miermont and Pitman \cite[Theorem 1]{AldousMiermont2004} also showed that, when $\theta_{0}>0$ and $\sum_{i  \geq 1} \theta_{i} < \infty$, $H^{\rm exc}$ is the height process (up to a scaling factor) of an Inhomogeneous CRT with parameter set $\theta = (\theta_{0}, \theta_{1}, \dots)$. 

\subsection{The modified $\L$ukasiewicz path} \label{ModyF}

For $n \in \mathbb{N}$, let $\mathbf{s}_{n}$ be a degree sequence satisfying \ref{B1}-\ref{B3} and \ref{A4}. We then sample a tree $\mathbf{t}_{n}$ uniformly at random from $\mathbb{T}_{\mathbf{s}_{n}}$ and let $W_{n}^{\rm lex} = (W_{n}^{\rm lex}( V_{n} u), u \in [0,1])$ denote its time-rescaled $\L$ukasiewicz path. Recall that $V_n$ denotes the number of vertices in $\mathbf{t}_{n}$. Let $(b_{n}, n \geq 1)$ be a sequence satisfying \ref{B2}, and $X$ be an EI bridge with parameters $(\theta_{i}, i \geq 0)$ as in \ref{B2}.

\begin{theorem} \label{Theo5}
Suppose that $\mathbf{s}_{n}$ satisfies \ref{B1}-\ref{B3} and \ref{A4}. Then,  
\begin{eqnarray*}
(b_{n}^{-1} W_{n}^{\rm lex}( V_{n} u), u \in [0,1]) \xrightarrow[ ]{d} (X^{\rm exc}(u), u \in [0,1]), \hspace*{3mm} \text{as} \hspace*{2mm}  n \rightarrow \infty, \hspace*{2mm} \text{in} \hspace*{2mm} \mathbf{D}([0,1], \mathbb{R}).
\end{eqnarray*}
\end{theorem}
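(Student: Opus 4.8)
The plan is to identify the rescaled Łukasiewicz path $b_n^{-1}W_n^{\rm lex}(V_n\cdot)$ as the Vervaat transform of a discrete bridge-type process, and then invoke Kallenberg's criterion for convergence of processes with exchangeable increments. The key observation is that if we list the child numbers $k_{u(0)},\dots,k_{u(V_n-1)}$ in \emph{uniform random order} rather than lexicographic order, the resulting increments $k_{u(\sigma(i))}-1$ form an exchangeable sequence, and the associated partial-sum bridge $\widetilde W_n$ has the same law as a uniform random permutation of the (deterministic) multiset of increments $\{d^n(j)-1 : 1\le j\le V_n\}$, which sums to $-1$. Because the Łukasiewicz path of a uniform TGDS is, by the classical cycle lemma / Vervaat-type argument, exactly the Vervaat transform of this exchangeable bridge (up to the trivial shift making it an excursion), it suffices to prove that $b_n^{-1}\widetilde W_n(V_n\cdot)$ converges in $\mathbf D([0,1],\mathbb R)$ to the EI bridge $X^{\rm bg}$ with parameters $(\sigma,(\beta_i,i\ge 1))$, and then transfer the convergence through the (a.s.\ continuous, by \ref{A4}) Vervaat map.

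First I would set up the discrete exchangeable bridge precisely: let $\xi^n_1,\dots,\xi^n_{V_n}$ be a uniform random permutation of the child sequence $d^n(1),\dots,d^n(V_n)$, put $\widetilde W_n(j)=\sum_{\ell\le j}(\xi^n_\ell-1)$, and note $\widetilde W_n(V_n)=E_n-V_n=-1$. Then I would recall (or cite the cycle lemma) that the Łukasiewicz path of the uniform TGDS is distributed as the Vervaat transform $V(\widetilde W_n)$ of this bridge, since conditioning a uniformly-permuted bridge with a single strict overall minimum to be a nonnegative excursion is exactly the uniform measure on $\mathbb T_{\mathbf s_n}$. Second, I would apply Kallenberg's theorem \cite{Kallenberg1973} characterizing convergence of exchangeable-increment processes: the limit is determined by (i) the limiting ``Gaussian component'' — here one checks $b_n^{-2}\sum_i (d^n(i)-1)^2 \,\mathbf 1_{\{d^n(i)/b_n \text{ small}\}} \to \sigma^2$ using \ref{B3} together with \ref{B2}, after subtracting the contribution of the finitely-or-countably-many hubs; (ii) the limiting jump sizes — here $d^n(i)/b_n\to\beta_i$ by \ref{B2}, giving the atoms $\beta_i$ of the Lévy-type measure; and (iii) a centering/scaling condition, which is automatic since $b_n^{-1}\widetilde W_n(V_n)=-b_n^{-1}\to 0$. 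These three inputs are exactly assumptions \ref{B1}--\ref{A4}, so Kallenberg's criterion yields $b_n^{-1}\widetilde W_n(V_n\,\cdot)\xrightarrow{d} X^{\rm bg}$ in $\mathbf D([0,1],\mathbb R)$. Third, since \ref{A4} guarantees $X^{\rm bg}$ has infinite-variation paths, hence attains its infimum at a unique time $\rho$ continuously, the Vervaat map $V$ is continuous at $X^{\rm bg}$ almost surely; a continuous-mapping argument (taking care that the pre-limit objects $\widetilde W_n$ also have a.s.\ unique minima, which holds because the $d^n(i)$ are all distinct as reals only after the permutation is ``generic'' — more carefully, one can perturb or argue via the cycle lemma that the minimum index is a.s.\ unique) then transfers the convergence to $V(\widetilde W_n)\xrightarrow{d} V(X^{\rm bg})=X^{\rm exc}$, which is the claim.

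The main obstacle I anticipate is the verification that the rescaled Vervaat transform commutes properly with the scaling limit \emph{in the Skorokhod $J_1$ topology}, including the measurability/continuity of the argmin and the treatment of the discrete minimum: one must rule out that the pre-limit bridge has ties at its minimum in a way that would make the discrete Vervaat transform ill-defined or discontinuous, and one must show that the random shift $\rho_n/V_n$ converges to $\rho$. This is handled by standard arguments (the set of paths with a unique continuous minimum is a $J_1$-measurable set of full limit-measure, and convergence of argmins follows from an argmax-continuous-mapping lemma), but it requires care because $J_1$-continuity of the shift map fails in general and only holds at good paths. The other technical point — splitting the variance \ref{B3} into a ``Brownian'' part and the $\sum\beta_i^2$ ``hub'' part, uniformly in $n$ — is routine given \ref{B2} and a diagonal/dominated-convergence argument over the index $i$, using $\sum_{i\ge1}\beta_i<\infty$ only implicitly (it is not needed for Theorem \ref{Theo5}, which holds under \ref{A4} alone).
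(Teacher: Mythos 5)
Your proposal is correct and follows essentially the same route as the paper, whose own proof is simply the citation to \cite[Proposition 1]{Angtuncio2020}: there the breadth-first walk of $\mathbf{t}_n$ (equal in law to the $\L$ukasiewicz path) is realised as the discrete Vervaat transform of the exchangeable bridge obtained by uniformly permuting the child sequence, Kallenberg's criterion \cite{Kallenberg1973} gives convergence of the rescaled bridge to the EI bridge $X^{\rm bg}$ under \ref{B1}--\ref{B3}, and \ref{A4} (unbounded variation, hence unique and continuously attained infimum) lets one pass through the Vervaat map. One minor point of phrasing: the discrete Vervaat transform is always well-defined with a fixed argmin convention (ties at the discrete minimum do not make it ill-defined — the cycle lemma singles out a unique cyclic shift); the only genuine continuity issue is at the limit, which \ref{A4} resolves as you say. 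You also correctly observe that $\sum_i\beta_i<\infty$ plays no role here and Theorem \ref{Theo5} holds under \ref{A4} alone.
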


\begin{proof}
Let $\pi_{n}$ be a uniform permutation of $\{1, \dots, V_{n}\}$. Recall that $(d^{n}(i), 1 \leq i \leq V_{n})$ denotes the associated child sequence of $\mathbf{s}_{n}$. Define the process $Y_{n} = (Y_{n}(u), u \in [0,1])$ by letting
\begin{align*}
Y_{n}(u) = \sum_{i =1}^{ \lfloor V_{n}u \rfloor} (d^{n}(\pi_{n}(i)) -1), \quad \text{for} \quad u \in [0,1].
\end{align*}
\noindent For $1 \leq i \leq V_n$, define $\Delta Y_n(i/V_n) = Y_n(i/V_n)-Y_n((i-1)/V_n) = d^{n}(\pi_{n}(i)) -1$. Note that the increments $(\Delta Y_{n}(1/V_{n}),  \Delta Y_{n}(2/V_{n}) \dots, \Delta Y_{n}(1))$ are exchangeable (i.e., their distribution is invariant under permutation). Note also that $\Delta Y_{n}(i/V_{n}) = d^{n}(\pi_{n}(i)) -1 \geq -1$, for every $1 \leq i \leq V_{n}$, and furthermore, $Y_{n}(1) = -1$. Define
\begin{align*}
\rho_{n} = \min \left\{ i \in \{1, \dots, V_{n}\}: Y_{n}(i/V_{n}) = \min_{1 \leq j \leq V_{n}} Y_{n}(j/V_{n}) \right \}.
\end{align*} 
and let $Y_{n}^{\rm exc} = (Y_{n}^{\rm exc}(u), u \in [0,1])$ be the Vervaat transform of $Y_{n}$; see (\ref{VerTran}). Note that $Y_{n}^{\rm exc}(u) \geq 0$, for $u \in [0,1)$, and $Y_{n}^{\rm exc}(1) = -1$. 

Under our assumptions \ref{B1}-\ref{B3} and \ref{A4}, it follows from \cite[Theorem 2.2]{Kallenberg1973} that 
\begin{eqnarray*}
(b_{n}^{-1} Y_{n}(u), u \in [0,1]) \xrightarrow[ ]{d} (X^{\rm bg}(u), u \in [0,1]), \hspace*{3mm} \text{as} \hspace*{2mm}  n \rightarrow \infty, \hspace*{2mm} \text{in} \hspace*{2mm} \mathbf{D}([0,1], \mathbb{R}).
\end{eqnarray*}
\noindent Since the process $X^{\rm bg}$ achieves its infimum in a unique time and continuously, \cite[Lemma 3]{Bertoin2001} implies that $b_{n}^{-1} Y_{n}^{\rm exc}$ converges to $X^{\rm exc}$, the Vervaat transform of $X^{\rm bg}$. Finally, our claim follows by noticing (see e.g. \cite[Proof of Lemma 7]{Broutin2014}) that $Y_{n}^{\rm exc}$ has the same distribution as the $\L$ukasiewicz path $W_{n}^{\rm lex}$. 
\end{proof}

We now describe a modification of $W^{\text{lex}}_{n}$ from which we construct the discrete analogue of $H^{\rm exc}$. Here we make the convention $\inf \, \varnothing =1$ and $\sup \, \varnothing = 0$. For $\delta >0$, define
\begin{align*}
t_{0}^{n,\delta} \coloneqq 0 \quad \text{and} \quad t_{i}^{n,\delta} = \inf\{  u \in (t^{n,\delta}_{i-1}, 1]: |\Delta W^{\text{lex}}_{n}(V_{n}u)| > \delta b_{n}\}, \quad \text{for} \quad i \geq 1,
\end{align*}
\noindent and 
\begin{align*}
T_{i}^{n, \delta} = \inf\{ u \in (t_{i}^{n, \delta},1]: W^{\text{lex}}_{n}(V_{n}u) - W^{\text{lex}}_{n}(V_{n}t_{i}^{n,\delta}-)=-1 \}, \quad \text{for} \quad i \geq 1.
\end{align*}
\noindent Set $I^{ \delta}_{n} = \sup \{ i \geq 0: 0<t_{i}^{n,\delta}<1\}$. In particular, $I^{ \delta}_{n} \leq V_n$.
Note that $t_{i}^{n,\delta} = T_{i}^{n, \delta} =1$ for $i >I^{ \delta}_{n}$. If $I^{ \delta}_{n} >0$,  then for $i \geq 1$, we let $R_{i}^{n, \delta} = (R_{i}^{n, \delta}(u), u \in [0,1])$ be the process given by
\begin{eqnarray*}
R_{i}^{n, \delta}(u) = \left\{ \begin{array}{lcl}
             \inf_{t_{i}^{n, \delta}\leq s \leq u} W^{\text{lex}}_{n}(V_{n}s)-W^{\text{lex}}_{n}(V_{n}t_{i}^{n, \delta}-)  & \mbox{  if }  & u \in [t_{i}^{n, \delta}, T_{i}^{n, \delta}], \\
                           0 &  \mbox{otherwise}. &  \\
              \end{array}
    \right. 
\end{eqnarray*}

\noindent For $i > I^{ \delta}_{n}$ (even when $I^{ \delta}_{n} =0)$, we set $R_{i}^{n, \delta}(u)=0$, for all $u \in [0,1]$. Then, for $\delta >0$, we define the modified $\L$ukasiewicz path $G_{n}^{\delta} = (G_{n}^{\delta}(V_{n}u), u \in [0,1])$ of $\mathbf{t}_{n}$ by letting
\begin{eqnarray} \label{ModLu}
G_{n}^{\delta}(V_{n}u) = W^{\rm lex}_{n}(V_{n}u) - \sum_{i\geq 1}R_{i}^{n, \delta}(u).
\end{eqnarray}
\noindent Note that the sum on the right hand side is always finite for every $u \in [0,1]$, since the number of nonzero summands is $I^{ \delta}_{n}$. 

Let $(H^{\rm exc}(u), u \in [0,1])$ be the continuous excursion process associated to an EI bridge $(X^{\rm exc}(u), u \in [0,1])$ with parameters $(\theta_{i}, i \geq 0)$ defined in (\ref{ExcH}). Recall also the process $R_{i}=(R_{i}(u), u \in [0,1])$ defined in (\ref{eq14}). For $\delta >0$, we define the process $G^{\delta} = (G^{\delta}(u), u \in [0,1])$ by
\begin{eqnarray} \label{ModHeight}
G^{\delta}(u) = H^{\rm exc} + \sum_{i \geq 1} R_{i}(u)\mathbf{1}_{\{\theta_{i} \leq \delta \}} =  X^{\rm exc}(u) - \sum_{i \geq 1} R_{i}(u)\mathbf{1}_{\{\theta_{i} > \delta \}}.
\end{eqnarray}

\begin{theorem} \label{Theo6}
Suppose that $\mathbf{s}_{n}$ satisfies \ref{B1}-\ref{B}. Then, for $\delta \notin \{\theta_{i}: i \geq 1\}$, 
\begin{eqnarray*}
\left( b_{n}^{-1} G_{n}^{\delta}( V_{n} u), b_{n}^{-1} W^{{\rm lex}}_{n}(V_{n}u): u \in [0,1] \right) \xrightarrow[ ]{d} (G^{\delta}(u), X^{\rm exc}(u): u \in [0,1]), \hspace*{3mm} \text{as} \hspace*{2mm}  n \rightarrow \infty, 
\end{eqnarray*}
\noindent in $\mathbf{D}([0,1], \mathbb{R}) \times \mathbf{D}([0,1], \mathbb{R})$. Moreover, as $\delta \downarrow 0$ (still with the assumption that $\delta \notin \{\theta_{i}: i \geq 1\}$), $(G^{\delta}(u), u \in [0,1])$ converges uniformly to $H^{\rm exc}$.   
\end{theorem}

Observe that \ref{B} is included in \ref{A4}, so that in particular Theorem \ref{Theo5} holds under the assumptions of Theorem \ref{Theo6}.

\begin{proof}[Proof of Theorem \ref{Theo6}]
Recall that, if $\theta_{i} >0$, then $t_{i}$ denotes the location of the jump with size $\theta_{i}$ of $X^{{\rm exc}}$, i.e. $t_{i} = \{U_{i} - \rho_{X^{\rm bg}} \}$ (the fractional part of $U_{i} - \rho_{X^{\rm bg}}$). If $\theta_{i} =0$, then we let $t_{i} = 1$. For $\delta >0$, note that $I^{\delta} := \sup \{ i \geq 1: \theta_{i} >\delta \}< \infty$. By convenience, if $\theta_{1} \leq \delta$ (and thus, $\theta_{i} \leq \delta$, for all $i \geq 1$), then we set $I^{\delta} = 0$.  On the other hand, for $\delta >0$, define 
\begin{align*}
t_{0}^{\delta} \coloneqq 0 \quad \text{and} \quad t_{i}^{\delta} = \inf\{  u \in (t^{\delta}_{i-1}, 1]: |\Delta X^{\rm exc}(u)| > \delta\}, \quad \text{for} \quad i \geq 1.
\end{align*}
\noindent Here we also make the convention $\inf \varnothing =1$. If $I^{\delta} > 0$, then the sequence of times $(t_{i}^{\delta}, 1 \leq i \leq I^{\delta})$ is the sequence $(t_{i}, 1 \leq i \leq I^{\delta})$ in increasing order. For each $i \geq 1$, write $T_{i}^{\delta} = \inf \{t \in (t_{i}^{\delta},1]:  X^{\rm exc}(t) = X^{\rm exc}(t_{i}^{\delta}-) \}$ and let $R_{i}^{\delta} = (R_{i}^{\delta}(u), u \in [0,1])$ be the process given by
\begin{eqnarray*} 
R_{i}^{\delta}(u) = \left\{ \begin{array}{lcl}
              \inf_{t_{i}^{\delta} \leq s \leq u} X^{\rm exc}(s) - X^{\rm exc}(t_{i}^{\delta}-)  & \mbox{  if } & u \in [t_{i}^{\delta}, T_{i}^{\delta}],\\
            0  & & \mbox{otherwise}. \\
              \end{array}
    \right. 
\end{eqnarray*}
\noindent Note that, if $t_{i}^{\delta} = 1$, then $R_{i}^{\delta}(u)=0$ for all $u \in [0,1]$ and, if $I^{\delta}=0$, then $R_{i}^{\delta}(u)=0$, for all $u \in [0,1]$ and $i \geq 1$. Then, it should be clear that
\begin{eqnarray} \label{IdenExt}
G^{\delta}(u) =  X^{\rm exc}(u) - \sum_{i \geq 1} R_{i}^{\delta}(u), \quad \text{for} \quad u \in [0,1].
\end{eqnarray}

Note that, under our assumptions \ref{B1}-\ref{B}, the result in Theorem \ref{Theo5} holds, i.e.\ the rescaled $\L$ukasiewicz path $b_{n}^{-1} W_{n}^{\rm lex}$ converges towards $X^{\rm exc}$. By the Skorokhod representation theorem, there exists a probability space on which this convergence holds almost surely. Let us work on this space from now on. We claim that, for $\delta \notin \{\theta_{i}: i \geq 1\}$, 
\begin{itemize}
\item[(i)] for all $i \geq 1$, $\lim_{n \rightarrow \infty} t_{i}^{n, \delta} = t_{i}^{\delta}$ a.s., 
\item[(ii)] if $I^{\delta} > 0$, then for all $1 \leq i \leq I^{\delta}$, $\lim_{n \rightarrow \infty} T_{i}^{n, \delta} = T_{i}^{\delta}$ a.s., and
\item[(iii)] for all $i \geq 1$, $\left( b_{n}^{-1} R^{n, \delta}_{i}(u), u \in [0,1] \right) \rightarrow (R_{i}^{\delta}(u), u \in [0,1])$ a.s., as $n \rightarrow \infty$, in $\mathbf{D}([0,1], \mathbb{R})$.
\end{itemize}

Note that (i) follows from Theorem \ref{Theo5} and \cite[Proposition VI.2.7]{jacod2003}. Indeed, \cite[Proposition VI.2.7]{jacod2003} consider paths in $\mathbf{D}(\mathbb{R}_{+}, \mathbb{R})$. However, a close inspection of its proof shows that the result holds for paths in $\mathbf{D}([0,1], \mathbb{R})$ with minor modifications. 

Next, we prove (ii). It follows from Theorem \ref{Theo5} that $\liminf_{n \rightarrow \infty} T_{i}^{n,\delta} \geq T_{i}^{\delta}$. Suppose that $t^{\prime} = \limsup_{n \rightarrow \infty} T_{i}^{n, \delta} > T_{i}^{\delta}$ and up to extraction suppose that $t^{\prime}$ is actually the limit of $(T_{i}^{n, \delta}, n \geq 1)$. Since (i) and \cite[Proposition VI.2.1]{jacod2003} imply that $\lim_{n \rightarrow \infty} b_{n}^{-1}W^{\text{lex}}_{n}(V_{n}T_{i}^{n, \delta}) = X^{{\rm exc}}(t_{i}^{\delta}-)$, we would find that $t^{\prime} > T_{i}^{\delta}$ with $X^{\rm exc}(t^{\prime}) = X^{{\rm exc}}(t_{i}^{\delta}-)$ and $X^{\rm exc}(s) \geq X^{{\rm exc}}(t_{i}^{\delta}-)$, for $s \in [T_{i}^{\delta}, t^{\prime}]$. This shows that $X^{{\rm exc}}(t_{i}^{\delta}-)$ is a local minimum of $X^{{\rm exc}}$, attained at time $T_{i}^{\delta}$, which is a.s.\ impossible by \ref{B} and \cite[Lemma 1]{AldousMiermont2004}. Therefore, $\lim_{n \rightarrow \infty} T_{i}^{n, \delta} = T_{i}^{\delta}$ which proves (ii). 

We now prove (iii). Suppose first that $I^{\delta}=0$. Then $t_{i}^{\delta}=1$ and $R_{i}^{\delta}(u)=0$, for all $i \geq 1$ and $u \in [0,1]$. Moreover, for $i \geq 1$, a.s., 
\begin{align*}
\sup_{0 \leq u \leq 1} |b_{n}^{-1} R_{i}^{n, \delta}(u)| \leq b_{n}^{-1}\max(|\Delta W^{\text{lex}}_{n}(V_{n}t_{i}^{n, \delta})|,1) \rightarrow |\Delta X^{\text{exc}}(1)|=0, \quad \text{as} \quad  n \rightarrow \infty,
\end{align*}
\noindent by \cite[Proposition VI.2.1]{jacod2003}. Thus, (iii) holds whenever $I^{\delta}=0$. Suppose now that $I^{\delta}>0$. By Theorem \ref{Theo5}, there exists a sequence of
strictly increasing, continuous mappings $(\lambda_{n}, n \geq 1)$ of $[0,1]$ onto itself such that, a.s., 
\begin{eqnarray} \label{Nequ1}
\lim_{n \rightarrow \infty} \sup_{u \in [0,1]}| \lambda_{n}(u) - u| = 0 \hspace*{2mm} \text{and} \hspace*{2mm} \lim_{n \rightarrow \infty} \sup_{u \in [0,1]} \left| b_{n}^{-1} W^{{\rm lex}}_{n}(V_{n}\lambda_{n}(u)) -  X^{\rm exc}(u)  \right| = 0. 
\end{eqnarray}
\noindent For $1 \leq i \leq I^{\delta}$, we shall prove that, a.s., 
\begin{eqnarray} \label{Nequ2}
 \lim_{n \rightarrow \infty} \sup_{u \in [0,1]} \left| b_{n}^{-1} R_{i}^{n, \delta}(\lambda_{n}(u)) -  R^{\delta}_{i}(u)  \right| = 0. 
\end{eqnarray}
\noindent Suppose by contradiction that (\ref{Nequ2}) does not hold. Then there exists $\varepsilon > 0$ and a sequence $(z_{n}, n \geq 1) \in [0,1]^\mathbb{N}$ such that
\begin{align} \label{Nequ3}
\left| b_{n}^{-1} R_{i}^{n, \delta}(\lambda_{n}(z_{n})) -  R^{\delta}_{i}(z_{n})  \right| > \varepsilon.
\end{align}
\noindent for infinitely many indices $n$. Extracting a subsequence if necessary, let us assume that this holds for all $n$ and that either $z_{n} \uparrow z$ or $z_{n} \downarrow z$ for some $z \in [0,1]$, as $n \rightarrow \infty$. If $z \notin [t_{i}^{\delta}, T_{i}^{\delta}]$, then, by (i) and (ii), for $n$ large enough, $\lambda_{n}(z_{n}), z_{n} \notin [t_{i}^{\delta}, T_{i}^{\delta}] \cup  [t_{i}^{n, \delta}, T_{i}^{n, \delta}]$, which leads to a contradiction of (\ref{Nequ3}). 

Now, assume that $z_{n} \uparrow t_{i}^{\delta}$, as $n \rightarrow \infty$ and $z_{n} \neq t_{i}^{\delta}$ for all $n \geq 1$. It follows also from (\ref{Nequ1}) that $\lambda_{n}(z_{n}) \rightarrow t_{i}^{\delta}$ and $b_{n}^{-1} W^{{\rm lex}}_{n}(V_{n}\lambda_{n}(z_{n})) \rightarrow  X^{\rm exc}(t_{i}^{\delta}-)$, a.s., as $n \rightarrow \infty$. On the other hand, if  $z_{n} \uparrow T_{i}^{\delta}$, as $n \rightarrow \infty$, then by (\ref{Nequ1}) we have that $\lambda_{n}(z_{n}) \rightarrow T_{i}^{\delta}$ and $b_{n}^{-1} W^{{\rm lex}}_{n}(V_{n}\lambda_{n}(z_{n})) \rightarrow  X^{\rm exc}(T_{i}^{\delta})$ a.s., as $n \rightarrow \infty$ (note that $X^{\rm exc}(T_{i}^{\delta}) = X^{\rm exc}(t_{i}^{\delta}-)$). Thus, in both cases, $b_{n}^{-1} R_{i}^{n, \delta}(\lambda_{n}(z_{n})) \rightarrow 0$ and $R^{\delta}_{i}(z_{n}) \rightarrow 0$ a.s., as $n \rightarrow \infty$, which contradicts (\ref{Nequ3}). If $z_{n} \uparrow t_{i}^{\delta}$, as $n \rightarrow \infty$ and $z_{n} = t_{i}^{\delta}$, for $n$ large enough, then it follows from the definition of $t_{i}^{n,\delta}$, \cite[Proposition VI.2.1 and Proposition VI.2.7]{jacod2003} that 
$\lambda_{n}(z_{n}) = \lambda_{n}(t_{i}^{\delta}) = t_{i}^{n,\delta}$, for $n$ large enough. In particular, for $n$ large enough, $R_{i}^{n, \delta}(\lambda_{n}(z_{n})) = \Delta W^{{\rm lex}}_{n}(V_{n}t_{i}^{n,\delta})$, and thus, by \cite[Proposition VI.2.7]{jacod2003}, $b_{n}^{-1}R_{i}^{n, \delta}(\lambda_{n}(z_{n})) \rightarrow \Delta X^{\rm exc}(t_{i}^{\delta}) =  R_{i}^{\delta}(t_{i}^{\delta})$ a.s., as $n \rightarrow \infty$, which contradicts (\ref{Nequ3}). 

Suppose finally that $z_{n} \uparrow z \in (t_{i}^{\delta}, T_{i}^{\delta})$, then, by (i) and (ii), for $n$ large enough, $\lambda_{n}(z_{n}) \in [t_{i}^{n, \delta}, T_{i}^{n, \delta}]$ and $z_{n} \in [t_{i}^{\delta}, T_{i}^{\delta}]$. Then,
\begin{align} \label{Nequ4}
\left| b_{n}^{-1} R_{i}^{n, \delta}(\lambda_{n}(z_{n})) -  R^{\delta}_{i}(z_{n})  \right| \leq \left| \inf_{t_{i}^{n, \delta} \leq u \leq \lambda_{n}(z_{n})}  b_{n}^{-1}W^{\text{lex}}_{n}(V_{n}u) - \inf_{t_{i}^{\delta} \leq u \leq z_{n}}  X^{\text{exc}}(u)  \right|+ \left| b_{n}^{-1}W^{\text{lex}}_{n}(V_{n}t_{i}^{n,\delta}-) -  X^{\text{exc}}(t_{i}^{\delta}-)  \right|. 
\end{align}

\noindent On the other hand, note that
\begin{align} \label{Nequ5}
& \left| \inf_{t_{i}^{n, \delta} \leq u \leq \lambda_{n}(z_{n})}  b_{n}^{-1}W^{\text{lex}}_{n}(V_{n}u) - \inf_{t_{i}^{\delta} \leq u \leq z_{n}}  X^{\text{exc}}(u)  \right| \nonumber \\
& \quad \quad  \leq  \left| \inf_{\lambda_{n}^{-1}(t_{i}^{n, \delta}) \leq u \leq z_{n}}  b_{n}^{-1}W^{\text{lex}}_{n}(V_{n}\lambda_{n}(u)) - \inf_{\lambda_{n}^{-1}(t_{i}^{n,\delta}) \leq u \leq z_{n}}  X^{\text{exc}}(u)  \right| + \left| \inf_{\lambda_{n}^{-1}(t_{i}^{n,\delta}) \leq u \leq z_{n}}  X^{\text{exc}}(u)  -  \inf_{t_{i}^{\delta} \leq u \leq z_{n}}  X^{\text{exc}}(u)  \right| \nonumber \\
& \quad \quad  \leq  \sup_{\lambda_{n}^{-1}(t_{i}^{n,\delta}) \leq u \leq z_{n}} \left|  b_{n}^{-1}W^{\text{lex}}_{n}(V_{n}\lambda_{n}(u)) -  X^{\text{exc}}(u)  \right| + \left| \inf_{\lambda_{n}^{-1}(t_{i}^{n,\delta}) \leq u \leq z_{n}}  X^{\text{exc}}(u)  -  \inf_{t_{i}^{\delta} \leq u \leq z_{n}}  X^{\text{exc}}(u)  \right|, 
\end{align}
\noindent where $\lambda_{n}^{-1}$ denotes the inverse of $\lambda_{n}$. It follows from (\ref{Nequ1}), (\ref{Nequ4}), (\ref{Nequ5}), (i) and \cite[Proposition VI.2.1]{jacod2003} that $b_{n}^{-1} R_{i}^{n, \delta}(\lambda_{n}(z_{n})) -  R^{\delta}_{i}(z_{n})$ converges to $0$, a.s., as $n \rightarrow \infty$, and this yields to a contradiction of (\ref{Nequ3}). 

Similarly, if $z_{n} \downarrow z \in [0,1]$, as $n \rightarrow \infty$, then one can check that (\ref{Nequ3}) does not hold by considering the cases $z= t_{i}^{\delta}$, $z= T_{i}^{\delta}$, $z \in (t_{i}^{\delta}, T_{i}^{\delta})$ and $z \not \in [t_{i}^{\delta}, T_{i}^{\delta}]$, separately. We leave the details to the reader.

Therefore, we have showed (\ref{Nequ2}), that is, (iii), for every $1 \leq i \leq I^{\delta}$. For $i > I^{\delta}$, the proof of (iii) follows as in the case $I^{\delta} =0$. 

Finally, we have all the ingredients to prove Theorem \ref{Theo6}. Again, by Theorem \ref{Theo5}, we know that there exists a sequence of strictly increasing, continuous mappings $(\lambda_{n}, n \geq 1)$ of $[0,1]$ onto itself such that (\ref{Nequ1}) holds. Note that we have actually shown during the proof of (iii) that (\ref{Nequ2}) holds for every $i \geq 1$. Hence the triangle inequality shows that for every fixed $q \geq 1$ that, a.s., 
\begin{eqnarray} \label{Nequ6}
 \lim_{n \rightarrow \infty} \sup_{u \in [0,1]} \left| b_{n}^{-1} W^{{\rm lex}}_{n}(V_{n}\lambda_{n}(u)) - \sum_{i=1}^{q} b_{n}^{-1} R_{i}^{n, \delta}(\lambda_{n}(u)) -  X^{\rm exc}(u)  + \sum_{i = 1}^{q} R_{i}^{\delta}(u) \right| = 0. 
\end{eqnarray}
\noindent Note that there exists $q$ such that, for $n$ large enough, $t_{i}^{n, \delta} = t_{i}^{\delta} =1$, for $i >q$, which implies that $R_{i}^{n, \delta}(\lambda_{n}(u)) = R_{i}^{\delta}(u) = 0$, for all $u \in [0,1]$ and $i >q$. Therefore, the first part of Theorem \ref{Theo6} follows from (\ref{IdenExt}) and (\ref{Nequ6}). The second part of Theorem \ref{Theo6} follows by Dini's theorem (see for e.g., \cite[Theorem 7.13]{Rudin1976}).
\end{proof}

Let $(H_{n}(V_{n}u), u \in [0,1])$ be the time-rescaled height process associated to $\mathbf{t}_{n}$. 

\begin{theorem} \label{Theo7}
Suppose that $\mathbf{s}_{n}$ satisfies assumptions \ref{B1}-\ref{B}. Fix $q \geq 1$, let $U_{1}, \dots, U_{q}$ be i.i.d.\ uniform random variables in $[0, 1]$ independent of the rest and denote by $0 = U_{(0)} < U_{(1)} < \cdots < U_{(q)}$ their order statistics. Then,
\begin{eqnarray*}
\frac{\theta_{0}^{2}}{2} \frac{b_{n}}{V_{n}}\left( H_{n}(V_{n}U_{(i)}), \inf_{U_{(i-1)} \leq u \leq U_{(i)}} H_{n}(V_{n} u) \right)_{1 \leq i \leq q} \xrightarrow[ ]{d} \left( H^{\rm exc}(U_{(i)}), \inf_{U_{(i-1)} \leq u \leq U_{(i)}} H^{\rm exc}(u) \right)_{1 \leq i \leq q}, \hspace*{3mm} \text{as} \hspace*{2mm}  n \rightarrow \infty,
\end{eqnarray*}
\noindent holds jointly with the convergence in distribution of the $\L$ukasiewicz path from Theorem \ref{Theo6}.
\end{theorem}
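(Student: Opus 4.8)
The plan is to deduce the convergence of the (reduced) height process from the convergence of the modified {\L}ukasiewicz path $G_n$ established in Theorem \ref{Theo6}, via a discrete analogue of the identity relating the height process to the excursion-type process through the local-time/reflection terms $R_i$. The key observation, essentially from \cite{AldousMiermont2004}, is that for each $i$ the value $H^{\rm exc}(U_{(i)})$ can be recovered from the height of the point of $\mathcal{T}_\theta$ at mass $U_{(i)}$, and in the discrete setting $H_n(V_n U_{(i)}) = |u(\lfloor V_n U_{(i)}\rfloor)|$ counts the ancestors of the corresponding vertex, which is governed by the jumps of $W^{\rm lex}_n$ along the ancestral line. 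First I would pass, via Skorohod representation, to a probability space where the convergence in Theorem \ref{Theo6} holds almost surely, so that $b_n^{-1}W^{\rm lex}_n(V_n\cdot)\to X^{\rm exc}$ and $b_n^{-1}G_n(V_n\cdot)\to H^{\rm exc}$ uniformly, together with the convergences (i)--(iii) of jump locations $t_i^n\to t_i$, $T_i^n\to T_i$, and reflection processes $b_n^{-1}R_i^n\to R_i$ from the proof of Theorem \ref{Theo6}. Since $U_1,\dots,U_q$ are i.i.d. uniform and independent, the ordered points $U_{(i)}$ lie almost surely in the continuity set of $X^{\rm exc}$ and avoid all $t_i,T_i$, so all these convergences can be evaluated at the (random) points $U_{(i)}$ and at $\lfloor V_n U_{(i)}\rfloor / V_n$.

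The core of the argument is the following discrete identity. Writing $i_n = \lfloor V_n U_{(k)}\rfloor$ and letting $u(i_n)$ be the $i_n$-th vertex in lexicographic order, one has (cf. the classical formula $H_\tau(i) = \#\{0\le j<i : W^{\rm lex}_\tau(j) = \inf_{j\le \ell\le i} W^{\rm lex}_\tau(\ell)\}$, valid for finite plane trees) that the height $H_n(i_n)$ equals the number of strict ancestors of $u(i_n)$, and each ancestor contributes a unit decrease to be matched against a prior "record from the right". Rescaling by $b_n/V_n$ and using that each macroscopic jump $\beta_i$ crossed on the ancestral line is asymptotically accounted for by the reflection term $R_i$, one gets
\begin{equation*}
\frac{\sigma^2}{2}\,\frac{b_n}{V_n}\,H_n(V_n u) \;=\; \frac{\sigma^2}{2}\,\frac{b_n}{V_n}\,H_n^{(\mathrm{sm})}(V_n u) \;+\; \frac{b_n}{V_n}\sum_{i\ge 1}\big(\text{contribution of hub }i\big) \;+\; o(1),
\end{equation*}
where the "small" part $H_n^{(\mathrm{sm})}$, coming from the microscopic-degree vertices, is handled exactly as in the no-macroscopic-degree case of Marzouk \cite{Cyril2019} (which in turn follows the Brownian-CRT argument of \cite{AldousIII, Legall22005}): the height at a uniform point, rescaled, converges to $H^{\rm exc}$ built from the Brownian part, while the hub contributions converge to $\sum_i R_i(U_{(k)})$. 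Combining gives $\frac{\sigma^2}{2}\frac{b_n}{V_n}H_n(V_nU_{(k)}) \to X^{\rm exc}(U_{(k)}) - \sum_i R_i(U_{(k)}) = H^{\rm exc}(U_{(k)})$. The infima terms $\inf_{U_{(k-1)}\le u\le U_{(k)}}H_n(V_nu)$ are treated the same way: the infimum of the height process over an interval corresponds, via the reduced-tree construction, to the height of the last common ancestor of $u(\lfloor V_nU_{(k-1)}\rfloor)$ and $u(\lfloor V_nU_{(k)}\rfloor)$, and one tracks the same decomposition; uniform convergence of $b_n^{-1}G_n(V_n\cdot)$ plus continuity of $H^{\rm exc}$ gives convergence of the infima of the rescaled $G_n$, and the discrepancy between $H_n$ and the analogue built from $G_n$ is controlled by the hub contributions along the spine as above.

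The main obstacle I expect is precisely establishing that the rescaled discrete height $\frac{\sigma^2}{2}\frac{b_n}{V_n}H_n(V_n\cdot)$, evaluated at the finitely many marked points, is asymptotically equal to $b_n^{-1}G_n(V_n\cdot)$ at those points up to the hub corrections — i.e. controlling the "small jumps" part of the height process uniformly enough near a uniformly chosen point. Unlike the full functional tightness of the height process (which, as the introduction stresses, genuinely fails here), we only need this at a fixed finite set of independent uniform times, and there the argument reduces to a second-moment / exchangeable-increments estimate on $\sum_{j} \mathbf{1}\{W^{\rm lex}_n(j) \text{ is a right-record up to } i_n\}$ restricted to small-degree vertices, which is exactly the content of the estimates in \cite{Cyril2019} applied after removing the finitely many hubs $1,\dots,I_n$. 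The joint statement with Theorem \ref{Theo6} is automatic since everything is realized on the same Skorohod space; the passage back to convergence in distribution is standard. Putting the pieces together, and invoking the description of $\mathcal{T}_\theta^{(q)}$ via $(H^{\rm exc}(U_{(i)}), \inf H^{\rm exc})_{1\le i\le q}$ from \cite{AldousPitman1999, Camarri2000}, yields Theorem \ref{Theo7} and hence Theorem \ref{Theo2}.
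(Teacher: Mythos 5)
Your overall plan --- pass through Theorem~\ref{Theo6}, show that $H_n$ at the marked times is asymptotically proportional to $G_n$ via a concentration estimate along the ancestral line (spinal decomposition of Broutin--Marckert / Marzouk), treat the infima as last-common-ancestor heights, and invoke the description of $\mathcal{T}_\theta^{(q)}$ through $H^{\rm exc}$ --- is the same as the paper's. You also correctly identify that the hard step is a second-moment estimate after removing the hubs, and that only a finite set of marked times is needed (so functional tightness is not required).

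However, the central display you write down is not correct and cannot be made to cohere. You decompose the height as $\frac{\sigma^2}{2}\frac{b_n}{V_n}H_n = \frac{\sigma^2}{2}\frac{b_n}{V_n}H_n^{(\rm sm)} + \frac{b_n}{V_n}\sum_i(\text{hub contribution}) + o(1)$ and claim that the hub term converges to $\sum_i R_i(U_{(k)})$, ``combining'' to give $X^{\rm exc} - \sum_i R_i = H^{\rm exc}$. But the height of a vertex is the number of its ancestors, and each hub ancestor contributes exactly $1$; the number of hub ancestors is $O(I_n) = o(b_n)$, so the ``hub contribution'' to $H_n$ is $o(b_n)$ and vanishes after multiplying by $b_n/V_n$. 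It certainly does not converge to $\sum_i R_i$, which is macroscopic. Moreover, a sum of two limits cannot produce the difference $X^{\rm exc} - \sum_i R_i$; and $H^{\rm exc}$ is not ``built from the Brownian part,'' it is the full $X^{\rm exc}$ minus the reflection terms. It is the \emph{Łukasiewicz path}, not the height, that decomposes additively into small and hub parts: $W_n^{\rm lex} = G_n + \sum_i R_i^n$, with $G_n$ already stripped of the hub contributions.

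The statement one actually needs (the paper's equation~(11)) is a law-of-large-numbers relation between two \emph{different} quantities: writing $\hat{\rm R}(u_n)$ for the number of vertices branching off to the right of the ancestral line of a uniform $u_n$, \emph{excluding} children of hub vertices, one has $\hat{\rm R}(u_n) = G_n(\lfloor V_n U\rfloor)$ exactly, and the concentration
\[
\left|\tfrac{1}{\hat\sigma_n}\hat{\rm R}(u_n) - \tfrac{\hat\sigma_n}{2\hat E_n}\,|u_n|\right| \xrightarrow{\ \mathbb{P}\ } 0,
\]
with $\hat\sigma_n^2,\hat E_n$ the variance and edge count with hubs removed. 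This is established via the spinal decomposition, the Markov inequality, and the negative correlation of sampling without replacement. Rescaling then gives $\frac{\sigma^2}{2}\frac{b_n}{V_n}H_n(V_nU) \approx b_n^{-1}G_n(V_nU) \to H^{\rm exc}(U)$ directly; no separate ``small part'' and ``hub contribution'' limits are taken for $H_n$ itself. Your phrase ``asymptotically equal to $b_n^{-1}G_n$ up to the hub corrections'' is also misleading for the same reason: $G_n$ already has the hubs subtracted, so no further correction is needed.
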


Assuming Theorem \ref{Theo7}, we have now all the ingredients to establish Theorem \ref{Theo2}. 

\begin{proof}[Proof of Theorem \ref{Theo2}]
It follows from Theorem \ref{Theo7} and \cite[Theorem 1]{AldousMiermont2004}.  Indeed, \cite[Theorem 1]{AldousMiermont2004} shows that, under our assumptions, the exploration process of the Inhomogeneous CRT $\mathcal{T}_{\theta}$ with parameter set $\theta = (\theta_{0}, \theta_{1}, \dots)$ is distributed as $\frac{2}{\theta_{0}^{2}}H^{\rm exc}$. Further details can be found in \cite[Section 4]{AldousMiermont2004}.
\end{proof}

Note that, in Theorem \ref{Theo7}, we assumed that the degree sequence satisfies \ref{B}. But clearly, \ref{B} is included in \ref{A4}.

As a preparation for the proof of Theorem \ref{Theo7}, we deduce the following property for degree sequences that satisfy \ref{B1}-\ref{B3} and \ref{A4}. 

\begin{lemma} \label{lemma1}
Suppose that $\mathbf{s}_{n}$ satisfies assumptions \ref{B1}-\ref{B3} and \ref{A4}. Then, $\lim_{n \rightarrow \infty} V_{n}/b_{n} = \infty$. 
\end{lemma}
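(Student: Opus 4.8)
The claim is that $V_n/b_n \to \infty$ under \ref{B1}--\ref{A4}. The natural route is to derive a lower bound on $V_n$ in terms of $b_n$ using the degree variance assumption \ref{B3}. Recall that $\sum_{i \geq 0}(i-1)^2 N_i^n = \sum_{i \geq 0}(i^2 - 2i + 1)N_i^n$, and since the tree with degree sequence $\mathbf{s}_n$ has $V_n$ vertices and $E_n = V_n - 1$ edges, we have $\sum_{i \geq 0} i N_i^n = V_n - 1$ and $\sum_{i \geq 0} N_i^n = V_n$. Hence
\begin{equation*}
\sum_{i \geq 0}(i-1)^2 N_i^n = \sum_{i \geq 0} i^2 N_i^n - 2(V_n - 1) + V_n = \sum_{i \geq 0} i^2 N_i^n - V_n + 2.
\end{equation*}
So $b_n^{-2}\big(\sum_{i\geq 0} i^2 N_i^n - V_n + 2\big) \to \sigma^2 + \sum_{i\geq 1}\beta_i^2$, which is a finite strictly positive constant under \ref{A4} combined with the general structure (note $\sigma^2 + \sum\beta_i^2 > 0$ since either $\sigma^2>0$ or, if $\sigma = 0$, then $\sum_{i\geq 1}\beta_i = \infty$ forces at least one $\beta_i > 0$ — actually if all $\beta_i = 0$ the sum is $0$, so \ref{A4} guarantees positivity).

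\textbf{Key steps.} First I would dispose of the case where the limiting constant $c := \sigma^2 + \sum_{i\geq1}\beta_i^2$ might be infinite: under \ref{B3} it is assumed finite, so $b_n^{-2}\sum_{i\geq 0}(i-1)^2 N_i^n$ converges to a finite limit, hence this quantity is $O(b_n^2)$. Next, suppose for contradiction that $V_n/b_n \not\to \infty$; then along a subsequence $V_n \leq C b_n$ for some constant $C$. On this subsequence, $\sum_{i\geq 0} i^2 N_i^n = \sum_{i\geq 0}(i-1)^2 N_i^n + V_n - 2 = O(b_n^2) + O(b_n) = O(b_n^2)$. But I can also lower-bound $\sum_{i \geq 0} i^2 N_i^n$ from below: since $d^n(1) = \Delta_n$ is the maximum degree and \ref{B2} gives $\Delta_n/b_n \to \beta_1$, the term $(\Delta_n)^2 \sim \beta_1^2 b_n^2$ contributes, but more usefully $\sum_{i\geq 0} i N_i^n = V_n - 1$, and by Cauchy--Schwarz or simply $\sum i^2 N_i^n \geq (\sum i N_i^n)^2 / \sum N_i^n = (V_n-1)^2/V_n \sim V_n$. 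This shows $V_n = O(b_n^2)$, which does NOT immediately contradict $V_n \leq C b_n$. So a cruder bound is not enough; instead I should look directly at $\sum_{i \geq 0}(i-1)^2 N_i^n$ and use that $N_0^n$ (the number of leaves) contributes $N_0^n$ to it (since $(0-1)^2 = 1$), and that $N_0^n \geq$ (something comparable to $V_n$). Indeed in any tree the number of leaves is at least $1$, and more precisely $V_n = N_0^n + \sum_{i\geq 1} N_i^n$ with $\sum_{i\geq1} i N_i^n = V_n - 1$, so $N_0^n = 1 + \sum_{i\geq 1}(i-1)N_i^n \geq 1$. This still isn't a lower bound by $V_n$. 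The cleanest handle: $\sum_{i\geq 0}(i-1)^2 N_i^n \geq \sum_{i \geq 0}|i-1| N_i^n \cdot (\text{nothing useful})$... Instead, note $\sum_{i\geq 0}(i-1)^2 N_i^n \geq N_0^n + \sum_{i \geq 2} N_i^n \geq N_0^n$ and separately, counting leaves, in a plane tree one always has $N_0^n \geq \lceil (V_n+1)/2 \rceil$ when... no, that fails for paths.

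\textbf{Main obstacle.} The real point, and the step I expect to be delicate, is getting the correct lower bound linking $V_n$ to $\sum_{i\geq 0}(i-1)^2 N_i^n$. The right inequality is $V_n - 1 = \sum_{i \geq 0} i N_i^n \leq \sum_{i\geq 0} i^2 N_i^n$ is too weak; rather one uses that $b_n \to \infty$ and $b_n^{-2}\sum_i (i-1)^2 N_i^n \to c < \infty$ to conclude $\sum_i (i-1)^2 N_i^n = o(b_n^3)$... this gives nothing. The resolution must instead be: $V_n = \sum_{i\geq 0} N_i^n$, and I split the sum at a threshold, say degree $2$. Vertices of degree $0$ or $\geq 2$ each contribute $(i-1)^2 \geq 1$ to the variance sum, so $\#\{u : k_u \neq 1\} \leq \sum_i (i-1)^2 N_i^n = (c+o(1))b_n^2$. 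Meanwhile $N_1^n$, the number of degree-one vertices, satisfies $N_1^n = V_n - \#\{u : k_u \neq 1\}$. This alone doesn't bound $V_n$ either — a long path has huge $N_1^n$. So I conclude the honest proof must go the other way: it uses $b_n \to \infty$ and possibly the definition of $b_n$ from \ref{B2} more carefully, perhaps showing $V_n \geq \Delta_n \cdot(\text{something}) $ or using $\sum_{i \geq 1}\beta_i b_n \lesssim \sum_i i N_i^n = V_n - 1$ when $\beta_1 > 0$, giving $V_n \gtrsim b_n \sum_{i=1}^{I}\beta_i$ for each fixed $I$, hence $V_n/b_n \to \infty$ when $\sum\beta_i = \infty$; and when $\sigma^2 > 0$ one uses $\sigma_n^2 \sim b_n^2$ with $\sigma_n^2 = \sum i(i-1)N_i^n \leq \Delta_n \sum i N_i^n \leq \Delta_n V_n \sim \beta_1 b_n V_n$ if $\beta_1 > 0$, giving $V_n \gtrsim b_n$; and if $\beta_1 = 0$ then $\Delta_n = o(b_n)$ so $b_n^2 \sim \sigma_n^2 \leq \Delta_n V_n = o(b_n)V_n$, hence $V_n/b_n \to \infty$. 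This last chain is the crux, and assembling the cases $\beta_1 = 0$ versus $\beta_1 > 0$ (and within the latter, $\sigma^2 > 0$ versus $\sum\beta_i = \infty$) cleanly is the main work.
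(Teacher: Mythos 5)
There is a genuine gap in the case $\sigma^2 > 0$ with $\beta_1 > 0$ (and $\sum_i \beta_i < \infty$, since otherwise you are in your first case). Your chain $b_n^2 \lesssim \sigma_n^2 \leq \Delta_n (V_n - 1)$ together with $\Delta_n \sim \beta_1 b_n$ yields only $V_n \gtrsim b_n / \beta_1$, i.e.\ $V_n = \Omega(b_n)$ — a constant lower bound, not $V_n/b_n \to \infty$. You write ``giving $V_n \gtrsim b_n$'' and stop there, so this sub-case is not closed. The inequality $\sigma_n^2 \leq \Delta_n V_n$ is too crude to detect the contribution of the ``bulk'' of small-degree vertices, which is exactly what must be exploited.

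The paper resolves the $\sigma^2 > 0$ case (uniformly, without splitting on whether $\beta_1$ is zero) by a bucketing argument: assume for contradiction $V_n < C b_n$ along a subsequence, and for a free parameter $\varepsilon > 0$ split
\begin{equation*}
\frac{1}{b_n^2}\sum_{i \geq 0} (i-1)^2 N_i^n \leq \sum_{i \leq \lfloor \varepsilon b_n\rfloor} \frac{i^2}{b_n^2} N_i^n \;+\; \sum_{i > \lfloor \varepsilon b_n\rfloor} \frac{i^2}{b_n^2} N_i^n .
\end{equation*}
The first sum is $\leq (\varepsilon b_n/b_n^2)\sum_i i N_i^n \leq \varepsilon V_n / b_n \leq \varepsilon C$. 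For the second, the hypothesis $V_n < C b_n$ forces at most $\lceil C/\varepsilon\rceil$ vertices to have degree exceeding $\varepsilon b_n$ (their children alone would exceed $V_n - 1$ otherwise), so the second sum is at most $\sum_{i \leq \lceil C/\varepsilon\rceil} (d^n(i))^2/b_n^2$, whose limsup is at most $\sum_{i \geq 1}\beta_i^2$ by \ref{B2}. Choosing $\varepsilon < \sigma^2/C$ then contradicts \ref{B3}. Your treatment of the cases $\sum_i\beta_i = \infty$ and of $\sigma^2 > 0$ with $\beta_1 = 0$ are both correct and match the spirit of this argument, but you need the quantitative count of large-degree vertices under $V_n < C b_n$ to handle $\beta_1 > 0$; the pointwise bound $\sigma_n^2 \leq \Delta_n V_n$ cannot see that the small-degree bulk only contributes $O(\varepsilon)$ to the normalized variance.
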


\begin{proof}
First, suppose that $\sum_{i \geq 1}\theta_{i} = \infty$ in \ref{A4}. Then, our claim follows from \ref{B1}-\ref{B2} since $V_{n} \geq 1 + \sum_{i=1}^{k} d^{n}(i)$, for every fixed $1 \leq k \leq V_{n}$. Suppose now that $\theta_{0} >0$ in \ref{A4} and that our claim does not hold, i.e., there exists a constant $C>0$ such that, along a subsequence, $V_{n}< C b_{n}$. Then, for any $\varepsilon>0$,
\begin{align} \label{eq:less}
b_{n}^{-2} \sum_{i \geq 0} (i-1)^{2} N_{i}^{n} 
&\leq b_{n}^{-2} N_0^n + b_{n}^{-2} \sum_{i \geq 0} i^{2} N_{i}^{n}
\leq C b_n^{-1} + b_{n}^{-2} \sum_{i \leq \lfloor \varepsilon b_{n} \rfloor} i^{2} N_{i}^{n} + b_{n}^{-2} \sum_{i > \lfloor \varepsilon b_{n} \rfloor} i^{2} N_{i}^{n} \nonumber \\
& \leq C b_n^{-1} + \varepsilon C + b_{n}^{-2}\sum_{i > \lfloor \varepsilon b_{n} \rfloor} i^{2} N_{i}^{n},
\end{align}
\noindent where we have used that $V_{n} = \sum_{i \geq 0} N_{i} = 1 + \sum_{i \geq 0}i N_{i}$. Since we have assumed  $V_n < C b_n$, we necessarily have $\sum_{i > \lfloor \varepsilon b_{n} \rfloor} N_i^n \leq \lceil C/\varepsilon \rceil$ (by considering the number of children of these vertices). Hence,
\begin{align*}
b_{n}^{-2} \sum_{i > \lfloor \varepsilon b_{n} \rfloor} i^{2} N_{i}^{n} \leq b_{n}^{-2} \sum_{i \leq \lceil C/\epsilon \rceil} (d^n(i))^2.
\end{align*}
By \ref{B2}, we get that
\begin{align*}
\limsup_{n \rightarrow \infty} b_{n}^{-2} \sum_{i > \lfloor \varepsilon b_{n} \rfloor} i^{2} N_{i}^{n} \leq \sum_{i \leq \lceil C/\epsilon \rceil} \theta_{i}^{2} \leq \sum_{i \geq 1} \theta_i^2.
\end{align*}
\noindent Taking $\varepsilon< \theta_{0}^{2}/C$ provides a contradiction between \eqref{eq:less} and \ref{B3}.
\end{proof}

\begin{proof}[Proof of Theorem \ref{Theo7}]
We will follow an argument similar to that of the proof  of \cite[Theorem 2.4]{Cyril2019}. Let $U$ be uniformly distributed on $[0, 1]$ independently of $\mathbf{t}_{n}$ and let $u_{n}$ be the  $\lfloor V_{n} U \rfloor+1$-st vertex of $\mathbf{t}_{n}$ in lexicographical order, so that it has the uniform distribution in $\mathbf{t}_{n}$. Recall that $H_{n}( \lfloor V_{n} U \rfloor ) = |u_{n}|$ and that ${\rm R}(u_{n})$ denotes the number of individuals branching-off strictly to the right of the ancestral line $\llbracket \emptyset, u_{n}\llbracket$ in $\mathbf{t}_{n}$; see (\ref{rightEq1}). Recall that, by (\ref{eq10}), ${\rm R}(u_{n}) = W^{\rm lex}_{n}(V_{n} U)$. Fix $\delta >0$ and henceforth consider $n$ large enough such that $\delta b_{n} >1$. Let ${\rm R}^{\delta}(u_{n})$ be the number of individuals branching-off strictly to the right of the ancestral line $\llbracket \emptyset, u_{n}\llbracket$ in $\mathbf{t}_{n}$, except for vertices that are children of vertices with degree larger than $\delta b_{n}+1$, i.e., 
\begin{align} \label{eqExtra1}
{\rm R}^{\delta}(u_{n}) = G_{n}^{\delta}(\lfloor V_{n} U \rfloor),
\end{align}
\noindent where $G_{n}^{\delta}$ is the modified $\L$ukasiewicz defined in (\ref{ModLu}). Define also
\begin{align*}
\sigma_{n,\delta}^{2}  = \sum_{k = 0}^{ \lfloor \delta b_{n}  +1\rfloor} k (k-1) N_{k}^{n}.
\end{align*}
\noindent We claim that, for every $\varepsilon >0$, 
\begin{align} \label{eq11}
\lim_{\delta \downarrow 0}\limsup_{n \rightarrow \infty}\mathbb{P} \left( \left|  {\rm R}^{\delta}(u_{n}) - \frac{\sigma_{n,\delta}^{2}}{2 E_{n}} |u_{n}| \right| > \varepsilon  \sigma_{n,\delta} \right)= 0. 
\end{align}
\noindent We will prove \eqref{eq11} later; assume it for now. Note that \ref{B2}-\ref{B} imply that
\begin{align} \label{eqExtra2}
\lim_{n \rightarrow \infty}b_{n}^{-2} \sigma_{n,\delta}^{2}  = 1 - \sum_{i \geq 1} \theta_{i}^{2} \mathbf{1}_{\{ \theta_{i} > \delta \}} \quad \text{and} \quad \lim_{\delta \downarrow 0} \left( 1 - \sum_{i \geq 1} \theta_{i}^{2} \mathbf{1}_{\{ \theta_{i} > \delta \}} \right)  = \theta_{0}^{2}.
\end{align} 
\noindent Recall the definition of $G^\delta$ in \eqref{ModHeight} and that by Theorem \ref{Theo6}, $G^{\delta}$ converges uniformly to $H^{\rm exc}$, as $\delta \downarrow 0$. Therefore, a combination of (\ref{eqExtra1}), (\ref{eq11}), (\ref{eqExtra2}), the union bound and Theorem \ref{Theo6} shows that, jointly with the convergence of the $\L$ukasiewicz path from Theorem \ref{Theo5}, 
\begin{align*}
\frac{\theta^{2}_{0}}{2} \frac{b_{n}}{V_{n}}\left( H_{n}(V_{n}U_{(i)}) \right)_{1 \leq i \leq q} \xrightarrow[ ]{d} \left( H^{\rm exc}(U_{(i)}) \right)_{1 \leq i \leq q}, \hspace*{3mm} \text{as} \hspace*{2mm}  n \rightarrow \infty,
\end{align*}
\noindent that is, the convergence of the first marginal in Theorem \ref{Theo7}. 

Now we proceed to prove (\ref{eq11}). Recall the notation ${\rm LR}(u_{n}) = {\rm L}(u_{n}) + {\rm R}(u_{n})$ for the total number individuals branching-off the ancestral line of $u_{n}$ introduced after (\ref{rightEq1}) in Section \ref{sec:treesencoding}. Fix $\varepsilon, \eta >0$. By \cite[Propositions 4.4 and 4.5]{Cyril2019} and our assumptions, we can and will consider $K >0$ such that
\begin{align*}
\mathbb{P} \left( {\rm LR}(u_{n}) \leq K b_{n} \hspace*{1mm} \text{and}  \hspace*{1mm} |u_{n}| \leq K b_{n}^{-1}V_{n} \right ) \geq 1-\eta.
\end{align*} 
\noindent for every $n$ large enough. Then, 
\begin{align*}
\mathbb{P} \left( \left|  {\rm R}^{\delta}(u_{n}) - \frac{\sigma_{n,\delta}^{2}}{2 E_{n}} |u_{n}| \right| > \varepsilon  \sigma_{n,\delta} \right) \leq \eta + \mathbb{P} \left( \left| {\rm R}^{\delta}(u_{n}) - \frac{\sigma_{n,\delta}^{2}}{2 E_{n}} |u_{n}| \right| > \varepsilon \sigma_{n,\delta}, \, \, {\rm LR}(u_{n}) \leq K b_{n} \, \, \text{and} \, \, |u_{n}| \leq K b_{n}^{-1}V_{n}\right).
\end{align*}

Suppose that an urn contains initially $kN_{k}^{n}$ balls labelled $k$ for every $k \geq 1$, so $E_{n}$ balls in total. Let us pick balls repeatedly one after the other without replacement. For every $1 \leq i \leq E_{n}$, we denote the label of the $i$-th ball by $\xi_{n}(i)$. Conditionally on $(\xi_{n}(i), 1 \leq i \leq E_{n})$, let us sample independent random variables $(\chi_{n}(i), 1 \leq i \leq E_{n})$ such that each $\chi_{n}(i)$ is uniformly distributed in $\{1, \dots, \xi_{n}(i)\}$. The spinal decomposition obtained in \cite[Lemma 4.1]{Cyril2019} (see also \cite[Section 3]{Broutin2014}) with $q = 1$ shows that the probability of  the event that $|u_{n}| = h$ and that for all $0 \leq i < h$, the ancestor of $u_{n}$ at generation $i$ has $k_{i}$ offspring and its $j_{i}$-th is the ancestor of $u_{n}$ at generation $i+1$ is bounded by
\begin{eqnarray*}
\frac{1+\sum_{1 \leq i \leq h}(k_{i}-1)}{V_{n}} \cdot \mathbb{P} \left( \bigcap_{i \leq h} \{ (\xi_{n}(i), \chi_{n}(i)) = (k_{i}, j_{i}) \} \right).
\end{eqnarray*}

\noindent Note that, in the previous event, ${\rm R}^{\delta}(u_{n}) = \sum_{i \leq h} (k_{i}-j_{i})\mathbf{1}_{\{ k_{i} \leq  \delta b_{n}+1 \}}$. To see this, compare ${\rm R}^{\delta}(u_{n})$ with ${\rm R}(u_{n})$ defined in (\ref{rightEq1}). Note also that, in the previous event, ${\rm LR}(u_{n}) = \sum_{1 \leq i \leq h}(k_{i}-1) \leq Kb_{n}$. Thus, by decomposing according to the height of $u_{n}$ and taking the worst case (i.e.\ the union bound), we then obtain that, 
\begin{eqnarray} \label{eqExtra6}
\mathbb{P} \left( \left|  {\rm R}^{\delta}(u_{n}) - \frac{\sigma_{n,\delta}^{2}}{2 E_{n}} |u_{n}| \right| > \varepsilon  \sigma_{n,\delta} \right) \leq \eta + \left(\frac{K}{b_{n}} +K^{2}\right) \sup_{h \leq K V_{n} / b_{n}} \mathbb{P} \left( \left| \sum_{i \leq h} (\xi_{n}(i) - \chi_{n}(i)) \mathbf{1}_{\{ \xi_{n}(i) \leq \delta b_{n} +1  \}} - \frac{\sigma_{n, \delta}^{2}}{2 E_{n}} h \right| > \varepsilon \sigma_{n, \delta}\right).
\end{eqnarray}

\noindent Hence to prove (\ref{eq11}), it is enough to check the following two estimates:
\begin{itemize}
\item[(i)] $\displaystyle \mathbb{E} \left [\sum_{i \leq h} (\xi_{n}(i) - \chi_{n}(i)) \mathbf{1}_{\{ \xi_{n}(i) \leq \delta b_{n} +1  \}} \right] = \frac{\sigma_{n, \delta}^{2}}{2 E_{n}} h$, and

\item[(ii)]  $\displaystyle {\rm Var} \left (\sum_{i \leq h} (\xi_{n}(i) - \chi_{n}(i)) \mathbf{1}_{\{ \xi_{n}(i) \leq \delta b_{n} +1  \}} \right) \leq \frac{\sigma_{n, \delta}^{2}}{E_{n}} \delta b_{n}h$, for $h \geq 1$. 
\end{itemize}

\noindent Indeed, (\ref{eqExtra6}) and Chebychev’s inequality imply that
\begin{align*}
\mathbb{P} \left( \left|  {\rm R}^{\delta}(u_{n}) - \frac{\sigma_{n,\delta}^{2}}{2 E_{n}} |u_{n}| \right| > \varepsilon  \sigma_{n,\delta} \right) \leq \eta +  \left(\frac{K}{b_{n}} +K^{2}\right) \frac{K \delta \sigma_{n, \delta}^{2} V_{n} b_{n}}{\varepsilon^{2} \sigma_{n, \delta}^{2} E_{n} b_{n}} \leq  \eta +  \left(\frac{K}{b_{n}} +K^{2}\right) \frac{K \delta V_{n}}{\varepsilon^{2}  E_{n}}.
\end{align*}
\noindent Therefore, given that $\eta>0$ is arbitrary and \ref{B1}, (\ref{eq11}) follows by letting $n \rightarrow \infty$ and then $\delta \downarrow 0$. 

We start with the proof of (i). Note that, for every $1 \leq i \leq E_{n}$, $\mathbb{P}(\xi_{n}(i) = k) = kN_{k}^{n}/E_{n}$, for $k \geq 1$. Moreover, for every $k \geq 1$, $\mathbb{P}(\chi_{n}(i) = j | \xi_{n}(i) = k) = k^{-1}$, for $1 \leq j \leq k$. Hence, 
\begin{align*}
\displaystyle \mathbb{E} \left [\sum_{i \leq h} (\xi_{n}(i) - \chi_{n}(i)) \mathbf{1}_{\{ \xi_{n}(i) \leq \delta b_{n} +1  \}} \right] = h \sum_{k = 1}^{\lfloor \delta b_{n}  +1\rfloor} \sum_{j=1}^{k}\frac{(k-j)}{k}\frac{kN_{k}^{n}}{E_{n}} = \frac{\sigma_{n, \delta}^{2}}{2E_{n}} h.
\end{align*}
\noindent which proves (i). Now, we show (ii). Note that
\begin{align} \label{eqExtra3}
{\rm Var} \left (\sum_{i \leq h} (\xi_{n}(i) - \chi_{n}(i)) \mathbf{1}_{\{ \xi_{n}(i) \leq \delta b_{n} +1  \}} \right) & = {\rm Var} \left ( \mathbb{E} \left [\sum_{i \leq h} (\xi_{n}(i) - \chi_{n}(i)) \mathbf{1}_{\{ \xi_{n}(i) \leq \delta b_{n} +1  \}} \Big| (\xi_{n}(i), 1 \leq i \leq h) \right] \right) \nonumber \\
& \quad \quad  + \mathbb{E} \left[ {\rm Var} \left (\sum_{i \leq h} (\xi_{n}(i) - \chi_{n}(i)) \mathbf{1}_{\{ \xi_{n}(i) \leq \delta b_{n} +1  \}} \Big| (\xi_{n}(i), 1 \leq i \leq h) \right) \right] \nonumber \\
& = {\rm Var} \left (\sum_{i \leq h} \sum_{j=1}^{\xi_{n}(i)} \frac{(\xi_{n}(i) - j)}{\xi_{n}(i)}\mathbf{1}_{\{ \xi_{n}(i) \leq \delta b_{n} +1  \}}  \right) \nonumber \\
& \quad \quad  + \mathbb{E} \left[ {\rm Var} \left (\sum_{i \leq h} (\xi_{n}(i) - \chi_{n}(i)) \mathbf{1}_{\{ \xi_{n}(i) \leq \delta b_{n} +1  \}} \Big| (\xi_{n}(i), 1 \leq i \leq h) \right) \right]. 
\end{align}
\noindent On the one hand, the variables $\chi_{n}(1), \dots, \chi_{n}(E_{n})$ are conditionally independent. Then,
\begin{align} \label{eqExtra4}
& \mathbb{E} \left[ {\rm Var} \left (\sum_{i \leq h} (\xi_{n}(i) - \chi_{n}(i)) \mathbf{1}_{\{ \xi_{n}(i) \leq \delta b_{n} +1  \}} \Big| (\xi_{n}(i), 1 \leq i \leq h) \right) \right]  \nonumber \\
& \quad = \mathbb{E} \left[ \mathbb{E} \left[ \left (\sum_{i \leq h} (\xi_{n}(i) - \chi_{n}(i)) \mathbf{1}_{\{ \xi_{n}(i) \leq \delta b_{n} +1  \}}- \mathbb{E} \left[ \sum_{i \leq h} (\xi_{n}(i) - \chi_{n}(i)) \mathbf{1}_{\{ \xi_{n}(i) \leq \delta b_{n} +1  \}} \Big| (\xi_{n}(i), 1 \leq i \leq h) \right] \right)^{2}  \Big| (\xi_{n}(i), 1 \leq i \leq h) \right] \right]   \nonumber \\
& \quad = \mathbb{E} \left[ \mathbb{E} \left[ \sum_{i \leq h} \left ( (\xi_{n}(i) - \chi_{n}(i)) \mathbf{1}_{\{ \xi_{n}(i) \leq \delta b_{n} +1  \}}- \mathbb{E} \left[  (\xi_{n}(i) - \chi_{n}(i)) \mathbf{1}_{\{ \xi_{n}(i) \leq \delta b_{n} +1  \}} \Big| (\xi_{n}(i), 1 \leq i \leq h) \right] \right)^{2}  \Big| (\xi_{n}(i), 1 \leq i \leq h) \right] \right]   \nonumber \\
& \quad = \mathbb{E} \left[ \sum_{i \leq h} {\rm Var} \left( (\xi_{n}(i) - \chi_{n}(i)) \mathbf{1}_{\{ \xi_{n}(i) \leq \delta b_{n} +1  \}}   \Big| (\xi_{n}(i), 1 \leq i \leq h) \right) \right]  \nonumber \\
& \quad  \leq \mathbb{E} \left[ \sum_{i \leq h} \mathbb{E}  \left[  \left ((\xi_{n}(i) - \chi_{n}(i)) \mathbf{1}_{\{ \xi_{n}(i) \leq \delta b_{n} +1  \}}\right)^{2}  \Big| (\xi_{n}(i), 1 \leq i \leq h) \right]  \right]  \nonumber \\
& \quad  = \mathbb{E} \left[ \sum_{i \leq h} \sum_{j=1}^{\xi_{n}(i)} \frac{(\xi_{n}(i) - j)^{2}}{\xi_{n}(i)}\mathbf{1}_{\{ \xi_{n}(i) \leq \delta b_{n} +1  \}}  \right] \nonumber \\
& \quad  \leq \delta b_{n} h \mathbb{E} \left[ \sum_{j=1}^{\xi_{n}(1)} \frac{(\xi_{n}(1) - j)}{\xi_{n}(1)}\mathbf{1}_{\{ \xi_{n}(1) \leq \delta b_{n} +1  \}}  \right] \nonumber  \\
& \quad  \leq \frac{\sigma_{n, \delta}^{2}}{2 E_{n}} \delta b_{n} h. 
\end{align}
\noindent On the other hand, recall that the variables $\xi_{n}(1), \dots, \xi_{n}(E_{n})$ are obtained by successive picks without replacement in an urn, and therefore they are negatively correlated. Then, 
\begin{align}\label{eqExtra5}
{\rm Var} \left (\sum_{i \leq h} \sum_{j=1}^{\xi_{n}(i)} \frac{(\xi_{n}(i) - j)}{\xi_{n}(i)}\mathbf{1}_{\{ \xi_{n}(i) \leq \delta b_{n} +1  \}}  \right) & \leq h {\rm Var} \left ( \sum_{j=1}^{\xi_{n}(1)} \frac{(\xi_{n}(1) - j)}{\xi_{n}(1)}\mathbf{1}_{\{ \xi_{n}(1) \leq \delta b_{n} +1  \}}  \right)  \nonumber \\
& \leq h \mathbb{E} \left [ \left( \sum_{j=1}^{\xi_{n}(1)} \frac{(\xi_{n}(1) - j)}{\xi_{n}(1)}\mathbf{1}_{\{ \xi_{n}(1) \leq \delta b_{n} +1  \}} \right)^{2} \right] \nonumber \\
& \leq h \mathbb{E} \left [ \sum_{j=1}^{\xi_{n}(1)} \frac{(\xi_{n}(1) - j)^{2}}{\xi_{n}(1)}\mathbf{1}_{\{ \xi_{n}(1) \leq \delta b_{n} +1  \}}\right] \nonumber \\
& \leq  \frac{\sigma_{n, \delta}^{2}}{2 E_{n}} \delta b_{n} h.
\end{align} 
\noindent Therefore, the combination of \eqref{eqExtra3}, \eqref{eqExtra4} and \eqref{eqExtra5} proves (ii). 

The proof of the full statement in Theorem \ref{Theo7} follows as explained at the end of the proof of \cite[Theorem 2.4]{Cyril2019}, which is based on an extension of the previous argument and the spinal decomposition in \cite[Lemma 4.1]{Cyril2019}. Fix $q \geq 2$ an integer and let $U_{1}, \dots, U_{q}$ be $q$ i.i.d.\ uniform random variables in $[0,1]$ independently of $\mathbf{t}_{n}$. For $i=1, \dots, q$, let $u_{i,n}$ be the $\lfloor V_{n} U_{i} \rfloor+1$-st vertex of $\mathbf{t}_{n}$ in lexicographical order, so that $u_{1,n}, \dots, u_{q,n}$ are $q$ i.i.d.\ uniform random vertices of $\mathbf{t}_{n}$ (possibly with repetition). Let $\mathbf{T}^{(q)}_{n}$ be the reduced tree built from $\mathbf{t}_{n}$ by keeping only the root of $\mathbf{t}_{n}$ and these $q$ i.i.d.\ vertices together with their ancestors. Note that $\mathbf{T}^{(q)}_{n}$ has a random number of branching points $b \in \{0, \dots, q-1\}$ and a random number of leaves $\hat{q} \in \{1, \dots, q\}$.  

Remove from $\mathbf{T}^{(q)}_{n}$ its $\hat{q}$ leaves and $b$  branching points (if any) to obtain a random number $\hat{q}+b$ of single branches. Denote by $B_{n} = \{B^{1}_{n}, \dots, B^{\hat{q}+b}_{n}\}$ this collection (i.e., each $B^{i}_{n}$, for $i =1, \dots, \hat{q}+b$, is a set of vertices). It is worth noting that the sets (``branches'') $B^{i}_{n}$  may be empty. In the case $q=1$, with a single vertex $u_{1,n}$, the set $B_{1}^{1}$ is the ancestral line $\llbracket \emptyset, u_{1,n}\llbracket$. For every $i =1, \dots, \hat{q}+b$, we let $v_{i}(1), \dots, v_{i}(\# B^{i}_{n})$ be the vertices in $B^{i}_{n}$ (if $\# B^{i}_{n}  \geq 1)$ in lexicographical order. So, any vertex $v_{i}(m)$, for $m =1, \dots, \# B^{i}_{n}$, has $k_{m,i}^{n} \geq 1$ offspring in the original tree $\mathbf{t}_{n}$, and only one of them, the $j_{m,i}^{n}$-th say, is an ancestor of (or equal to) at least one the random vertices $u_{1,n}, \dots, u_{q,n}$. In particular, let $\hat{v}_{i}(\# B^{i}_{n})$ be the $j_{m,i}^{n}$-th child of $v_{i}(\# B^{i}_{n})$. 

As before, for a vertex $u$ (say, the $k$-th vertex of $\mathbf{t}_{n}$ in lexicographical order) that belongs to one of the single branches in $B_{n}$, let ${\rm R}^{\delta}(u)$ be the number of individuals branching-off strictly to the right of the ancestral line $\llbracket \emptyset, u \llbracket$ in $\mathbf{t}_{n}$, except for vertices that are children of vertices with degree larger than $\delta b_{n}+1$, i.e.,  
\begin{align} \label{eqExtra12vert2}
{\rm R}^{\delta}(u) = G_{n}^{\delta}(k),
\end{align}
\noindent where $G_{n}^{\delta}$ is the modified $\L$ukasiewicz defined in (\ref{ModLu}). For $i =1, \dots, \hat{q}+b$, recall that $pr(v_{i}(1))$ denotes the parent of the vertex $v_{i}(1)$ and let $\chi_{v_{i}(1)} \in \{1, \dots, k_{pr(v_{i}(1))} \}$ be the only index such that $v_{i}(1) = pr(v_{i}(1)) \chi_{v_{i}(1)}$ (here, $k_{pr(v_{i}(1))}$ denotes the number of children of $pr(v_{i}(1))$). Now, let
\begin{eqnarray} \label{eqExtra12vert3}
\hat{R}_{i,n}^{\delta} = \left\{ \begin{array}{lcl}
     {\rm R}^{\delta}(\hat{v}_{i}(\# B^{i}_{n}))- {\rm R}^{\delta}(pr(v_{i}(1))) - (k_{pr(v_{i}(1))} - \chi_{v_{i}(1)}) \mathbf{1}_{\{ |k_{pr(v_{i}(1))}-1| > \delta b_{n}\}}  & \mbox{  if } \,\, \# B^{i}_{n}  \geq 1,\\
            0  &  \mbox{otherwise}. \\
              \end{array}
    \right. 
\end{eqnarray}
\noindent for $i =1, \dots, \hat{q}+b$. We claim that, for every $\varepsilon >0$,
\begin{align}  \label{eq112vert}
\lim_{\delta \downarrow 0}\limsup_{n \rightarrow \infty}\mathbb{P} \left( \sup_{i=1,\dots, \hat{q}+b}\left| \hat{R}_{i,n}^{\delta} - \frac{\sigma_{n,\delta}^{2}}{2 E_{n}} \# B^{i}_{n} \right| > \varepsilon \sigma_{n,\delta} \right)= 0. 
\end{align}
\noindent Therefore, a combination of (\ref{eqExtra12vert2}),  (\ref{eqExtra12vert3}), (\ref{eq112vert}), (\ref{eqExtra2}), the union bound and Theorem \ref{Theo6} shows Theorem \ref{Theo7} for $q\geq 2$. 

We prove (\ref{eq112vert}) along the lines of the proof of (\ref{eq11}). Fix $\varepsilon, \eta >0$. By \cite[Propositions 4.4 and 4.5]{Cyril2019} and our assumptions, we can and will consider $K >0$ such that 
\begin{align} \label{EventTwo}
\mathbb{P} \left( \sum_{i=1}^{\hat{q}+b} \sum_{m=1}^{\# B^{i}_{n}} (k_{m,i}^{n} -1) \leq K b_{n} \quad \text{and} \quad  \sum_{i=1}^{\hat{q}+b} \# B^{i}_{n} \leq  \frac{KV_{n}}{b_{n}} \right ) \geq 1-\eta.
\end{align} 
\noindent for $n$ large enough. Since we have assumed that $b_{n} \rightarrow \infty$, as $n \rightarrow \infty$, we have that, under the above event in particular, with high probability none of the $q$ random vertices is an ancestor of another (i.e., the $q$ i.i.d.\ vertices are leaves in the reduced tree), so that $q=\hat{q}$. Recall the definition of the random variables $(\xi_{n}(i), 1 \leq i \leq E_{n})$, and $(\chi_{n}(i), 1 \leq i \leq E_{n})$. The spinal decomposition in \cite[Lemma 4.1]{Cyril2019} with $q \geq 2$ shows that under the above event (i.e., \eqref{EventTwo}), for any $h_{0}, h_{1}, \dots, h_{q+b}$ with $h_{0}=0$ and $h_{1}+\dots+h_{q+b} \leq K b_{n}^{-1}V_{n}$, and any integers $k_{m,i} \geq j_{m,i} \geq 1$ for every $m \in\{1, \dots, q+b\}$ and $i \in \{1, \dots, h_{m}\}$, the probability that the tree reduced to the $q$ random vertices and their ancestors has $b$  branching points and that, for every $m$, we have $\# B^{m}_{n} = h_{m}$ and $(k_{m,i}^{n}, j_{m,i}^{n})_{1 \leq i \leq \# B^{m}_{n}} = (k_{m,i}, j_{m,i})_{1 \leq i \leq h_{m}}$ is upper bounded by 
\begin{eqnarray*}
C \left(\frac{b_{n}}{V_{n}} \right)^{q+b} \mathbb{P} \left( \bigcap_{m=1}^{q+b} \bigcap_{i=1}^{h_{m}} \{ (\xi_{n}(i), \chi_{n}(i)) = (k_{m,i}, j_{m,i}) \} \right),
\end{eqnarray*}
\noindent for some constant $C >0$ that depends on $K$ (and $n$ large enough). Thus, by the union bound, 
\begin{align}
& \mathbb{P} \left( \sup_{i=1,\dots,\hat{q}+b}\left| \hat{R}_{i,n}^{\delta} - \frac{\sigma_{n,\delta}^{2}}{2 E_{n}} \# B^{i}_{n} \right| > \varepsilon \sigma_{n,\delta} \right) \nonumber \\
& \quad \quad \leq \eta + C \sum_{b=0}^{q-1}\left(\frac{b_{n}}{V_{n}} \right)^{q+b}   \sum_{h_{1}+\dots + h_{q+b} \leq K b_{n}^{-1}V_{n}} \sum_{m=1}^{q+b}   \mathbb{P} \left( \left| \sum_{1 \leq i \leq h_{m}} (\xi_{n}(i) - \chi_{n}(i)) \mathbf{1}_{\{ \xi_{n}(i) \leq \delta b_{n} +1  \}} - \frac{\sigma_{n, \delta}^{2}}{2 E_{n}} h_{m} \right| > \varepsilon \sigma_{n, \delta}\right).
\end{align}
\noindent Hence, by (i) and (ii) and the Chebychev's inequality, we have that, for some constant $C^{\prime}>0$
\begin{align}
& \mathbb{P} \left( \sup_{i=1,\dots,\hat{q}+b}\left| \hat{R}_{i,n}^{\delta} - \frac{\sigma_{n,\delta}^{2}}{2 E_{n}} \# B^{i}_{n} \right| > \varepsilon \sigma_{n,\delta} \right) \leq \eta + C^{\prime} \sum_{b=0}^{q-1}\frac{K^{q+b+1} \delta V_{n}}{\varepsilon^{2}E_{n}}.
\end{align}

\noindent Therefore, given that $\eta>0$ is arbitrary, using \ref{B1} and Lemma \ref{lemma1}, (\ref{eq112vert}) follows by letting $n \rightarrow \infty$ and then $\delta \downarrow 0$. 
\end{proof}

\subsection{Gromov-Hausdorff-Prohorov convergence of a specific model of TGDS} \label{ConvGHP}

In this section, we consider a specific case in which the convergence of trees with given degree sequence holds for the Gromov-Hausdorff-Prohorov topology. 

\begin{proposition} \label{The7}
Suppose that $\mathbf{s}_{n}$ satisfies \ref{B1}-\ref{B}. If moreover, 
\begin{align*}
\lim_{n \rightarrow \infty} \frac{b_{n}^{2}}{V_{n}} = 1 \quad \text{and} \quad \limsup_{n \rightarrow \infty} \frac{N_{1}^{n}}{V_{n}} <1,
\end{align*}
\noindent then
\begin{align*}
(\mathbf{t}_{n},  b_{n}^{-1} r_{n}^{{\rm gr}}, \rho_{n}, \mu_{n} ) \xrightarrow[ ]{d} (\mathcal{T}_{\theta}, r_{\theta}, \rho_{\theta}, \mu_{\theta}), \hspace*{3mm} \text{as} \hspace*{2mm}  n \rightarrow \infty,
\end{align*}
\noindent for the Gromov-Hausdorff-Prohorov topology, where $\mathcal{T}_{\theta}$ is an Inhomogeneous CRT with parameter set $\theta = (\theta_{0}, \theta_{1}, \dots)$.
\end{proposition}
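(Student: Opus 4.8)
The plan is to upgrade the Gromov-weak convergence of Theorem~\ref{Theo2} (equivalently \eqref{eq13}) to Gromov-Hausdorff-Prokhorov convergence by establishing tightness of the rescaled trees, which, as explained in the discussion after Theorem~\ref{Theo2}, amounts to proving a uniform control on the height of $\mathbf{t}_n$ — precisely \cite[equation (25)]{AldousIII} for the discrete height process $H_n$ after normalisation by $V_n/b_n$. Under the extra hypotheses $V_n^{-1}b_n^2 \to 1$ and $\sup_{n} V_n^{-1} N_1^n < 1$, the normalisation $V_n/b_n$ agrees asymptotically with $V_n/\sigma_n$ (since $\sigma_n^2/b_n^2 \to 1$ by \ref{B3}), and the condition on $N_1^n$ prevents a macroscopic proportion of vertices with exactly one child, which is exactly what rules out the pathological height blow-up mentioned in \cite[Section 1.2]{Cyril2019}.

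First I would recall the encoding of $\mathbf{t}_n$ by its height process $H_n$ (and its contour function $C_n$), and reduce the GHP statement to the uniform convergence of $(b_n/V_n) H_n(V_n \cdot)$ toward $(2/\sigma^2) H^{\rm exc}$ on $[0,1]$, using that joint convergence of the encoding functions (height or contour) implies GHP convergence of the coded trees together with their mass measures; see \cite[Section 1.6]{Legall22005} and \cite{Abraham2013}. Theorem~\ref{Theo7} already gives convergence of the finite-dimensional marginals of $(b_n/V_n)H_n(V_n\cdot)$ (evaluated at i.i.d.\ uniform points and at the infima between consecutive ordered points) toward those of $(2/\sigma^2)H^{\rm exc}$. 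So the only missing ingredient is tightness in $\mathbf{C}([0,1],\mathbb{R})$ of the sequence $((b_n/V_n)H_n(V_n\cdot), n\ge 1)$.

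The heart of the argument, and the step I expect to be the main obstacle, is this tightness, i.e.\ proving a bound of the form $\mathbb{E}[\,|H_n(i)-H_n(j)|^{\,\kappa}\,] \le C\, (b_n/V_n)^{-\kappa}\, |i-j|^{1+a}/V_n^{1+a}$ for suitable $\kappa,a>0$ uniformly in $n$ — or an equivalent moment estimate in the spirit of \cite[equation (25)]{AldousIII} and \cite[proof of Theorem 20]{AldousIII}. The natural route is the spinal decomposition already used in the proof of Theorem~\ref{Theo7} (from \cite[Lemma 3]{Cyril2019}, \cite[Section 3]{Broutin2014}): conditionally on the tree, a uniform vertex $u_n$ has a spine along which the offspring numbers behave like successive draws $\xi_n(i)$ without replacement from the urn of $E_n$ balls, and $|u_n|$ is the number of spine steps. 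The key point is that $\mathbb{E}[\xi_n(i)-1] = \sigma_n^2/E_n \asymp b_n^2/V_n \to 1/\text{(mean degree)}$, so the spine grows at a controlled linear rate, and the condition $\sup_n V_n^{-1}N_1^n<1$ forces a uniform positive lower bound on $\mathbb{P}(\xi_n(i)-1\ge 1)$ away from the degenerate case, ensuring that $|u_n|$ does not exceed $O(V_n/b_n)$ with more than exponentially small probability. Concretely, I would: (1) bound $\mathbb{P}(|u_n| \ge K V_n/b_n)$ by an exponential tail using the (negatively correlated, bounded-mean) increments $\xi_n(i)-1$ along the spine, a Laplace-transform / Bernstein-type estimate as in \cite[Proposition 5]{Cyril2019} but made quantitative; (2) upgrade to a moment bound on height increments $|H_n(i)-H_n(j)|$ by the same spinal decomposition applied to two (or a few) marked vertices, controlling the height of the most recent common ancestor, exactly as in the derivation of \cite[equation (25)]{AldousIII}; (3) translate this into tightness of the rescaled height process, hence (with Theorem~\ref{Theo7}) its weak convergence to $(2/\sigma^2)H^{\rm exc}$, and likewise for the contour function $C_n$.

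Finally, with $(b_n/V_n)C_n(2V_n\cdot) \to (2/\sigma^2) \mathbf{c}$ where $\mathbf{c}$ is the contour-type encoding of $\mathcal{T}_\theta$ (available since $\sigma^2>0$ and $\sum_i\beta_i<\infty$ — this is where \eqref{B} and the already-defined $H^{\rm exc}$ of \cite{AldousMiermont2004} are used), the continuity of the map sending an excursion to the coded rooted measured metric space in the GHP topology yields $(\mathbf{t}_n, b_n^{-1}r_n^{\rm gr},\rho_n,\mu_n) \xrightarrow{d} (\mathcal{T}_\theta, r_\theta,\rho_\theta,\mu_\theta)$, the factor $2/\sigma^2$ being absorbed because $\sigma=\theta_0$ enters the definition of $\mathcal{T}_\theta$ through the same Vervaat/line-breaking normalisation; matching constants is a routine check. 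The one subtlety worth flagging is that convergence of $H_n$ alone controls only distances to the root and to branchpoints along the spine, so one does genuinely need the contour function (or an argument controlling $\sup_{|i-j|\le \varepsilon V_n}|H_n(i)-H_n(j)|$) to get the full Hausdorff distance; the moment bound in step~(2) is designed exactly to provide that modulus-of-continuity control.
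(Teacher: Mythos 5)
Your high-level reduction (GHP convergence will follow from uniform convergence of the height/contour process, and this together with the finite-dimensional convergence of Theorem~\ref{Theo7} reduces to tightness of the rescaled height process) agrees with the paper. But for the tightness step you take a genuinely different route. The paper's proof is a deterministic sandwich: using inequalities \eqref{eq20}--\eqref{eq22} it bounds $\omega_{\delta}(H_{n})$ by $\omega_{\delta}(G_{n}) + \omega_{\delta}(G_{n}^{\rm rev})$ (the moduli of the modified \L ukasiewicz path and of the analogous path built from the reverse-lexicographical ordering, both of which already converge uniformly by Theorem~\ref{Theo6}), plus $I_n$, a single hub term, and a degree-one contribution, and then invokes the concentration argument of \cite[Lemma~8]{Broutin2014} to handle the degree-one term. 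You instead propose a ``hard'' moment-bound route \`a la \cite[equation~(25)]{AldousIII} via the multi-spine decomposition. That alternative is plausible in principle, but note two things the paper's route handles cleanly and that your sketch does not address. First, with macroscopic hubs the spinal increments $\xi_{n}(i)-1$ can be of order $\Delta_{n}\sim\beta_{1}b_{n}$, so the Bernstein/Laplace estimates you invoke require the same surgery (split off the hub contributions and control them separately, exactly what the modified path $G_n$ does) that the paper's deterministic sandwich makes automatic; you would have to spell this out. Second, you misidentify the role of the hypothesis $V_{n}^{-1}b_{n}^{2}\to 1$: the fact that $V_n/b_n$ agrees with $V_n/\sigma_n$ is automatic from \ref{B3} and carries no extra information. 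What $b_n^2\sim V_n$ actually does is make the two natural scales coincide: the \L ukasiewicz modulus lives at scale $b_n$ while the height lives at scale $V_n/b_n$, and the sandwich $\omega_\delta(H_n)\lesssim\omega_\delta(G_n)+\cdots$ only yields tightness of $b_n^{-1}H_n$ when $b_n\asymp V_n/b_n$. In your route the same identity is what lets you convert a moment bound on the branching-off count (order $b_n$ over a spine of order $V_n/b_n$) into a modulus bound on $b_n^{-1}H_n$; without flagging it, the plan has a hole at precisely the place where the extra hypothesis is consumed.
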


As mentioned in the introduction, Blanc-Renaudie \cite[Theorem 7 (a)]{Arthur2021} established necessary conditions for Gromov-Hausdorff-Prohorov convergence. Our assumptions \ref{B1}-\ref{B} and the condition $b_{n}^{2}/V_{n} \rightarrow 1$, as $n \rightarrow \infty$, align with \cite[Assumption 2]{Arthur2021} (denoted as $\mathcal{D}_{n} \Rightarrow \Theta$ in \cite[Theorem 7]{Arthur2021}). However, \cite{Arthur2021} imposed additional technical requirements, specifically \cite[Assumption 7]{Arthur2021}. In particular, \cite[Assumption 7 (ii)]{Arthur2021} imposes a condition on vertices with an out-degree greater than one (corresponding to $\sum_{i\geq 2} N_{i}^{n}$ in our notation), which is not a requirement for Proposition \ref{The7}. Furthermore, \cite[Assumption 7 (i)]{Arthur2021} presents another technical condition that does not appear straightforward to verify under the assumptions of Proposition \ref{The7}. Therefore, we have included this specific case because its proof, based on the convergence of height processes (see \eqref{HeightConv} below), differs from the approach in \cite{Arthur2021} and may be of independent interest. The proof of Proposition \ref{The7} builds upon arguments found in the proofs of \cite[Theorems 1 and 3]{Broutin2014}.

\begin{proof}[Proof of Proposition \ref{The7}] 
We prove that the family of processes $((b_{n}^{-1}H_{n}(V_{n}u), u \in [0,1]): n \geq 1)$ is tight. Since $H_{n}(0)=0$, it is enough to check that for any $\varepsilon, \varepsilon^{\prime} >0$, there exists $0 < \eta < 1$ such that 
\begin{align} \label{eq23}
\limsup_{n \rightarrow \infty}  \mathbb{P}(  \omega_{\eta}(H_{n}) \geq \varepsilon b_{n}) \leq \varepsilon^{\prime}, 
\end{align}

\noindent where $\omega_{\eta}(g) = \sup_{|u -u^{\prime}|\leq \eta} |g(u) - g(u^{\prime})|$, for a continuous function $g :[0,1] \rightarrow \mathbb{R}$; see \cite[Theorem 2.7.3]{Billi1999}. 

Once tightness is in place, Theorem \ref{Theo7} implies that any subsequential weak limit of $((b_{n}^{-1}H_{n}(V_{n}u), u \in [0,1]): n \geq 1)$ is distributed as $\frac{2}{\theta_{0}^{2}} H^{\rm exc}$, i.e., 
\begin{eqnarray} \label{HeightConv}
\left(\frac{\theta_{0}^{2}}{2b_{n}} H_{n}( V_{n} u), u \in [0,1] \right) \xrightarrow[ ]{d} (H^{\rm exc}(u), u \in [0,1]), \hspace*{3mm} \text{as} \hspace*{2mm}  n \rightarrow \infty, \hspace*{2mm} \text{in} \hspace*{2mm} \mathbf{D}([0,1], \mathbb{R}).
\end{eqnarray}
\noindent Then, the claim in Proposition \ref{The7} follows, from e.g.  \cite[Lemma 3.19]{Marckert2006E} and \cite[Proposition 3.3]{Abraham2013}. 

For two vertices $u, v \in \mathbf{t}_{n}$, recall that $u \wedge v$ denotes the first (highest) common ancestor of $u$ and $v$. Recall also that $|u|$ denotes the height of $u \in \mathbf{t}_{n}$. Then,
\begin{align} \label{eq1NSe3}
||u|-|v|| \leq ||u|-|u \wedge v|| + ||v|-|u \wedge v||. 
\end{align}
Let $u(0) \prec_{ \text{lex}}  u(1) \prec_{ \text{lex}}  \dots \prec_{ \text{lex}}  u(V_{n} -1)$ be the sequence of vertices of $\mathbf{t}_{n}$ in lexicographical order, where in particular $u(0)$ is the root of $\mathbf{t}_{n}$. By (\ref{eq1NSe3}), we deduce that
\begin{align} \label{eq20}
\underset{|i-j|\leq \eta V_{n}}{\sup}|H_{n}(i)-H_{n}(j)| \leq 2+2\sup_{\substack{|i-j|\leq \eta V_{n} \\ u (j) \in \llbracket u(0), u(i) \rrbracket  }} |H_{n}(i)-H_{n}(j)|, 
\end{align}
\noindent where the supremum is over $i,j \in \{ 0, \dots, V_{n}-1 \}$ such that $ u (j) \in \llbracket u(0), u(i) \rrbracket$. Recall that $\llbracket u,v \rrbracket$ denotes the unique geodesic path between $u$ and $v$ in $\mathbf{t}_{n}$. For $i,j \in \{ 0, \dots, V_{n}-1 \}$ such that $ u (j) \in \llbracket u(0), u(i) \rrbracket$, we see that every $v \in \llbracket u(j), u(i) \rrbracket$ which has degree more than one contributes at least one to the number of vertices branching-off the path $\llbracket u(j), u(i) \rrbracket$. So, for $\delta >0$ and $i,j \in \{ 0, \dots, V_{n}-1 \}$ such that $ u (j) \in \llbracket u(0), u(i) \rrbracket $, we have that
\begin{align} \label{eq21}
H_{n}(i)-H_{n}(j) - J_{n}(i,j) & \leq 1  + \sum_{v \in \llbracket u(j), u(i) \rrbracket} (k_{v}-1) \mathbf{1}_{\{ |k_{v}-1| \leq \delta b_{n} \}} + \sum_{v \in \llbracket u(j), u(i) \rrbracket} \mathbf{1}_{\{ |k_{v}-1| > \delta b_{n} \}},
\end{align}
\noindent where 
\begin{align}
J_{n}(i,j) = \sum_{v \in \llbracket u(j), u(i) \rrbracket} \mathbf{1}_{\{ k_{v} =1 \}}.
\end{align}

For $\delta >0$, let $G_{n}^{\delta}$ be the modified-$\L$ukasiewicz path defined in (\ref{ModLu}). We define also $\hat{G}_{n}^{\delta} = (\hat{G}_{n}^{\delta}(V_{n}u), u \in [0,1])$ as the version of the modified-$\L$ukasiewicz path defined as in (\ref{ModLu}) but with the time-rescaled reverse-$\L$ukasiewicz path $W_{n}^{\rm rev} = (W_{n}^{\rm rev}(V_{n}u), u \in [0,1])$ of $\mathbf{t}_{n}$ instead of $W_{n}^{\rm lex}$. In particular,  the claim in Theorem \ref{Theo6} remains valid if we replace $W_{n}^{\rm lex}$ and $G_{n}^{\delta}$ by $W_{n}^{\rm rev}$ and $\hat{G}_{n}^{\delta}$, respectively. It follows from (\ref{rightEq1}), (\ref{eq10}) and (\ref{eq19}) that, $i,j \in \{ 0, \dots, V_{n}-1 \}$ such that $ u (j) \in \llbracket u(0), u(i) \rrbracket $,
\begin{align} \label{eq22}
\sum_{v \in \llbracket u(j), u(i) \rrbracket} (k_{v}-1) \mathbf{1}_{\{ |k_{v}-1| \leq \delta b_{n} \}} \leq G_{n}^{\delta}(i+1) - G_{n}^{\delta}(j+1) + \hat{G}_{n}^{\delta}(i+1) - \hat{G}_{n}^{\delta}(j+1) +  2(k_{u(j)}-1) \mathbf{1}_{\{ |k_{u(j)}-1| \leq \delta b_{n}\}}
\end{align}
\noindent and
\begin{align} \label{eq2NSe3}
\sum_{v \in \llbracket u(j), u(i) \rrbracket}  \mathbf{1}_{\{ |k_{v}-1| > \delta b_{n} \}} \leq  \sum_{r=0}^{i}  \mathbf{1}_{\{ |\Delta W_{n}^{\rm lex}(r+1)|  > \delta b_{n} \}} \leq I_{n}^{\delta} = \max \{ i \in \{1, \dots, V_{n}\}: |\Delta W^{\text{lex}}_{n}(i)| > \delta b_{n}\}.
\end{align}

\noindent Then,  (\ref{eq20}), (\ref{eq21}), (\ref{eq22}) and (\ref{eq2NSe3}) allow us to deduce that, for $0 < \eta < 1$,
\begin{align} \label{eq3NSe3}
& \underset{|i-j|\leq \eta V_{n}}{\sup}|H_{n}(i)-H_{n}(j)| \nonumber \\
& \quad \quad \leq 4+2\sup_{\substack{|i-j|\leq \eta V_{n} \\ u (j) \in \llbracket u(0), u(i) \rrbracket  }} |G_{n}^{\delta}(i)-G_{n}^{\delta}(j)| + 2\sup_{\substack{|i-j|\leq \eta V_{n} \\ u (j) \in \llbracket u(0), u(i) \rrbracket  }} |\hat{G}_{n}^{\delta}(i)-\hat{G}_{n}^{\delta}(j)| + 2I_{n}^{\delta} + 4\delta b_{n} + \underset{|i-j|\leq \eta V_{n}}{\sup} J_{n}(i,j) \nonumber  \\
& \quad \quad \leq 4 + 2 b_{n} \omega_{\eta}(b_{n}^{-1}G_{n}^{\delta}) + 2 b_{n} \omega_{\eta}(b_{n}^{-1} \hat{G}_{n}^{\delta}) + 2I_{n}^{\delta} + 4\delta b_{n} + \underset{|i-j|\leq \eta V_{n}}{\sup} J_{n}(i,j). 
\end{align}

\indent \noindent For $0 < \eta <1$ and $k \in \mathbb{N}$, a sequence $\Delta_{k} = \{0 = t_{0} < t_{1} < \cdots < t_{k} = 1 \}$ of subdivisions of $[0, 1]$ is called $\eta$-sparse if it satisfies $\min_{1 \leq i \leq k} (t_{i} - t_{i-1}) \geq \eta$. The so-called modified modulus of continuity in $\mathbb{D}([0,1], \mathbb{R})$ is given by
\begin{eqnarray*}
\bar{\omega}_{\eta}(g) \coloneqq \inf_{\Delta_{k}} \, \max_{1 \leq i \leq k} \, \, \sup_{r,r^{\prime} \in [t_{i-1}, t_{i})} |g(r) - g(r^{\prime})|,  \quad \text{for} \, \, g \in \mathbb{D}([0,1], \mathbb{R}),
\end{eqnarray*}
\noindent where the infimum extends over all $\eta$-sparse sets $\Delta_{k}$. Note that (see e.g. \cite[equation (12.9)]{Billi1999})
\begin{align*}
\omega_{\eta}(b_{n}^{-1} G_{n}^{\delta}) \leq 2\bar{\omega}_{\eta}( b_{n}^{-1} G_{n}^{\delta})  + \delta  \quad \text{and} \quad \omega_{\eta}(b_{n}^{-1} \hat{G}_{n}^{\delta}) \leq 2\bar{\omega}_{\eta}(b_{n}^{-1} \hat{G}_{n}^{\delta})  + \delta.
\end{align*}

\noindent On the other hand, it is not difficult to see that, for $n$ large enough, by $\ref{B2}-\ref{B}$ that $I_{n}^{\delta} \leq I^{\delta} = \max \{i \geq 1: \theta_{i}>\delta \}$. Hence, by (\ref{eq3NSe3}), 
\begin{align} \label{eq4NSe3}
 \underset{|i-j|\leq \eta V_{n}}{\sup}b_{n}^{-1}|H_{n}(i)-H_{n}(j)| \leq 4 b_{n}^{-1} + 4 \bar{\omega}_{\eta}(b_{n}^{-1}G_{n}^{\delta}) + 4 \bar{\omega}_{\eta}(b_{n}^{-1} \hat{G}_{n}^{\delta}) + 2I^{\delta}b_{n}^{-1} + 6\delta + b_{n}^{-1} \underset{|i-j|\leq \eta V_{n}}{\sup} J_{n}(i,j). 
\end{align}
\noindent Given $0 < \eta < 1$ arbitrary, one can choose $\delta$ small enough so that the the first five terms in (\ref{eq4NSe3}) are arbitrary small (in probability) when $n$ is large. Indeed, Theorem \ref{Theo5} allows us to bound the terms $\bar{\omega}_{\eta}(b_{n}^{-1}G_{n}^{\delta})$ and $\bar{\omega}_{\eta}(b_{n}^{-1} \hat{G}_{n}^{\delta})$. It only remains to control the sixth term in (\ref{eq4NSe3}), that is, the term concerning the number of vertices of degree one. This is done as in the last part of the \cite[Proof of Lemma 8]{Broutin2014} to which we refer for a detailed argument. Informally, the idea is first sample a tree $\mathbf{t}_{n}$ with degree sequence $\mathbf{s}_{n}$ and then remove from it all the vertices of degree one, then the tightness of the associated height processes follows by (\ref{eq4NSe3}); one then needs to plug back these vertices of degree one. 
\end{proof}

Interestingly, Proposition~\ref{The7} shows that the limit of certain TGDSs can be the Brownian CRT. Specifically, it suffices to consider degree sequences where the number of vertices with degree $1$ is asymptotically negligible compared to the total number of vertices in the TGDS.

\begin{corollary}
Suppose that $\mathbf{s}_{n}$ satisfies \ref{B1}-\ref{B}. If moreover,
\begin{align} \label{Rev2SPAEq1}
\lim_{n \rightarrow \infty} \frac{b_{n}^{2}}{V_{n}} = 1 \quad \text{and} \quad \lim_{n \rightarrow \infty} \frac{N_{1}^{n}}{V_{n}} =0,
\end{align}
\noindent then
\begin{align*}
(\mathbf{t}_{n},  b_{n}^{-1} r_{n}^{{\rm gr}}, \rho_{n}, \mu_{n} ) \xrightarrow[ ]{d} (\mathcal{T}_{\rm Br}, r_{\rm Br}, \rho_{\rm Br}, \mu_{\rm Br}), \hspace*{3mm} \text{as} \hspace*{2mm}  n \rightarrow \infty,
\end{align*}
\noindent for the Gromov-Hausdorff-Prohorov topology, where $\mathcal{T}_{\rm Br}$ is the Brownian CRT.
\end{corollary}

\begin{proof}
By Proposition \ref{The7}, it is enough to identify that the limiting Inhomogeneous CRT is indeed the Brownian CRT. This is equivalent to showing that $\sum_{i \geq 1} \theta_{i}^{2} =0$ (i.e.\ $\theta_{0}=1$). To see this, note that
\begin{align*}
\sum_{i \geq 0} (i-1)^{2}N_{i}^{n} - V_{n} & = \sum_{i \geq 0} (i-1)^{2}N_{i}^{n}  - \sum_{i \geq 0} N_{i}^{n}  = -N_{1}^{n} + \sum_{i \geq 3} i(i-2) N_{i}^{n}  \nonumber \\
& = -N_{1}^{n} + \sum_{i \geq 1} d_{n}(i) (d_{n}(i)-2) \mathbf{1}_{\{d_{n}(i) \geq 3 \}}  \geq -N_{1}^{n} + \sum_{i \geq 1} d_{n}(i) (d_{n}(i)-2) \mathbf{1}_{\{d_{n}(i) > \delta b_{n} \}},
\end{align*} 
\noindent for all $\delta >0$ and $n$ such that $\delta b_{n} > 2$. Then, \eqref{Rev2SPAEq1} and our assumptions \ref{B1}-\ref{B} imply that 
\begin{align*}
\sum_{i \geq 1} \theta_{i}^{2} \mathbf{1}_{\{ \theta_{i} >\delta \}} = \liminf_{n \rightarrow \infty} \sum_{i \geq 1} \frac{d_{n}(i) (d_{n}(i)-2)}{V_{n}} \mathbf{1}_{\{d_{n}(i) \geq \delta b_{n} \}} \leq \liminf_{n \rightarrow \infty} \frac{N_{1}^{n}}{V_{n}} = 0,
\end{align*}
\noindent for all $\delta >0$. This implies that $\sum_{i \geq 1} \theta_{i}^{2} = 0$, which concludes our proof.
\end{proof}

\section{Lamination-valued processes} \label{sec:laminations}

This section is devoted to the proof of Theorem \ref{thm:cvlamproc}, which establishes the convergence of lamination-valued processes associated with plane trees. We will start by defining the set of laminations in Section \ref{laminations}. Then, we will rigorously define laminations for discrete and continuum trees in Sections \ref{discretelamination} and \ref{continuumlamination}, respectively, before presenting the proof of Theorem \ref{thm:cvlamproc} in Section \ref{proofLaminationT}.

Throughout this section, whenever we consider a random rooted plane tree $\tau$, we assume that the number of its vertices, $\zeta(\tau)$, is deterministic.  Similarly, for a sequence of such trees  $(\tau_{n}, n\geq 1)$, we also assume that $\zeta(\tau_{n})$ is deterministic for all $n \geq 1$.

\subsection{The set of laminations} \label{laminations}

Recall that a lamination is a closed subset of the closed unit disk $\bar{\D}$ which can be written as the union of the unit circle $\bS^1$ and a collection of chords which do not intersect in the open unit disk $\D$. By definition, a lamination is always compact. In this section, we denote by $d_{\rm H}$ the Hausdorff distance on the set $\mathbb{K}(\bar{\mathbb{D}})$ of compact subsets of $\bar{\mathbb{D}}$. Since $\bL(\bar{\mathbb{D}}) \subset \mathbb{K}(\bar{\mathbb{D}})$, $\bL(\bar{\mathbb{D}})$ is also naturally equipped with $d_H$. We denote by $d_{\rm Sk}^{\mathbb{L}}$ the $J_{1}$ Skorohod distance on $\mathbf{D}(\mathbb{R}_{+}, \bL(\bar{\mathbb{D}}))$; see e.g. \cite[Section 5 in Chapter 3]{Ethier1986} or \cite[Chapter 3]{Billi1999} for a precise definition.

\subsection{The discrete setting} \label{discretelamination}

\subsubsection{Laminations associated to plane trees}

We start by considering rooted plane trees. In this setting, as for fragmentation processes, there are two natural ways to define a lamination-valued process: either the one obtained from removing edges one by one at integer times, or the one that we get when putting i.i.d.\ variables on edges and removing those whose variable is smaller than a given value. 

\begin{definition}[Discrete lamination-valued process] \label{Def2}
Let $\tau$ be a rooted plane tree with contour function $C_{\tau}$ and  let $(e_{1}, \dots, e_{\zeta(\tau)-1})$ be a random uniform ordering of its edges. For $k = 1, \dots, \zeta(\tau)-1$, let $g_{k}$ and $d_{k}$ be the first and last times at which the contour function $C_{\tau}$ visits the endpoint of the edge $e_{k}$ further from the root $\emptyset$. Associate to $e_{k}$ the chord $c_{k} \coloneqq [e^{-2\pi i g_{k}/2\zeta(\tau)}, e^{-2\pi i d_{k}/2\zeta(\tau)}] \subset \bar{\mathbb{D}}$. We define the lamination-valued process $(\mathbb{L}_{t}(\tau), t \geq 0)$ associated to $\tau$ by letting
\begin{align*}
\mathbb{L}_{t}(\tau) \coloneqq \mathbb{S}^{1} \cup \bigcup_{k=1}^{\lfloor t \rfloor \wedge (\zeta(\tau)-1)} c_{k}, \hspace*{3mm} \text{for} \hspace*{2mm} t \in [0,\infty].
\end{align*}
\end{definition}

In particular, the process $(\mathbb{L}_{t}(\tau), t \geq 0)$ interpolates between $\mathbb{S}^{1}$ and $\mathbb{L}(\tau) \coloneqq \mathbb{L}_{\infty}(\tau)$; see Figure \ref{fig:arbconlam}. We also consider a dynamic continuous-time version of the lamination-valued process $(\mathbb{L}_{t}(\tau), t \geq 0)$. 

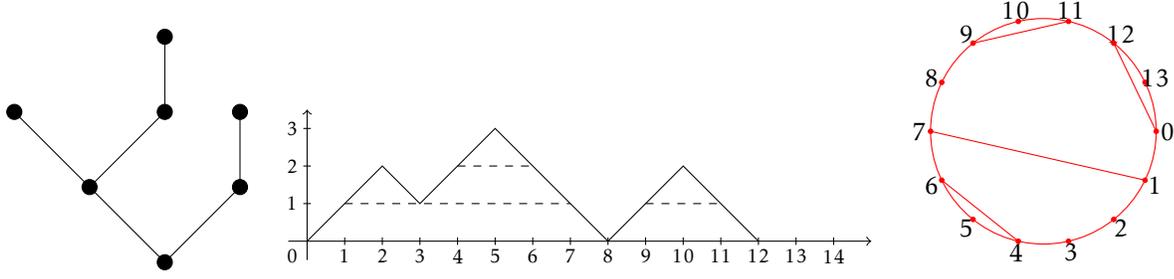
\begin{figure}[!htb]
\begin{tabular}{c c c}
\begin{tikzpicture}
\draw (1,2) -- (1,1) -- (0,0) -- (-1,1)--(0,2)--(0,3) (-1,1) -- (-2,2);
\draw[fill=black] (0,0) circle (.1);
\draw[fill=black] (1,1) circle (.1);
\draw[fill=black] (-1,1) circle (.1);
\draw[fill=black] (0,2) circle (.1);
\draw[fill=black] (0,3) circle (.1);
\draw[fill=black] (-2,2) circle (.1);
\draw[fill=black] (1,2) circle (.1);
\end{tikzpicture}
&
\begin{tikzpicture}[scale=.5, every node/.style={scale=0.7}]
\draw (0,0) -- (1,1) -- (2,2) -- (3,1) -- (4,2) -- (5,3) -- (6,2) -- (7,1) -- (8,0) -- (9,1) -- (10,2) -- (11,1) -- (12,0);
\draw[->] (0,-.5) -- (0,3.5);
\draw[->] (-.5,0) -- (15,0);
\draw (1,.1) -- (1,-.1);
\draw (1,-.4) node{1};
\draw (2,.1) -- (2,-.1);
\draw (2,-.4) node{2};
\draw (3,.1) -- (3,-.1);
\draw (3,-.4) node{3};
\draw (4,.1) -- (4,-.1);
\draw (4,-.4) node{4};
\draw (5,.1) -- (5,-.1);
\draw (5,-.4) node{5};
\draw (6,.1) -- (6,-.1);
\draw (6,-.4) node{6};
\draw (7,.1) -- (7,-.1);
\draw (7,-.4) node{7};
\draw (8,.1) -- (8,-.1);
\draw (8,-.4) node{8};
\draw (9,.1) -- (9,-.1);
\draw (9,-.4) node{9};
\draw (10,.1) -- (10,-.1);
\draw (10,-.4) node{10};
\draw (11,.1) -- (11,-.1);
\draw (11,-.4) node{11};
\draw (12,.1) -- (12,-.1);
\draw (12,-.4) node{12};
\draw (13,.1) -- (13,-.1);
\draw (13,-.4) node{13};
\draw (14,.1) -- (14,-.1);
\draw (14,-.4) node{14};
\draw (.1,1) -- (-.1,1);
\draw (-.4,1) node{1};
\draw (.1,2) -- (-.1,2);
\draw (-.4,2) node{2};
\draw (.1,3) -- (-.1,3);
\draw (-.4,3) node{3};
\draw (-.4,-.4) node{0};
\draw [dashed](1,1) -- (7,1);
\draw [dashed](4,2) -- (6,2);
\draw [dashed](9,1) -- (11,1);
\end{tikzpicture}
&
\begin{tikzpicture}[scale=1.5, every node/.style={scale=.9}]
\foreach \i in {0,...,13}
{
\draw[auto=right] ({1.1*cos(-(\i)*360/14)},{1.1*sin(-(\i)*360/14)}) node{\i};
\draw[red,fill=red] ({cos(-(\i-1)*360/14)},{sin(-(\i-1)*360/14)}) circle (.02);
}
\draw[red] (0,0) circle (1);
\draw[red] ({cos(-360*0/14)},{sin(-360*0/14)}) -- ({cos(-360*12/14)},{sin(-360*12/14)});
\draw[red] ({cos(-360*7/14)},{sin(-360*7/14)}) -- ({cos(-360*1/14)},{sin(-360*1/14)});
\draw[red] ({cos(-360*4/14)},{sin(-360*4/14)}) -- ({cos(-360*6/14)},{sin(-360*6/14)});
\draw[red] ({cos(-360*9/14)},{sin(-360*9/14)}) -- ({cos(-360*11/14)},{sin(-360*11/14)});
\end{tikzpicture}
\end{tabular}
\caption{A tree $\tau$, its contour function $C_{\tau}$, and the associated lamination $\mathbb{L}(\tau)$.}
\label{fig:arbconlam}
\end{figure}

\begin{definition}[Dynamic discrete lamination-valued process] \label{Def3}
Let $\tau$ be a rooted plane tree with contour function $C_{\tau}$ and denote by $(e_{1}, \dots, e_{\zeta(\tau)-1})$ its edges (their ordering is irrelevant). Given the tree $\tau$, its edges are equipped with i.i.d.\ exponential random variables of parameter $1$, say $(\gamma_{1}, \dots, \gamma_{\zeta(\tau)-1})$. For $k = 1, \dots, \zeta(\tau)-1$, let $g_{k}$ and $d_{k}$ be the first and last times at which the contour function $C_{\tau}$ visits the endpoint of the edge $e_{k}$ further from the root $\emptyset$. For $t \geq 0$, we associate to $e_{k}$ the chord $c_{k}(t) \coloneqq [e^{-2\pi i g_{k}/2\zeta(\tau)}, e^{-2\pi i d_{k}/2\zeta(\tau)}] \subset \bar{\mathbb{D}}$ whenever $\gamma_{k} \leq t$, and otherwise we set $c_{k}(t) = \mathbb{S}^{1}$. We define the dynamic lamination-valued process $(\mathbb{L}_{t}^{\rm d}(\tau), t \geq 0)$ by letting
\begin{align*}
\mathbb{L}^{\rm d}_{t}(\tau) \coloneqq \mathbb{S}^{1} \cup \bigcup_{k=1}^{\zeta(\tau)-1} c_{k}(t), \hspace*{3mm} \text{for} \hspace*{2mm} t \in [0,\infty].
\end{align*}  
\end{definition}

The process $(\mathbb{L}^{\rm d}_{t}(\tau), t \geq 0)$ also interpolates between $\mathbb{S}^{1}$ and $\mathbb{L}(\tau) \coloneqq \mathbb{L}^{\rm d}_{\infty}(\tau)$. Furthermore, it is possible to couple $(\mathbb{L}_{t}(\tau), t \geq 0)$ and $(\mathbb{L}_{t}^{\rm d}(\tau), t \geq 0)$ in a natural way so that, under mild assumptions, they are asymptotically close. Indeed, with the same notation as in Definition \ref{Def3}, since the i.i.d.\ exponential variables $(\gamma_k, 1 \leq k \leq \zeta(\tau)-1)$ are a.s.\ distinct, their ordering induces a uniform ordering on the edges of $\tau$. In what follows, we always implicitly consider that we work under this coupling.

\begin{proposition}
\label{prop:closeprocesses}
Let $(\tau_{n}, n \geq 1)$ be a sequence of random rooted plane trees and $(a_{n}, n \geq 1)$ be a sequence of positive real numbers such that $a_{n} \rightarrow \infty$ and $\zeta(\tau_{n})/a_{n} \rightarrow \infty$, as $n \rightarrow \infty$. Then, under the coupling defined above, for any $\varepsilon >0$,
\begin{align*}
\lim_{n \rightarrow \infty} \mathbb{P} \left( d_{\rm Sk}^{\mathbb{L}} \left( \left(\bL_{\frac{ta_n}{\zeta(\tau_{n})}}^{\rm d}(\tau_{n} ), t \geq 0 \right), ( \bL_{ta_n}( \tau_{n}), t \geq 0) \right) > \varepsilon  \right)= 0.
\end{align*}
\end{proposition}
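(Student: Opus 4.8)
The plan is to build both lamination-valued processes on a common probability space so that, after one random time change of $\R_{+}$ that is asymptotically the identity on every compact interval, they become \emph{equal}, and then to estimate that time change. The starting point is the observation that, in Definition~\ref{Def3}, sorting the i.i.d.\ clocks $(\gamma_{1},\dots,\gamma_{\zeta(\tau_{n})-1})$ produces a uniformly random ordering of the edges of $\tau_{n}$. Hence I would couple the two constructions by taking the uniform ordering $(e_{1},\dots,e_{\zeta(\tau_{n})-1})$ of Definition~\ref{Def2} to be the one for which $\gamma_{e_{1}}<\gamma_{e_{2}}<\dots<\gamma_{e_{\zeta(\tau_{n})-1}}$, and I write $\gamma_{(1)}<\gamma_{(2)}<\cdots$ for these order statistics. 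Under this coupling, the chords present in $\bL^{\rm d}_{s}(\tau_{n})$ are exactly $c_{1},\dots,c_{N_{n}(s)}$ with $N_{n}(s)=\#\{k\le\zeta(\tau_{n})-1:\gamma_{(k)}\le s\}$, so that $\bL^{\rm d}_{s}(\tau_{n})=\bL_{N_{n}(s)}(\tau_{n})$ for every $s\ge 0$.

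Next, both rescaled processes run through the same nested family $\bL_{0}(\tau_{n})\subseteq\bL_{1}(\tau_{n})\subseteq\dots\subseteq\bL(\tau_{n})$: the process $t\mapsto\bL_{ta_{n}}(\tau_{n})$ appends the chord $c_{k}$ at time $k/a_{n}$, while by the previous display $t\mapsto\bL^{\rm d}_{ta_{n}/\zeta(\tau_{n})}(\tau_{n})$ appends $c_{k}$ at time $r_{n,k}:=\gamma_{(k)}\zeta(\tau_{n})/a_{n}$. I would then let $\lambda_{n}\colon\R_{+}\to\R_{+}$ be the strictly increasing, piecewise-linear bijection with $\lambda_{n}(0)=0$, $\lambda_{n}(k/a_{n})=r_{n,k}$ for $1\le k\le\zeta(\tau_{n})-1$, and slope $1$ beyond the last breakpoint; one checks directly that $\bL_{ta_{n}}(\tau_{n})=\bL^{\rm d}_{\lambda_{n}(t)a_{n}/\zeta(\tau_{n})}(\tau_{n})$ for every $t\ge 0$, i.e.\ the two processes differ only through the time change $\lambda_{n}$. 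Since, for $f_{n}=g_{n}\circ\lambda_{n}$, the $J_{1}$ distance $d_{\rm Sk}^{\mathbb{L}}(f_{n},g_{n})$ is controlled by $\big(\sup_{0\le t\le m}|\lambda_{n}(t)-t|\big)_{m\ge 1}$, it then suffices to prove that $\sup_{0\le t\le T}|\lambda_{n}(t)-t|\xrightarrow[ ]{\mathbb{P}}0$ for each fixed $T>0$.

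Finally, writing $K_{n}=\lceil Ta_{n}\rceil$, one has $\sup_{0\le t\le T}|\lambda_{n}(t)-t|\le a_{n}^{-1}+\tfrac{\zeta(\tau_{n})}{a_{n}}\max_{0\le k\le K_{n}}\big|\gamma_{(k)}-\tfrac{k}{\zeta(\tau_{n})}\big|$, so the claim reduces to $\tfrac{\zeta(\tau_{n})}{a_{n}}\max_{0\le k\le K_{n}}\big|\gamma_{(k)}-\tfrac{k}{\zeta(\tau_{n})}\big|\to 0$ in probability. Here I would invoke Rényi's representation $\gamma_{(k)}\overset{d}{=}\sum_{i=1}^{k}E_{i}/(\zeta(\tau_{n})-i)$ with $(E_{i})$ i.i.d.\ standard exponential, and split $\gamma_{(k)}$ into its mean plus a centred part. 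Since $K_{n}/\zeta(\tau_{n})\to 0$ (as $a_{n}/\zeta(\tau_{n})\to 0$), a telescoping estimate gives $\big|\mathbb{E}\gamma_{(k)}-k/\zeta(\tau_{n})\big|=\sum_{i=1}^{k}\tfrac{i}{\zeta(\tau_{n})(\zeta(\tau_{n})-i)}=O\big(K_{n}^{2}/\zeta(\tau_{n})^{2}\big)$ uniformly in $k\le K_{n}$, so the rescaled bias is $O\big(T^{2}a_{n}/\zeta(\tau_{n})\big)\to 0$; and $k\mapsto\gamma_{(k)}-\mathbb{E}\gamma_{(k)}$ is an $L^{2}$-martingale, so Doob's maximal inequality bounds $\mathbb{E}\big[\max_{k\le K_{n}}(\gamma_{(k)}-\mathbb{E}\gamma_{(k)})^{2}\big]$ by $4\sum_{i\le K_{n}}(\zeta(\tau_{n})-i)^{-2}\le 4K_{n}/(\zeta(\tau_{n})-K_{n})^{2}$, so the rescaled fluctuation is $O\big(\sqrt{T/a_{n}}\big)\to 0$; Markov's inequality then closes the argument. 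The main obstacle I expect is precisely this last step: one needs the estimate uniformly over $k\le K_{n}$, not merely for a fixed $t$, which is exactly what the maximal inequality combined with Rényi's representation delivers. It is also worth stressing that it is essential that $J_{1}$-proximity is only tested on compacts, since $|\lambda_{n}(t)-t|$ does \emph{not} vanish uniformly in $t$: the last arrival time $r_{n,\zeta(\tau_{n})-1}$ is of order $\zeta(\tau_{n})\log\zeta(\tau_{n})/a_{n}$, far larger than $(\zeta(\tau_{n})-1)/a_{n}$.
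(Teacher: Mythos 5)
Your argument is correct and, structurally, it mirrors the paper's: both exploit the coupling in which the uniform edge ordering of Definition~\ref{Def2} is the one induced by sorting the exponential clocks of Definition~\ref{Def3}, so that the two processes differ only by a random time change (your $\lambda_n$, the paper's $\kappa_{n,\lfloor\,\cdot\, a_n\rfloor}$), and both reduce the claim to the locally uniform convergence in probability of that time change to the identity. Where you part ways is in establishing this uniform convergence. The paper first proves pointwise convergence $\kappa_{n,\lfloor sa_n\rfloor}\to s$ in probability via a Chebyshev estimate on an associated counting process $N_n$ and an inversion step, and then upgrades pointwise to uniform-on-compacts by invoking the monotonicity of $s\mapsto\kappa_{n,\lfloor sa_n\rfloor}$ together with a P\'olya/Dini-type lemma from Gut's book. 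You instead obtain uniformity in one shot, by combining R\'enyi's representation $\gamma_{(k)}=\sum_{i\le k}E_i/(\zeta(\tau_n)-i)$ with Doob's $L^2$-maximal inequality applied to the centred martingale $k\mapsto\gamma_{(k)}-\mathbb{E}\gamma_{(k)}$. Your route is more quantitative (it yields explicit rates of order $T^2 a_n/\zeta(\tau_n)$ for the bias and $\sqrt{T/a_n}$ for the fluctuation), whereas the paper's is slightly shorter and softer; both are valid. Your closing observation that uniformity in $t$ cannot hold on all of $\R_+$, since the last arrival time is of order $\zeta(\tau_n)\log\zeta(\tau_n)/a_n\gg(\zeta(\tau_n)-1)/a_n$, is a useful remark that the paper leaves implicit in its reliance on the $J_1$ metric of $\mathbf{D}(\R_+,\bL(\bar{\mathbb{D}}))$.
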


\begin{proof}
For $1 \leq k \leq \zeta(\tau_{n})-1$, let $\kappa_{n,k}$ be the time at which the $k$-th chord is added in $\left(\bL_{ta_{n}/\zeta(\tau_{n})}^{\rm d}(\tau_{n} ), t \geq 0 \right)$. Then, 
\begin{align*}
\bL_{ta_n}( \tau_{n}) =  \bL_{\frac{ \kappa_{n, \lfloor ta_{n} \rfloor}a_n}{\zeta(\tau_{n})}}^{\rm d}(\tau_{n} ), \hspace*{3mm} \text{for all} \hspace*{2mm} t \geq 0.
\end{align*}

\noindent Thus, our claim follows from \cite[Theorem 3.9]{Billi1999}, \cite[Theorem 3.1]{Whitt1980} and \cite[Theorem 1.14 in Chapter VI]{jacod2003} provided that, for each $t \geq 0$,
\begin{equation}
\label{eq:tau2}
\sup_{s \in [0,t]} \left| \kappa_{n, \lfloor s a_n \rfloor} - s \right| \xrightarrow[ ]{\mathbb{P}} 0, \hspace*{3mm} \text{as} \hspace*{2mm} n \rightarrow \infty.
\end{equation}

Let $\gamma_1, \dots, \gamma_{\zeta(\tau_{n})-1}$ be the i.i.d.\ exponential random variables of parameter $1$ of Definition \ref{Def3}, and define the process
\begin{eqnarray*}
N_{n}(s) = \frac{1}{a_{n}} \sum_{i=1}^{\zeta(\tau_{n})-1} \mathbf{1}_{\{E_{i} \leq s a_{n}/\zeta(\tau_{n}) \}}, \hspace*{2mm} \hspace*{2mm} s \geq 0. 
\end{eqnarray*}

\noindent An application of the Chebyshev inequality shows that, for $s \geq 0$, $N_{n}(s) \rightarrow s$, in probability, as $n \rightarrow \infty$. On the other hand, observe that $\kappa_{n, \lfloor s a_n \rfloor} = \inf\{u \geq 0: a_{n} N_{n}(u) \geq \lfloor s a_n \rfloor \}$. Then, by inversion, we have that $\kappa_{n, \lfloor s a_n \rfloor}  \rightarrow s$, in probability, as $n \rightarrow \infty$. Moreover, since $\kappa_{n, \lfloor s a_n \rfloor}$ is non-decreasing as a function of $s$, we obtain (\ref{eq:tau2}); see e.g. \cite[Lemma 2.2 in Chapter 5]{Gut2009}.
\end{proof}

\subsubsection{Reduced tree and reduced lamination built from discrete trees}

Fix $q \geq 1$ (an integer) and let $u_{1}, \dots, u_{q}$ be $q$ i.i.d.\ uniform random vertices of a rooted plane tree $\tau$. The reduced tree $\tau^{(q)}$ of $\tau$ is obtained by keeping only the root $\emptyset$ of $\tau$, these $q$ vertices and the branching points (if any), i.e.\ the vertices $w \in \tau$ such that $\llbracket \emptyset, u_{i} \rrbracket \cap \llbracket \emptyset, u_{j} \rrbracket = \llbracket \emptyset, w \rrbracket$ for some $1 \leq i < j \leq q$. Then one puts an edge between two vertices of $\tau^{(q)}$ if one is the ancestor of the other in $\tau$, and there is no other vertex of $\tau^{(q)}$ inbetween. The length of an edge $e$ in $\tau^{(q)}$ is defined as the number of edges between the vertices of $\tau$ corresponding to the endpoints of $e$. The tree $ \tau^{(q)}$ is rooted at $\emptyset$ and has a plane structure induced by that of $\tau$; see Figure \ref{FigureReduce}. Note that its number of vertices is a priori random. 

\begin{figure}[!htb]
\center
\begin{tabular}{c c c}
\begin{tikzpicture}
\draw (-1.25,3) -- (-1,2) -- (-.5,1) -- (0,0) -- (.5,1) -- (1,2) -- (1,3) (-.75,3) -- (-1,2) (-.5,2) -- (-.5,1) -- (-.2,2) (.5,2) -- (.5,1) -- (.2,2) -- (0,3) (0,0) -- (0,1) (.2,2) -- (.4,3);
\draw[fill=black] (-1.25,3) circle (.1);
\draw[fill=black] (-1,2) circle (.1);
\draw[fill=black] (-.5,1) circle (.1);
\draw[fill=black] (0,0) circle (.1);
\draw[fill=black] (.5,1) circle (.1);
\draw[fill=red] (-.5,2) circle (.1);
\draw[fill=black] (1,2) circle (.1);
\draw[fill=red] (1,3) circle (.1);
\draw[fill=red] (-.75,3) circle (.1);
\draw[fill=black] (.5,2) circle (.1);
\draw[fill=black] (-.2,2) circle (.1);
\draw[fill=black] (.4,3) circle (.1);
\draw[fill=black] (0,3) circle (.1);
\draw[fill=black] (0,1) circle (.1);
\draw[fill=red] (.2,2) circle (.1);
\end{tikzpicture}
&
\begin{tikzpicture}
\draw[white] (0,0) -- (3,0);
\end{tikzpicture}
&
\begin{tikzpicture}
\draw (-.5,1) -- (-.25,2) (-.75,2) -- (-.5,1) -- (0,0) -- (.5,1) -- (.2,2)  (.5,1) -- (1,2);
\draw (-.5,.5) node{$1$};
\draw (.5,.5) node{$1$};
\draw (.15,1.5) node{$1$};
\draw (-.85,1.5) node{$2$};
\draw (1,1.5) node{$2$};
\draw (-.2,1.5) node{$1$};
\draw[fill=black] (0,0) circle (.1);
\draw[fill=black] (-.5,1) circle (.1);
\draw[fill=black] (.5,1) circle (.1);
\draw[fill=black] (-.25,2) circle (.1);
\draw[fill=black] (-.75,2) circle (.1);
\draw[fill=black] (-.25,2) circle (.1);
\draw[fill=black] (.2,2) circle (.1);
\draw[fill=black] (1,2) circle (.1);
\end{tikzpicture}
\end{tabular}
\caption{A tree $\tau$ with $4$ marked vertices, and the reduced tree $\tau^{(4)}$ with its edge lengths.}
\label{FigureReduce}
\end{figure}
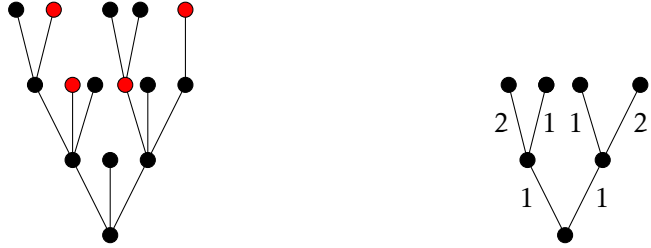

The notion of reduced tree naturally translates in the lamination setting into the notion of reduced lamination. Suppose that $\tau^{(q)}$ has exactly $q$ leaves. Let $u_{0,q} \coloneqq \emptyset$ and  $u_{1,q}, \dots, u_{q,q}$ be the $q$ leaves of $\tau^{(q)}$ listed in lexicographical order.  Let $a_{1}, \dots, a_{q}$ be $q$ i.i.d.\ uniform random points on the unit circle $\mathbb{S}^{1}$. We define $a_{0,q}=1$, and let $a_{1,q}, \dots, a_{q,q}$ be these $q$ points sorted in clockwise order. Set $A_{q} = \{a_{0,q}, a_{1,q}, \dots, a_{q,q} \}$ and $E_{q} = \{ u_{0,q}, u_{1,q}, \dots, u_{q,q} \}$. Observe that removing any edge of $\tau^{(q)}$ splits $E_{q}$ into two subsets, which are made of consecutive elements of $E_{q}$ in lexicographical order (up to cyclic shift), corresponding to two subsets of $A_{q}$ of consecutive points. We now associate to a reduced tree $\tau^{(q)}$ a lamination $\bL(\tau^{(q)})$ as follows:

\begin{definition}[Discrete reduced laminations] \label{Def4}
By convention, if $\tau^{(q)}$ does not have exactly $q$ leaves, we set $\bL(\tau^{(q)}) \coloneqq \bS^1$. Otherwise, for each $0 \leq i \leq j \leq q$, denote by $A_{i,j}$ the event that there exists an edge in $\tau^{(q)}$ splitting the set $E_{q}$ into $\{u_{i+1,q}, \dots, u_{j,q} \}$ and $E_{q} \backslash \{ u_{i+1,q}, \ldots, u_{j,q} \}$. Define $y_{i,q} := e^{-2i\pi \left(a_{i,q}+a_{i+1,q}\right)/2}$ and let $$\bL(\tau^{(q)}) := \bS^1 \cup \{ [y_{i,q}, y_{j,q}] \, | \, \, A_{i,j} \text{ holds}\}.$$ 
\end{definition} 

It is clear that $\bL(\tau^{(q)})$ is a lamination satisfying the following property: for any $0 \leq i < j \leq q$, there exists a chord in $\bL(\tau^{(q)})$ between the open arcs $(a_{i,q}, a_{i+1,q})$ and $(a_{j,q}, a_{j+1,q})$ (with the convention that $a_{q+1,q}=a_{0,q}$), if and only if $A_{i,j}$ holds.

We then associate to $\tau^{(q)}$ a random lamination-valued process. Recall that $\ell(e)$ is defined as the number of edges between the vertices of $\tau$ corresponding to the endpoints of $e$. Equip the edges of $\tau$ with i.i.d.\ exponential random variables of parameter $1$, and for each edge $e$ of $\tau^{(q)}$, denote by $\gamma_e$ the minimum of the $\ell(e)$ exponential random variables associated to the edges of $\tau$ between the endpoints of $e$. In particular, $(\gamma_{e}, e \in \tau^{(q)})$ is a sequence of independent exponential random variables of respective parameters $(\ell(e), e \in \tau^{(q)})$. 

\begin{definition}[Discrete reduced lamination-valued process] \label{Def5}
Consider the lamination $\mathbb{L}(\tau^{(q)})$. We define the reduced lamination-valued process $(\mathbb{L}_{t}^{(q)}(\tau), t \geq 0)$ from $\mathbb{L}(\tau^{(q)})$. Specifically, $\mathbb{L}_{t}^{(q)}(\tau)$ is the unit circle $\mathbb{S}^{1}$ if $\tau^{(q)}$ does not have exactly $q$ leaves. However, if $\tau^{(q)}$ has precisely $q$ leaves, then $\mathbb{L}_{t}^{(q)}(\tau)$ is the union of $\mathbb{S}^{1}$ and the set of chords of $\mathbb{L}(\tau^{(q)})$ corresponding to an edge $e$ if and only if $\gamma_{e} \leq t$.
\end{definition}

In particular, if $\tau^{(q)}$ has precisely $q$ leaves, then this process interpolates between $\mathbb{S}^{1}$ ($t=0$) and the lamination $\mathbb{L}(\tau^{(q)})$ ($t \rightarrow \infty$).

\subsection{The continuum setting} \label{continuumlamination}

In this section, we define lamination-valued processes associated to  so-called plane continuum random trees. Let us first recall the notion of $\R$-tree. A metric space $(\mathcal{T},r)$ is an $\mathbb{R}$-tree, if for every $x,y \in \mathcal{T}$: (i) there exists a unique isometry $f_{x,y}: [0,r(x,y)] \rightarrow \mathcal{T}$ such that $f_{x,y}(0)=x$ and $f_{x,y}(r(x,y))=y$; (ii) for any continuous injective function $g: [0,1] \rightarrow \mathcal{T}$ such that $g(0)=x$ and $g(1)=y$, we have $g([0,1])=f_{x,y}([0,r(x,y)])$. The range of the mapping $f_{x,y}$ is the geodesic between $x,y \in \mathcal{T}$ and is denoted by $\llbracket x,y \rrbracket$. A point $x \in \mathcal{T}$ is called a leaf if $\mathcal{T} \backslash \{ x \}$ is connected, and a branching point if $\mathcal{T} \backslash \{ x \}$ has at least three disjoint connected components. We denote by ${\rm Lf}(\mathcal{T})$ the set of leaves of $\mathcal{T}$, by ${\rm Br}(\cT)$ its set of branching points, and by ${\rm Skel}(\mathcal{T}) \coloneqq \mathcal{T} \setminus {\rm Lf}(\mathcal{T})$ its skeleton. The distance $r$ in $\mathcal{T}$ induces a \textit{length measure} $\lambda_{r}$ on ${\rm Skel}(\mathcal{T})$ given by $\lambda_{r}(\llbracket x,y \rrbracket) = r(x,y)$ for all $x,y \in \mathcal{T}$. A rooted $\mathbb{R}$-tree $(\mathcal{T},r,\rho)$ is a $\mathbb{R}$-tree $(\mathcal{T},r)$ with a distinguished point $\rho \in \mathcal{T}$ called the root of $\mathcal{T}$. For $x,y \in \cT$, we define $x \wedge y$ as the most recent common ancestor of $x$ and $y$, that is, the unique $z \in \mathcal{T}$ such that $\llbracket \rho, z \rrbracket = \llbracket \rho, x \rrbracket \cap \llbracket \rho, y \rrbracket$. Finally, for each $x \in \cT$, let
\begin{align*}
\cT(x) \coloneqq \{y \in \mathcal{T}: x \in \llbracket \rho, y \rrbracket \}
\end{align*}
\noindent be the subtree of $\cT$ above $x$ rooted at $x$.

\begin{definition} \label{Def6}
A (rooted) continuum tree is a quadruple $(\mathcal{T},r,\rho, \mu)$, where $(\mathcal{T},r,\rho)$ is a rooted $\mathbb{R}$-tree and $\mu$ is a non-atomic Borel probability measure on $\mathcal{T}$ such that $\mu({\rm Lf}(\mathcal{T})) =1$ and for every non-leaf vertex $x \in \mathcal{T}$,  $\mu(\mathcal{T}(x))  >0$. We call $\mu$ the \textit{mass measure} of $\cT$.
\end{definition}

In \cite{AldousIII}, Aldous makes slightly different definitions of these quantities which, in particular, restricts his discussion to binary trees, but the theory can be easily extended. Note that the definition of a continuum tree implies that the $\mathbb{R}$-tree $\mathcal{T}$ satisfies certain extra properties; for example, ${\rm Lf}(\mathcal{T})$ must be uncountable, have no isolated point and $\lambda_r$ must be $\sigma$-finite. In what follows, $\mathcal{T}$ will always denote a continuum tree $(\mathcal{T},r,\rho, \mu)$.
The following result is well known when the space is locally compact, see e.g. \cite[Lemma $3.1$]{DW07}.

\begin{lemma} \label{lem:bp}
The set of branching points ${\rm Br}(\cT)$ of a continuum tree $\mathcal{T}$ is at most countable. 
\end{lemma}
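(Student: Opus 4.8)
The plan is to associate to each branching point a rational-coordinate "witness" in a countable structure, so that distinct branching points receive distinct witnesses. First I would recall that a branching point $x$ is a point such that $\mathcal{T} \setminus \{x\}$ has at least three connected components; call two of these the "left" and "right" components (in whatever order), and note that each component, being a nonempty open subset of the separable metric space $\mathcal{T}$, contains a point of a fixed countable dense set $\mathcal{D} \subseteq \mathcal{T}$. So for each branching point $x$, one can choose a triple $(d_1, d_2, d_3)$ of points in $\mathcal{D}$ lying in three distinct components of $\mathcal{T} \setminus \{x\}$.

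The key step is to show that this assignment is injective (up to the finitely many choices): if $x \neq y$ are both branching points and $(d_1, d_2, d_3)$ is a triple of points of $\mathcal{D}$ separated by $x$, then $(d_1,d_2,d_3)$ cannot also be separated by $y$. Indeed, in an $\mathbb{R}$-tree the point that separates three given points $d_1, d_2, d_3$ into three distinct components — equivalently, the unique point lying on all three geodesics $\llbracket d_i, d_j \rrbracket$ — is uniquely determined, being the \emph{median} $m(d_1, d_2, d_3)$ of the three points. (This uses the standard fact that $\llbracket d_1, d_2 \rrbracket \cap \llbracket d_1, d_3 \rrbracket = \llbracket d_1, m \rrbracket$ for a unique $m$, which follows from properties (i)--(ii) of an $\mathbb{R}$-tree in Section~\ref{continuumlamniation}.) If $x$ separates $d_1,d_2,d_3$ into three distinct components, then $x$ lies on each geodesic $\llbracket d_i, d_j \rrbracket$ but is not an endpoint, so $x = m(d_1,d_2,d_3)$; the same applies to $y$, forcing $x = y$. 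Hence the map sending each branching point to (one of its) separating triples is injective, and since $\mathcal{D}^3$ is countable, the set of branching points is at most countable.

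The main obstacle — really the only nontrivial point — is establishing the uniqueness of the median: that in an $\mathbb{R}$-tree the intersection of geodesics $\llbracket d_1, d_2 \rrbracket \cap \llbracket d_1, d_3 \rrbracket$ is itself a geodesic $\llbracket d_1, m \rrbracket$ with $m$ the unique point simultaneously on all of $\llbracket d_1, d_2\rrbracket$, $\llbracket d_1, d_3\rrbracket$, $\llbracket d_2, d_3\rrbracket$. This is classical for $\mathbb{R}$-trees but I would include a short argument: the intersection is a closed connected subset of the geodesic $\llbracket d_1, d_2 \rrbracket$ (using that $\mathcal{T}$ is an $\mathbb{R}$-tree and hence $0$-hyperbolic, so geodesics that share an endpoint agree on an initial segment), hence equals $\llbracket d_1, m \rrbracket$ for a well-defined endpoint $m$; one then checks $m \in \llbracket d_2, d_3 \rrbracket$ and that $m$ is the only such point, so that if any point $x$ lies on all three geodesics it must equal $m$. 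With the median well-defined, the counting argument above finishes the proof.
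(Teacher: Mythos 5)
Your proof is correct in its main steps but takes a genuinely different route from the paper's. The paper's argument is measure-theoretic: it exploits the defining property of a continuum tree (Definition~\ref{Def6}) that the set of descendants of any non-leaf carries positive $\mu$-mass, deduces that removing a branching point yields several components of positive mass, and then applies a pigeonhole over the sets $B_k$ of branching points with three components of mass $> 1/k$. You instead argue purely topologically: fix a countable dense set $\mathcal{D} \subseteq \mathcal{T}$, assign to each branching point $x$ a triple of points of $\mathcal{D}$ separated by $x$, and observe that the median of three points in an $\mathbb{R}$-tree is unique, so that the triple determines $x$. This makes no use of $\mu$ whatsoever and in fact proves the statement for every separable $\mathbb{R}$-tree, which is a gain in generality; what it loses is the quantitative bound on $B_k$ that falls out of the paper's version. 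The median and injectivity steps are standard and correct as you present them, and the passage to a countable codomain $\mathcal{D}^3$ finishes it.

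The one point that should be addressed is the separability of $\mathcal{T}$, which you invoke at the outset but which is not an explicit hypothesis in Definition~\ref{Def6}. It does hold in the paper's setting, and it can be derived from the same mass condition the paper uses: the support of $\mu$ is a metric space with the countable chain condition, hence separable; every non-leaf of $\mathcal{T}$ lies on a geodesic $\llbracket \rho, s\rrbracket$ with $s \in \mathrm{supp}(\mu)$ because its descendant set has positive mass; and therefore $\mathcal{T}$ is the closure of the separable set $\bigcup_{s \in \mathrm{supp}(\mu)} \llbracket \rho, s\rrbracket$. But notice that this justification itself routes through the measure, i.e.\ through the very input the paper's one-line proof uses directly. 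Either include this step, or state separability as a standing assumption (it is natural here, since the continuum trees of interest are compact); with that, your proof is complete.
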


\begin{proof}
By definition, for any branching point $y$ of $\cT$, all connected components of $\cT \backslash \{y \}$ which do not contain the root have non-zero $\mu$-mass. For all $k \geq 1$, let $B_k$ be the set of branching points of $\cT$ such that at least two connected components of $\cT \backslash \{ y \}$ not containing the root have $\mu$-mass $>1/k$. Then, the number of points in $B_k$ has to be less than $k$. Our claim follows by taking the union over all $k \geq 1$.
\end{proof}

Lemma \ref{lem:bp} will become relevant later when we define a lamination-valued process on a continuum tree $\mathcal{T}$ by adding chords according to a Poisson point process on $\cT \times \R_+$ with intensity measure $\lambda_r \times dt$, where $dt$ is the Lebesgue measure on $\R_+$ (see Section \ref{sec:lamnoncompact}). Since $\lambda_r$ is the length measure on $\mathcal{T}$, Lemma \ref{lem:bp} ensures that no chord is associated with a branching point of $\mathcal{T}$. This is convenient because whenever a chord appears, removing the corresponding points in $\mathcal{T}$ splits it into exactly two connected components, and the connection to the fragmentation process arises from this splitting.

The continuum trees that we consider here are equipped with what we call a compatible total (or linear) order; see \cite{Duq06}.

\begin{definition} \label{Definitionorder}
We say that a relation $\leq$ on a probability space $\mathcal{X}$ is a total order if:
\begin{itemize}
\item For all $x \in \mathcal{X}$, $x \leq x$;
\item For all $x,y \in \mathcal{X}$, if $x \leq y$ and $y \leq x$, then $x=y$;
\item For all $x,y,z \in \mathcal{X}$, if $x \leq y$ and $y \leq z$, then $x \leq z$;
\item The set $\{ (x,y) \in \mathcal{X} \times \mathcal{X} \, : \, x \leq y \}$ is measurable.
\end{itemize}
\noindent A total order on a continuum tree $\cT$ is compatible with $(\cT,\rho, \mu)$ if the following hold:
\begin{itemize}
\item For all $x_1, x_2 \in \cT$, if $x_1 \in \llbracket \rho, x_2 \rrbracket$, then $x_1 \leq x_2$.
\item For $x_1, x_2, x_3 \in \cT$ such that $x_1 \leq x_2 \leq x_3$, if $y$ is the branching point of $x_1$ on the subtree spanned by $\rho, x_2$ and $x_3$ (i.e., $\llbracket \rho, y \rrbracket = \llbracket \rho, x_1 \rrbracket \cap (\llbracket \rho, x_2 \rrbracket \cup \llbracket \rho, x_3 \rrbracket)$), then  $y \in \llbracket \rho, x_2 \rrbracket$.
\item For any distinct $x_{1}, x_{2} \in \cT$ such that $x_{1} < x_{2}$, we have $\mu(\{ x \in \mathcal{T}: x_{1} < x < x_{2} \}) >0$.
\end{itemize}
\end{definition}

In what follows, unless specified, we always consider continuum trees endowed with a compatible linear order $\leq$.  Observe that if $\mu$ satisfies the last property in Definition \ref{Definitionorder}, then its topological support ${\rm Supp}(\mu)$ is equal to $\mathcal{T}$. In particular, by adapting e.g.\ \cite[Proposition 2.6]{Duq06}, a compatible linear order corresponds to a certain choice of orderings of the $I_x$'s, $x \in {\rm Br}(\cT) \cup \{ \rho \}$, where $I_x$ stands for the set of connected components of $\cT \backslash \{ x \}$ that do not contain the root $\rho$. By a small abuse of notation, we call \textit{plane continuum tree} a continuum tree endowed with a compatible linear order, that is, $(\cT, \leq)$, and call $\leq$ the lexicographical order on $\cT$.

\subsubsection{Laminations associated to plane continuum trees} \label{sec:lamnoncompact}

We show here how to associate to a plane continuum tree $(\cT, \leq)$ a lamination-valued process $(\bL_t(\cT), t \geq 0)$. In particular, this construction is valid even when the tree $\cT$ is not compact. For $x \in \cT$, set
\begin{align} \label{LeftRight}
G(x) \coloneqq  \mu \left( \{ y \in \cT: y \leq x \} \right), \quad  D(x) \coloneqq  \mu \left( \{ y \in \cT: y \leq x \} \cup \cT(x) \right),
\end{align}
\noindent and define the chord $c_x := [e^{-2i\pi G(x)}, e^{-2i\pi D(x)}]$. Note that the set $\{ y \in \cT: y \leq x \}$ is a Borel set (see for example, the representation in \cite[Proof of Proposition 2.10]{Duq06} which is valid even for non-compact plane continuum trees). Note also that if $x, x^{\prime} \in \mathcal{T}$ such that $x < x^{\prime}$, then the last property in Definition \ref{Definitionorder} implies that  $G(x) < G(x^{\prime})$. 

\begin{proposition}
For any plane continuum tree $(\cT, \leq)$, the set 
\begin{align*}
\bL(\cT) = \bS^1 \cup \overline{\bigcup_{x \in \cT} c_x}.
\end{align*}
\noindent is a lamination.
\end{proposition}

\begin{proof}
The set $\bL(\cT)$ is clearly closed. The only thing that we need to prove is that the chords do not cross. Take $x,y \in \cT$ such that $G(x) < G(y) < D(x)$. We want to prove that $D(y) \leq D(x)$. Since $\leq$ is a total order and $G(x) < G(y)$, we have that $x \leq y$. First, we claim that $x$ is an ancestor of $y$. Indeed, assume that $x \wedge y \neq x$ and take $z \in \cT(x)$. By definition, $x \leq z$. If $x \leq y \leq z$ then by definition $x := x \wedge z \in \llbracket \rho, y \rrbracket$, which contradicts our assumption. Hence, for all $z \in \cT(x)$, $z < y$. But then $D(x) \leq G(y)$ which contradicts our initial assumption. Therefore, $x$ is an ancestor of $y$. Now take a point $w \in \cT$ such that $x < w \leq y$. By Definition \ref{Definitionorder}, the branching point of $x$ on the subtree spanned by $\rho, w$ and $y$ is on $\llbracket \rho, w \rrbracket$. In particular, $w \in \cT(x)$. This shows that $\mu( \{ x^{\prime} \in \cT: x < x^{\prime} \leq y \}) \leq \mu(\cT(x))-\mu(\cT(y))$ (since $y \leq w$ for all $w \in \cT(y)$). This can be rephrased in $D(y) \leq D(x)$. Thus, $c_x$ and $c_y$ do not cross inside the disk.
\end{proof}

Let also $\Pi$ be a Poisson point process on $\cT \times \R_+$, with intensity measure $\lambda_r \times dt$, where $dt$ is the Lebesgue measure on $\R_+$. For any $t \geq 0$, set $\Pi_t := \{ x \in \cT : \exists s \leq t, (x,s) \in \Pi \}$. We define a lamination-valued process $(\bL_t(\cT), t \geq 0)$ by letting, for all $t \geq 0$:
\begin{align} \label{ContinuumLamination}
\bL_t(\cT) := \bS^1 \cup \overline{\bigcup_{x \in \Pi_t} c_x}.
\end{align}

\subsubsection{Compact continuum trees and excursion-type functions} \label{CompactSec}

We consider here a particular case of plane continuum trees, the compact ones. A common way to construct compact $\mathbb{R}$-trees is from continuous excursion-type functions, i.e.\ continuous functions $f:[0,1] \rightarrow \mathbb{R}_{+}$ such that $f(0) = f(1) =0$ and $f(x) \geq 0$ for all $0 \leq x \leq 1$. Let $f$ be such a function, consider the pseudo-distance on $[0,1]$,
\begin{eqnarray*}
r_{f}(x,y) \coloneqq f(x) + f(y) - 2 \inf_{z \in [x \wedge y, x \vee y]} f(z), \hspace*{3mm} \text{for} \hspace*{3mm} x,y \in [0,1],
\end{eqnarray*}

\noindent and define an equivalence relation on $[0,1]$ by setting $x \sim_{f} y$ if and only if $r_{f}(x,y) = 0$. The image of the projection $p_{f}:[0,1] \rightarrow [0,1] \setminus\sim_{f}$ endowed with the pushforward of $r_{f}$ (again denoted $r_{f}$), i.e. $\mathcal{T}_{f} = (\mathcal{T}_{f}, r_{f}, \rho_{f}) \coloneqq (p_{f}([0,1]), r_{f}, p_{f}(0))$, is a rooted plane $\mathbb{R}$-tree, with the linear order induced by the usual order on $[0,1]$ (we say that, for $x,y \in \cT_f$, $x \leq y$ if $\inf \{ p_f^{-1}(x) \} \leq \inf \{ p_f^{-1}(y) \}$); see \cite[Lemma 3.1]{EW2006}. In particular, $(\mathcal{T}_{f}, r_{f})$ is a compact and connected metric space. Conversely, it has been noted in \cite[Remark following Theorem 2.2]{Legall22005} (see also \cite[Corollary 1.2]{Duq06}) that for every compact $\mathbb{R}$-tree $(\mathcal{T}, r)$ there exists a continuous excursion-type function $f:[0,1] \rightarrow \mathbb{R}$ such that $(\mathcal{T}, r)$ and $(\mathcal{T}_{f}, r_{f})$, are isometric. We can endow $\mathcal{T}_{f}$ with the probability measure $\mu_{f}$ given by the pushforward of the Lebesgue measure on $[0,1]$ under the projection $p_{f}$. Suppose furthermore that the set of one-sided local minima of $f$ has Lebesgue measure $0$ (recall that $x \in [0,1]$ is a one-sided local minimum of $f$ if there exists $\varepsilon>0$ such that $f(x) = \inf\{f(y): x \leq y \leq x+\varepsilon \}$ or $f(x) = \inf\{f(y): x-\varepsilon \leq y \leq x \}$). Then, $\mu_{f}$ is a non-atomic measure and $\mu_{f}(\text{Lf}(\mathcal{T}_{f}))=1$; see \cite[Proof of Theorem 13]{AldousIII}. Moreover, $\mathcal{T}_{f} = (\mathcal{T}_{f}, r_{f}, \rho_{f}, \mu_{f})$ is a (rooted) continuum tree. 

In this setting, we can construct a lamination $\bL(f)$ and a lamination-valued process $(\bL_{t}(f), t \geq 0)$ associated to the continuous excursion-type function $f$ (and thus to $\mathcal{T}_{f}$). Let us recall the definition of $(\bL_{t}(f), t \geq 0)$ and refer to \cite{The19} for further details. First, define the epigraph of $f$ as the set of points below its graph, that is,
\begin{align*}
\cEG(f) \coloneqq \left\{ (x,y) \in \R^2: x \in (0,1), 0 \leq y < f(x) \right\}.
\end{align*}

\noindent To each $(x,y) \in \cEG(f)$, associate the chord $c(x,y) \coloneqq [ e^{-2\pi i g(x,y)}, e^{-2 \pi i d(x,y)} ] \in \bar{\mathbb{D}}$, where $g(x,y) \coloneqq \sup \{ z \leq x: f(z)<y \}$ and $d(x,y) \coloneqq \inf \{ z \geq x: f(z)<y \}$. Consider now a Poisson point process $\cN^{f}$ on $\R^2 \times \R_{+}$ with intensity measure
\begin{align*}
\frac{1}{d(x,y)-g(x,y)} \mathbf{1}_{\{ (x,y) \in \cEG(f) \}} dx dy ds.
\end{align*}
\noindent Here, $ds$ denotes the Lebesgue measure on $\R_+$, while $dx$ and $dy$ both denote the Lebesgue measure on $\R$. For $t \geq 0$, consider also the Poisson point process $\cN^{f}_{t}(\cdot) \coloneqq \cN^{f}(\cdot \times [0,t])$ on $\cEG(f)$ and construct the lamination-valued process $(\bL_{t}(f),t \geq 0)$ associated to $f$ as follows. For all $t \geq 0$,
\begin{align*}
\bL_{t}(f) = \bS^1 \cup \overline{\bigcup_{(x,y) \in \cN_{t}^{f}} c(x,y)}.
\end{align*}

\noindent Clearly, this process is non-decreasing for the inclusion. Moreover, define 
\begin{align*}
\bL_{\infty}(f) \coloneqq  \bL(f) = \overline{\bigcup_{t \geq 0} \bL_{t}(f)}.
\end{align*}
It is straightforward that chords of the lamination $\bL_\infty(f)$ are in bijection with points of $\cT_f$. In particular it is useful to define the process directly from $\cT_f$. For all $x \in \cT_f$, recall the definition of $G(x)$ in \eqref{LeftRight} and observe that $G(x) := \inf \{p_f^{-1}(x) \}$.

\begin{proposition} \label{identitycouplingLam}
We have that $( \bL_{t}(\cT_{f}), t \geq 0 ) \overset{d}{=} (\bL_{t}(f), t \geq 0 )$.
\end{proposition}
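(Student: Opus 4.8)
The plan is to realize both lamination-valued processes as the image of a single Poisson point process under the \emph{same} deterministic map, after transporting the intensity measure between the epigraph $\cEG(f)$ and the skeleton ${\rm Skel}(\cT_f)$ via an explicit correspondence.

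First, for $(x,y) \in \cEG(f)$, let $\Phi(x,y) \in {\rm Skel}(\cT_f)$ denote the ancestor of $p_f(x)$ at $r_f$-distance $y$ from the root $\rho_f$. Concretely, writing $\{z : f(z) > y\} = \bigsqcup_j (g_j(y), d_j(y))$ for the excursion intervals of $f$ above level $y$, one has $\Phi(x,y) = p_f(g_j(y)) = p_f(d_j(y))$ for the index $j$ with $x \in (g_j(y), d_j(y))$, and for such $x$ one checks that $g(x,y) = g_j(y)$ and $d(x,y) = d_j(y)$ for every $y$ outside a Lebesgue-null set of levels, the exceptions being the (countably many, by Lemma \ref{lem:bp}) heights of branching points and the levels at which two excursions of $f$ abut, the latter being controlled by the assumption that the one-sided local minima of $f$ form a Lebesgue-null set. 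Since $\mu_f$ is the pushforward of Lebesgue measure under $p_f$ and, for a non-branching skeleton point $z = \Phi(x,y)$ with excursion interval $(g_j(y),d_j(y))$, one has $C_z^g = p_f([0,g_j(y)))$ and $C_z^d = p_f((d_j(y),1])$, it follows that $\mu_f(C_z^g) = g_j(y)$ and $\mu_f(C_z^d) = 1-d_j(y)$; hence the chord $c_z$ of Definition \ref{def1} attached to $z$ equals $[e^{-2\pi i g(x,y)}, e^{-2\pi i d(x,y)}] = c(x,y)$, the chord of the epigraph construction attached to $(x,y)$.

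The key step is the identity $\Phi_{*}\!\left(\tfrac{1}{d(x,y)-g(x,y)}\,\mathbf{1}_{\cEG(f)}\,dx\,dy\right) = \lambda_{r_f}$, where $\lambda_{r_f}$ is the length measure of $\cT_f$. I would prove it by Fubini: for a Borel set $A \subseteq \cT_f$,
\begin{align*}
\int_{\cEG(f)} \frac{\mathbf{1}_{\{\Phi(x,y) \in A\}}}{d(x,y)-g(x,y)}\,dx\,dy \;=\; \int_0^\infty \Bigg(\sum_{j\,:\,z_j(y)\in A}\ \int_{g_j(y)}^{d_j(y)} \frac{dx}{d_j(y)-g_j(y)}\Bigg)dy \;=\; \int_0^\infty \#\{z \in A : r_f(\rho_f,z)=y\}\,dy,
\end{align*}
where $z_j(y) := p_f(g_j(y))$ and the middle equality uses the identifications of the previous paragraph; the right-hand side equals $\lambda_{r_f}(A)$ by the standard description of the length measure of an $\R$-tree coded by a continuous excursion (see e.g.\ \cite{Duq06}).

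Finally, by the mapping theorem for Poisson point processes, the image of $\cN^f$ under $(x,y,s)\mapsto(\Phi(x,y),s)$ is a Poisson point process on $\cT_f \times \R_+$ with intensity $\lambda_{r_f}\times ds$, that is, it has the same law as the process $\Pi$ of Definition \ref{Def7} for $\cT = \cT_f$. Almost surely $\cN^f$ charges only points $(x,y)$ with $\Phi(x,y)$ a non-branching skeleton point, so by the first paragraph $\bigcup_{(x,y)\in\cN^f_t} c(x,y) = \bigcup_{(x,y)\in\cN^f_t} c_{\Phi(x,y)}$ for every $t \geq 0$; consequently $(\bL_t(f),t\geq 0)$ is the same deterministic functional of $\cN^f$ that $(\bL_t(\cT_f),t\geq 0)$ is of $\Pi$, and the equality in distribution of the two processes follows. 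I expect the main obstacle to be the null-set bookkeeping in the first step: checking that, almost surely, the Poisson process meets no level or point where the identification $g(x,y)=g_j(y)$, $d(x,y)=d_j(y)$ or the non-branching property of $\Phi(x,y)$ breaks down — precisely where the hypothesis on one-sided local minima of $f$ and Lemma \ref{lem:bp} enter.
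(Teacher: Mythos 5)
Your proof is correct and follows essentially the same route as the paper: the paper's map $w(x,y) = p_f(g(x,y))$ is exactly your $\Phi(x,y)$, the paper's key identity $\int_{E(x',y')} \frac{1}{d(x,y)-g(x,y)}\mathbf{1}_{\cEG(f)}\,dx\,dy = \lambda_{r}(\llbracket x', y' \rrbracket)$ is the same pushforward statement you prove (the paper checks it on geodesics, you on general Borel sets), and both conclude via the Poisson mapping theorem. You are somewhat more explicit than the paper about the Fubini computation behind that identity and about the null-set bookkeeping (branching-point heights, one-sided local minima), but this is elaboration of the same argument rather than a different one.
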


\begin{proof}
The idea consists in coupling $( \bL_{t}(\cT_{f}), t \geq 0)$ and $(\bL_{t}(f), t \geq 0)$. For any $(x,y) \in \cN^{f}$, let $w(x,y) \in \cT_{f}$ be the equivalence class of $g(x,y)$ with respect to $\sim_{f}$. Then, the chord $c(x,y)$ is exactly the chord $c_{w(x,y)}$. Thus, we only need to check that the image of $\cN^f$ under the projection $p_{f}$ is a Poisson point process on ${\rm Skel}(\cT_f)$ with the correct intensity. To this end, observe that, for any $x^{\prime}, y^{\prime} \in \cT_{f}$, we have that
\begin{align*}
\int_{E(x^{\prime}, y^{\prime})} \frac{1}{d(x,y)-g(x,y)} \mathbf{1}_{\{ (x,y) \in \cEG(f) \}} dx dy = \lambda_{r}(\llbracket x^{\prime}, y^{\prime} \rrbracket),
\end{align*}
\noindent where $E(x^{\prime}, y^{\prime}) \coloneqq \{ (x,y) \in \R^2: w(x,y) \in \llbracket x^{\prime}, y^{\prime} \rrbracket \}$ (this representation can be found, for example, in \cite[Example 4.34]{Evans60}). The result follows.
\end{proof}

\begin{remark}
In fact, under the natural coupling defined in the proof of Proposition \ref{identitycouplingLam}, the lamination-valued processes $( \bL_{t}(\cT_{f}), t \geq 0 )$ and $(\bL_{t}(f), t \geq 0 )$ are almost surely equal.
\end{remark}

In particular, $(\bL_{t}(\cT_f), t \geq 0)$ is non-decreasing and it interpolates between $\bS^1$ ($t=0$) and $\bL(\cT_{f})$ ($t \rightarrow \infty$). Indeed, by \cite[Proposition $2.2$ (ii)]{The19}, we have that $\bL_t(\cT_{f}) \rightarrow \bL(\cT_{f})=\bL_\infty(f)$, as $t \rightarrow \infty$, on  $(\mathbb{L}(\bar{\mathbb{D}}), d_{\rm H})$, whenever $\cT$ is compact.

\subsubsection{Reduced tree and reduced lamination built from continuum trees}

For $q \geq 1$ (an integer), let $x_{1}, \dots, x_{q}$ be $q$ i.i.d.\ random leaves of a  plane continuum tree $\mathcal{T}$ sampled from its mass measure $\mu$. Observe that they are a.s.\ all distinct, and set $E_q \coloneqq \{ x_{i}: 1 \leq i \leq q \} \cup \{ \rho \}$. The reduced tree $\cT^{(q)}$ of $\mathcal{T}$ is the rooted plane tree with edge lengths whose vertices are the leaves $x_1, \ldots, x_q$, the root $\rho$ of $\mathcal{T}$ and all branching points. The length of an edge is simply the length measure of the unique geodesic path in $\cT$ between the corresponding endpoints. 

We can also define the notion of reduced lamination and reduced lamination-valued process in the continuum setting. For $x \in \mathcal{T}$, recall the definition of $G(x)$ in \eqref{LeftRight}. Let $x_{0,q} = \rho$ be the root of $\cT^{(q)}$ and $x_{1,q}, \dots x_{q,q}$ be its $q$ leaves listed in lexicographical order. Set $a_{j,q} \coloneqq e^{-2i\pi G(x_{j,q})}$, for $0 \leq j \leq q$ (in particular, $a_{0,q}=1$). The following result must be clear since $\mu$ is non-atomic.

\begin{lemma}
\label{lem:measure}
If $x \in {\rm Lf}(\mathcal{T})$ is distributed according to $\mu$, then $G(x)$ is uniformly distributed on $[0,1]$.
\end{lemma}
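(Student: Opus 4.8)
The plan is to determine the law of $L(X)$, for $X$ a $\mu$-distributed leaf, by computing all of its moments and recognising them as the moments of the uniform distribution on $[0,1]$. First note that, since $\mu$ is a Borel probability measure with $\mu({\rm Lf}(\mathcal{T})) = 1$, we have $\mu({\rm Skel}(\mathcal{T})) = 0$, and hence for every $x \in \mathcal{T}$
\[
\mu\big( \{ y \in \mathcal{T} : y \preceq x \} \big) = \mu\big( \{ y \in {\rm Lf}(\mathcal{T}) : y \preceq x \} \big) = L(x).
\]
Thus $L$ is defined on all of $\mathcal{T}$; it is measurable (the lexicographical order $\preceq$ is a Borel relation on the Polish space $\mathcal{T}$ — for instance by transporting it through any excursion-type coding of $\mathcal{T}$ as in Section~\ref{CompactSec} — so $x \mapsto \mu(\{y : y \preceq x\})$ is measurable), and $U \coloneqq L(X)$ is a genuine $[0,1]$-valued random variable. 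Let $\nu$ denote its law, a probability measure on $[0,1]$.

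Fix an integer $k \geq 1$ and let $Y_{1}, \dots, Y_{k}$ be i.i.d.\ of law $\mu$, independent of $X$. Since $\mu$ is non-atomic, $(\mu \otimes \mu)(\{(x,y) : x = y\}) = 0$, so the $k+1$ points $X, Y_{1}, \dots, Y_{k}$ are pairwise distinct almost surely, and — $\preceq$ being a total order — exactly one of them is almost surely the $\preceq$-largest. Conditionally on $X$, the $Y_{i}$ are still i.i.d.\ of law $\mu$, so $\mathbb{P}(Y_{i} \preceq X \mid X) = \mu(\{y : y \preceq X\}) = U$ for each $i$, and by conditional independence $\mathbb{P}(Y_{1} \preceq X, \dots, Y_{k} \preceq X \mid X) = U^{k}$. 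Taking expectations and using that the law of $(X, Y_{1}, \dots, Y_{k})$ is invariant under permutations of the coordinates — so each of these $k+1$ i.i.d.\ points is the $\preceq$-largest with the same probability $\tfrac{1}{k+1}$ —
\[
\mathbb{E}[U^{k}] = \mathbb{P}\big( Y_{1} \preceq X, \dots, Y_{k} \preceq X \big) = \mathbb{P}\big( X \text{ is the } \preceq\text{-largest of } X, Y_{1}, \dots, Y_{k} \big) = \frac{1}{k+1}.
\]

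Since $\mathbb{E}[U^{k}] = \tfrac{1}{k+1}$ for all $k \geq 0$ and $U$ takes values in the bounded interval $[0,1]$, the law $\nu$ is determined by this moment sequence (polynomials are dense in $C([0,1])$, so the moments fix $\int g\, d\nu$ for all continuous $g$); as the uniform distribution on $[0,1]$ has exactly these moments, $\nu$ is uniform on $[0,1]$, which is the assertion. There is no substantial obstacle in this argument: the only points I would take care to spell out are the measurability of the event $\{Y \preceq X\}$ in the product $\sigma$-algebra (immediate from the Borel nature of $\preceq$) and the almost-sure distinctness of finitely many independent $\mu$-samples (immediate from the non-atomicity of $\mu$).
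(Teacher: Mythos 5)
The paper does not actually give a proof of this lemma: it simply prefaces the statement with the remark that ``The following result must be clear since $\mu$ is non-atomic.'' Your moment argument supplies a correct and complete proof of what the authors leave implicit, and it uses the hypothesis exactly where it is needed: non-atomicity of $\mu$ gives the almost-sure distinctness of the $k+1$ i.i.d.\ samples, so that exactly one of them is the $\preceq$-maximum, and exchangeability then forces $\mathbb{E}[U^k]=\tfrac{1}{k+1}$; since $U\in[0,1]$, the Weierstrass/Hausdorff moment argument identifies the law as uniform. An alternative route, probably the one the authors had in mind, is the classical distribution-function transform: $L$ is the ``CDF'' of $\mu$ with respect to the total order $\preceq$, and non-atomicity means this CDF has no jumps, so its push-forward of $\mu$ is the Lebesgue measure on $[0,1]$. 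That version is shorter but requires a little point-set care about the range of $L$ and its flat intervals; your moment argument sidesteps this entirely at the modest cost of invoking moment determinacy on a compact interval. One small caveat: for the measurability of $L$, the appeal to the excursion coding of Section~\ref{CompactSec} only literally applies to compact continuum trees, whereas the lemma is stated in general; in the non-compact case one should instead argue directly that the set $\{(x,y):y\preceq x\}$ is Borel in $\mathcal{T}\times\mathcal{T}$ (the order is built from geodesics and branching points, which are Borel functions of the metric). This is a minor wrinkle, and the paper does not address it either.
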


As a consequence, for all $q \geq 1$, if the $q$ leaves of $\mathcal{T}$ are sampled in an i.i.d.\ way according to $\mu$, then $a_{1,q}, \ldots, a_{q,q}$ are the order statistics of $q$ i.i.d.\ uniform variables on the unit circle.

Analogously to the discrete case, we now associate to $\cT^{(q)}$ a lamination $\bL(\cT^{(q)})$ which satisfies the following property: for $0 \leq i<j \leq q$, there exists a chord in $\bL(\cT^{(q)})$ between open arcs $(a_{i,q}, a_{i+1,q})$ and $(a_{j,q}, a_{j+1,q})$ (with the convention that $a_{q+1,q}=a_{0,q}$) if and only if there exists an edge in $\cT^{(q)}$ splitting $E_{q}$ into $\{ x_{i+1, q}, \dots, x_{j, q} \}$ and $E_{q} \backslash \{ x_{i+1, q}, \dots, x_{j,q} \}$ (denote this last event by $A_{i,j}$).

\begin{definition}[Continuum reduced lamination] \label{Def8}
For each $0 \leq i \leq q$, let $y_{i,q} := e^{-2i\pi \left(a_{i,q}+a_{i+1,q}\right)/2}$. We let $$\bL(\cT^{(q)}) := \bS^1 \cup \{ [y_{i,q}, y_{i+1,q}] \, | \, \, A_{i,j} \text{ holds}\}.$$
\end{definition}

It is clear that $\bL(\cT^{(q)})$ is a lamination satisfying the property that we want. Let us now state and prove a result which will be useful in what follows.

\begin{lemma} \label{lem:repartitionunifcircle}
For all $a,a' \in \bS^1$, let $d(a,a^{\prime})$ denote the length of the shortest arc from $a$ to $a^{\prime}$ in $\mathbb{S}^{1}$. For all $\varepsilon>0$,
\begin{align*}
\lim_{q \rightarrow \infty} \mathbb{P}\left( \sup_{1 \leq j \leq q} d \left(a_{j,q}, e^{-2\pi i j/q}\right) < \varepsilon \right) = 1.
\end{align*}
\end{lemma}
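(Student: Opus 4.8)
The statement is essentially the classical fact that the order statistics of $q$ i.i.d.\ uniform points on a circle are, after rescaling, close to the equally spaced points $j/q$. The plan is to reduce everything to uniform variables on $[0,1]$ and then invoke a Glivenko--Cantelli-type argument. By Lemma \ref{lem:measure} and the remark following it, the points $a_{1,q}, \dots, a_{q,q}$ are the order statistics of $q$ i.i.d.\ uniform random variables on the unit circle; equivalently, writing $a_{j,q} = e^{-2\pi i U_{(j)}}$, the $U_{(1)} \leq \cdots \leq U_{(q)}$ are the order statistics of i.i.d.\ uniforms $U_1, \dots, U_q$ on $[0,1]$. Since for $a = e^{-2\pi i s}$ and $a' = e^{-2\pi i s'}$ one has $d(a,a') \leq 2\pi |s - s'|$ (in fact $d(a,a') = 2\pi \, \|s-s'\|$ where $\|\cdot\|$ denotes distance to the nearest integer), it suffices to show
\begin{align*}
\lim_{q \rightarrow \infty} \mathbb{P}\left( \sup_{1 \leq j \leq q} \left| U_{(j)} - \frac{j}{q} \right| \geq \frac{\varepsilon}{2\pi} \right) = 0.
\end{align*}

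The key step is to recognize the left-hand quantity as (a small modification of) the Kolmogorov--Smirnov statistic. Let $F_q(t) = q^{-1} \sum_{i=1}^q \mathbf{1}_{\{U_i \leq t\}}$ be the empirical distribution function of the $U_i$'s. A standard identity relates the sup-distance of order statistics from the regular grid to $\|F_q - \mathrm{id}\|_\infty$: since $U_{(j)} \leq t$ if and only if $F_q(t) \geq j/q$, one gets $\sup_{1 \leq j \leq q} |U_{(j)} - j/q| \leq \sup_{t \in [0,1]} |F_q(t) - t| + 1/q$. By the Glivenko--Cantelli theorem, $\sup_{t \in [0,1]} |F_q(t) - t| \to 0$ almost surely as $q \to \infty$, hence also in probability; adding the deterministic $1/q \to 0$ term finishes the bound. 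Combining this with the inequality $d(a_{j,q}, e^{-2\pi i j/q}) \leq 2\pi |U_{(j)} - j/q|$ yields the claim. (If one prefers a quantitative route, the Dvoretzky--Kiefer--Wolfowitz inequality gives $\mathbb{P}(\|F_q - \mathrm{id}\|_\infty \geq \delta) \leq 2 e^{-2q\delta^2}$, which makes the convergence explicit and even summable in $q$.)

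There is essentially no serious obstacle here; the only point requiring a modicum of care is the passage between the circle and the interval. One must check that the shortest-arc distance $d(e^{-2\pi i s}, e^{-2\pi i s'})$ is bounded by $2\pi|s-s'|$ for $s, s' \in [0,1]$ — which is immediate since the arc-length metric on $\bS^1$ is the pushforward of the standard metric on $\R/\Z$ and the map $s \mapsto e^{-2\pi i s}$ is $2\pi$-Lipschitz — and that identifying $a_{1,q}, \dots, a_{q,q}$ as order statistics of uniform points is legitimate; this last point is exactly the content of Lemma \ref{lem:measure} together with the i.i.d.\ sampling of the leaves $x_{1,q}, \dots, x_{q,q}$ of $\cT^{(q)}$ from $\mu$. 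Once these identifications are in place, the Glivenko--Cantelli step is routine.
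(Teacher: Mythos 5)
Your proof is correct, and it takes a genuinely different (and in fact more classical) route than the paper's. You reduce the claim to the Glivenko--Cantelli theorem: identifying $a_{j,q} = e^{-2\pi i U_{(j)}}$ with the order statistics of i.i.d.\ uniforms $U_1, \dots, U_q$, you bound $\sup_j |U_{(j)} - j/q|$ by the Kolmogorov--Smirnov statistic (up to an $O(1/q)$ correction, which is in fact superfluous since $F_q(U_{(j)}) = j/q$ exactly, but harmless), and then use the $2\pi$-Lipschitz map $s \mapsto e^{-2\pi i s}$ to transfer the bound to the arc-length metric on $\bS^1$. The paper instead partitions $\bS^1$ into $K$ arcs of equal length $2\pi/K$ with $2/K < \varepsilon$, observes that the number of sample points in each arc is $\mathrm{Binomial}(q, 1/K)$, controls the deviation of these counts from $q/K$ via Hoeffding's inequality, and then localizes each $a_{j,q}$ to within one neighbouring arc of $e^{-2\pi i j/q}$. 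Both proofs are complete. Yours invokes a single well-known theorem and gives the uniform convergence in one stroke (and, as you note, DKW makes it quantitative with exponential tails), whereas the paper's argument is more elementary and self-contained but requires the explicit partition/union-bound bookkeeping. Either way, the reduction to the order statistics of i.i.d.\ uniforms via Lemma \ref{lem:measure} is the shared first step, and the rest is a question of taste.
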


\begin{proof}
Fix $\varepsilon>0$, choose an integer $K \geq 1$ such that $2/K<\varepsilon$ and take $q \geq K^{8}$. Let $A_q := \{a_{j,q}, 0 \leq j \leq q \}$. Split the unit circle into the $K$ arcs of the form, $(e^{-2\pi i (k-1)/K}, e^{-2\pi i k/K})$ for $1 \leq k \leq K$. For $1 \leq j \leq q$, almost surely no point of the form $a_{j,q}$ is one of the endpoints of these arcs. For any $q \geq 2$ and $1 \leq k \leq K$, denote by $A_q(k)$ the set $A_{q} \cap (e^{-2\pi i (k-1)/K}, e^{-2\pi i k/K})$, and $M_{q}(k)$ the number of points in $A_{q}(k)$. In particular, $M_{q}(k)$ is distributed as a binomial random variable with parameters $(q,1/K)$. Hoeffding's inequality implies that, for $q$ large enough:
\begin{eqnarray*}
\mathbb P ( |M_{q}(k) - q/K| \geq q^{3/4}) \leq 2 e^{ - 2 \sqrt{q}}.
\end{eqnarray*}

\noindent Hence, the probability that $|M_{q}(k) - q/K| \leq q^{3/4}$, for all $1 \leq k \leq K$, is at least $1-2Ke^{ - 2 \sqrt{q}}$ and our claim follows by choosing $K$ large enough (depending on $\varepsilon$). Indeed, suppose that $|M_{q}(k) - q/K| \leq q^{3/4}$, for all $1 \leq k \leq K$. Then, for any $1\leq k \leq K$ and any integer $(k-1)q/K < j \leq kq/K$, we necessarily have that $a_{j,q}$ is in $A_q(k-1), A_q(k)$ or $A_q(k+1)$ (with the convention that $A_{q}(0) = A_{q}(K)$ and $A_{q}(K+1) = A_{q}(1)$) and since $e^{-2\pi i j/q} \in A_q(k)$, the result in Lemma \ref{lem:repartitionunifcircle} holds.
\end{proof}

We now associate to $\cT^{(q)}$ a lamination-valued process. Recall that $\Pi$ denotes a Poisson point process on $\cT \times \R_+$, with intensity measure $\lambda_r \times dt$, where $dt$ is the Lebesgue measure on $\R_+$. For each edge $e \in \cT^{(q)}$, let $\gamma_{e}$ be the first time at which a point of $\Pi$ falls on the geodesic of $\mathcal{T}$ that corresponds to $e$. In particular, $\gamma_{e}$ is an exponential random variable of parameter $\ell(e)$ the length of $e$, and $(\gamma_{e}, e \in \mathcal{T}^{(q)})$ is a collection of independent random variables. 

\begin{definition}[Continuum reduced lamination-valued process] \label{Def9}
We define the process $(\bL_{t}^{(q)}(\cT), t \geq 0)$ from $\bL(\cT^{(q)})$ by letting $\bL_{t}^{(q)}(\cT)$ be the union of the unit circle $\mathbb{S}^{1}$ and the set of chords of $\bL(\cT^{(q)})$ corresponding to an edge $e$ if and only if $\gamma_{e} \leq t$.
\end{definition}

It turns out that these reduced processes actually approximate the usual lamination-valued process $(\bL_{t}(\mathcal{T}), t \geq 0)$.

\begin{proposition}
\label{prop:cvlamredcont}
Let $\mathcal{T}$ be a random rooted plane tree. Then,
\begin{align*}
(\bL_{t}^{(q)}(\mathcal{T}), t \geq 0) \xrightarrow[ ]{d} (\bL_{t}(\mathcal{T}), t \geq 0), \hspace*{3mm} \text{as} \hspace*{2mm}  q \rightarrow \infty, \hspace*{2mm} \text{in} \hspace*{2mm} \mathbf{D}(\mathbb{R}_{+}, \bL(\bar{\mathbb{D}})).
\end{align*}
\end{proposition}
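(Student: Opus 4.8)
The plan is to show two things: first, that the reduced lamination-valued process $(\bL_t^{(q)}(\cT), t \geq 0)$ is, for $q$ large, a good approximation of $(\bL_t(\cT), t\geq 0)$ in the sense that the Hausdorff distance between the two laminations stays uniformly small on compact time intervals with high probability; and second, that this approximation is compatible with the Skorokhod topology, i.e. that we can find a suitable time-change making the two processes uniformly close. Throughout I would work on a single probability space on which $\Pi$, the sampled leaves $x_1,\dots,x_q$, and the exponential clocks $\gamma_e$ are coupled in the natural way: the clock $\gamma_e$ attached to an edge $e$ of $\cT^{(q)}$ is the first time a point of $\Pi$ falls on the geodesic in $\cT$ corresponding to $e$, so that every chord present in $\bL_t^{(q)}(\cT)$ at time $t$ corresponds to a genuine cutpoint of $\cT$ that has been hit by $\Pi$ before time $t$. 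With this coupling, $\bL_t^{(q)}(\cT)$ is (up to the identification of arcs with masses) a sub-lamination of a process very close to $\bL_t(\cT)$.

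The first main step is a deterministic-geometry lemma: given $\varepsilon > 0$, using Lemma \ref{lem:cvlamcont} (or rather the finite-approximation statement \cite[Lemma 2.3]{The19} used in its proof), fix a deterministic $K_\varepsilon$ so that $\bL_t(\cT)$ is within $\varepsilon$ (in $d_{\rm H}$) of a sub-lamination with at most $K_\varepsilon$ chords, and moreover, since $\bL_\infty(\cT) = \bL(\cT)$, we may choose these $K_\varepsilon$ chords to be coded by finitely many points $z_1,\dots,z_{K_\varepsilon} \in {\rm Skel}(\cT)$ of strictly positive distance from $\rho$ and from each other. The point is that, almost surely, each $z_\ell$ has a non-degenerate geodesic segment just above it that is hit by $\Pi$ at some finite time; and on the event that $q$ is large enough that the sampled leaves "surround" each relevant subtree finely enough, the edge of $\cT^{(q)}$ containing $z_\ell$ has its clock $\gamma_e$ close to the hitting time of $z_\ell$ by $\Pi$, and the corresponding chord of $\bL(\cT^{(q)})$ is within $\varepsilon$ of $c_{z_\ell}$. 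Here Lemma \ref{lem:measure} and Lemma \ref{lem:repartitionunifcircle} are used to control the endpoints of the chords: since $L(x)$ is uniform and the $a_{j,q}$ are regularly spread, the arc $(a_{i,q}, a_{i+1,q})$ containing a given circle-point $e^{-2\pi i \mu(C_x^g)}$ shrinks as $q\to\infty$, so the chord associated in $\bL(\cT^{(q)})$ to the edge splitting the marked points across $z_\ell$ converges to $c_{z_\ell}$.

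The second main step converts this into Skorokhod convergence. Fix $T>0$ and $\varepsilon>0$. On the above high-probability event (for $q$ large), for every chord of $\bL_t(\cT)$ among the "important" $K_\varepsilon$ ones there is, at a slightly perturbed time, a matching chord of $\bL_t^{(q)}(\cT)$ that is $\varepsilon$-close; conversely every chord appearing in $\bL^{(q)}_t(\cT)$ before time $T$ corresponds to a point of $\Pi$ in ${\rm Skel}(\cT)$, hence a chord of $\bL_t(\cT)$, and almost surely (again by Lemma \ref{lem:cvlamcont}-type control, since $\Pi$ restricted to $\cT\times[0,T]$ has finitely many points projecting to chords of size $\geq\varepsilon$) only finitely many of these are of size $\geq \varepsilon$ and they are each $\varepsilon$-close to a chord of $\bL^{(q)}$. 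Building the time-change $\lambda_q\in\Theta$ from the (almost surely finite, distinct) ordered list of these finitely many relevant jump times of $\bL(\cT)$ on $[0,T]$ and the corresponding jump times of $\bL^{(q)}(\cT)$, interpolating linearly, and invoking the convergence $\gamma_{e}\to$ (the hitting time of the corresponding cutpoint) together with the fact that the remaining, small chords contribute at most $\varepsilon$ to $d_{\rm H}$ at all times, one gets $\sup_{t\leq T}\big(|\lambda_q(t)-t| \vee d_{\rm H}(\bL^{(q)}_{\lambda_q(t)}(\cT), \bL_t(\cT))\big)\leq C\varepsilon$ with probability tending to $1$. Since $T$ and $\varepsilon$ are arbitrary and $\mathbf{D}(\mathbb{R}_+, \bL(\bar\D))$ is Polish, this gives the claimed convergence in distribution (indeed in probability on the coupling space).

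The step I expect to be the main obstacle is the uniform-in-time control at the level of the Skorokhod topology, namely verifying that the matching of chords can be organised consistently so that a \emph{single} monotone time-change works simultaneously for all the relevant chords and that no spurious large chord appears in $\bL^{(q)}_t(\cT)$ that is not $\varepsilon$-close to something in $\bL_t(\cT)$. The geometric content — that sampling $q$ leaves eventually "resolves" each macroscopic chord of $\bL(\cT)$ with the right endpoints — is where Lemmas \ref{lem:bp}, \ref{lem:measure} and \ref{lem:repartitionunifcircle} do the real work, and care is needed because the number of chords of $\bL(\cT^{(q)})$ grows with $q$; the argument must show the "extra" chords are uniformly small. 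Handling the possible atoms of the mass measure at branching points is ruled out by Definition \ref{Def6} (non-atomic $\mu$), which is why all chord endpoints behave continuously under refinement.
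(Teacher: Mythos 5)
Your coupling is exactly the one the paper uses — take the exponential clock $\gamma_e$ attached to an edge $e$ of $\cT^{(q)}$ to be the first time a point of $\Pi$ lands on the geodesic of $\cT$ corresponding to $e$ — and you correctly identify Lemmas \ref{lem:measure} and \ref{lem:repartitionunifcircle} as supplying the geometric control on chord endpoints. However, the time-change machinery you then build is a detour that the coupling makes unnecessary, and it is where your sketch drifts into trouble. The paper's proof establishes the stronger statement $\sup_{t\in[0,M]}d_{\rm H}\bigl(\bL_t^{(q)}(\cT),\bL_t(\cT)\bigr)\to 0$ in probability as $q\to\infty$, and the Skorokhod convergence on $\mathbf{D}(\R_+,\bL(\bar{\D}))$ follows immediately with the \emph{identity} time-change, since uniform convergence on compacts dominates the $J_1$ metric. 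The mechanism is that for any $x\in\Pi_t$, either $x$ lies on no edge geodesic of $\cT^{(q)}$, in which case removing $x$ does not separate the sampled leaves and $c_x$ is necessarily $\varepsilon$-close to $\bS^1$ by the uniform repartition of the $a_{j,q}$; or $x$ lies on the geodesic of some edge $e$, and then $\gamma_e\leq t$ automatically, so the chord of $\bL_t^{(q)}(\cT)$ coding $e$ is already present \emph{at the same time $t$} and connects the same pair of arcs as $c_x$, hence is $\varepsilon$-close. The converse direction is symmetric: any chord of $\bL_t^{(q)}$ comes from an edge $e$ with $\gamma_e\leq t$, which by the coupling means some $x\in\Pi_t$ lies on $e$'s geodesic, and $c_x\in\bL_t(\cT)$ is $\varepsilon$-close. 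There is nothing to re-time.

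By contrast, your plan to pin down $K_\varepsilon$ ``important'' cutpoints $z_1,\dots,z_{K_\varepsilon}$ and then match their jump times runs into a conceptual snag: a single point $z_\ell\in{\rm Skel}(\cT)$ has $\lambda_r$-measure zero, so ``the hitting time of $z_\ell$ by $\Pi$'' is almost surely $+\infty$ and your claim that $\gamma_e$ converges to it does not parse. What you presumably want is a hitting time of a shrinking geodesic segment around $z_\ell$, but that is exactly the object whose behaviour you would then need to control uniformly over the (growing-with-$q$) set of edges of $\cT^{(q)}$ — which is the ``main obstacle'' you flag. In the paper's argument this obstacle evaporates because one never fixes cutpoints in advance: one sweeps over the Poisson atoms $x\in\Pi_M$ directly, and for each of them the matching chord of $\bL_t^{(q)}$ is already present by the time $x$ appears. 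So the gap in your proposal is not irreparable, but the time-change construction and the approximation by a fixed finite set of cutpoints are both superfluous and, as written, unsound; the coupling you describe in your first paragraph already does all the work if you exploit the deterministic implication ``$x\in\Pi_t$ on $e$'s geodesic $\Rightarrow\gamma_e\leq t$''.
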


\begin{proof}
We only need to prove that, for all $\varepsilon >0$ and for every $M >0$, 
\begin{eqnarray*}
\lim_{q \rightarrow \infty} \mathbb{P}\left( \sup_{t \in [0,M]} d_{\rm H} \left(\bL_{t}^{(q)}(\mathcal{T}), \bL_{t}(\mathcal{T}) \right) < \varepsilon \right) = 1. 
\end{eqnarray*}

Since the support of $\lambda_{r}$ is ${\rm Skel}(\cT)$, we only consider chords $c_{x}$ that are coded by a point $x \in {\rm Skel}(\cT)$. Moreover, by Lemma \ref{lem:bp}, a.s. no such $x$ is a branching point. It follows from Lemmas \ref{lem:measure} and \ref{lem:repartitionunifcircle} that for fixed $\varepsilon'>0$, we can and will choose $Q >0$ large enough such that, with probability at least $1-\varepsilon'$, for any $q \geq Q$, 
\begin{align}
\label{eq:distances}
\sup_{1 \leq j \leq q} d \left(a_{j,q}, e^{-2\pi i j/q}\right) < \varepsilon'. 
\end{align}

\noindent Fix $M >0$ and consider a point $x \in \Pi_{M}$. There are two cases: either $x$ falls in the geodesic of $\mathcal{T}$ that corresponds to an edge of $\cT^{(q)}$, or not. If $x$ does not fall in such a geodesic, then removing $x$ does not split the set of leaves of $\cT^{(q)}$ and thus necessarily $d_{\rm H}(c_x, \bS^1) < 2\pi (2 \varepsilon' + 1/Q)$ by \eqref{eq:distances}. Now, suppose that $x$ falls in such a geodesic. By definition, since $x \in \Pi_M$, there exists a chord $c' \in \bL_{M}^{(q)}(\cT)$ corresponding to a point in the same edge of $\cT^{(q)}$ as $x$. Thus, there exist two arcs $A_1, A_2$ between clockwise consecutive $a_{j,q}$'s such that $c_x$ and $c'$ connect $A_1$ and $A_2$. Hence, by \eqref{eq:distances} and e.g. \cite[Lemma $5.2$ (i)]{FLT21}, $d_{\rm H}(c',c_x) < 2\pi (2 \varepsilon' + 1/Q)$. Finally, our claim follows by choosing $\varepsilon'$ so that $2\pi (2 \varepsilon' + 1/Q) < \varepsilon$.
\end{proof}

\subsection{Convergence of the process of laminations} \label{proofLaminationT}

In this section, we prove Theorem \ref{thm:cvlamproc}, stating the equivalence between the planar version of the Gromov-weak convergence of trees and the convergence of their associated lamination-valued processes. We first need to introduce some notation and establish some additional geometric properties of reduced trees.

A rooted metric measure space is a quadruple $\mathcal{X} =(\mathcal{X}, r, \rho, \mu)$,  where $(\mathcal{X}, r)$ is a metric space such that $(\text{Supp}(\mu), r)$ is complete and separable, the so-called sampling measure $\mu$ is a finite measure on $(\mathcal{X}, r)$ and $\rho \in \mathcal{X}$ is a distinguished point which is referred to as the root; the support $\text{Supp}(\mu)$ of $\mu$ is defined as the smallest closed set $\mathcal{X}_{0} \subseteq \mathcal{X}$ such that $\mu(\mathcal{X}_{0}) = \mu(\mathcal{X})$. Two rooted metric measure spaces $(\mathcal{X}, r, \rho, \mu)$ and $(\mathcal{X}^{\prime}, r^{\prime}, \rho^{\prime}, \mu^{\prime})$ are said to be equivalent if there exists an isometry $\phi: {\rm Supp}(\mathcal{\mu}) \cup \{\rho\} \rightarrow {\rm Supp}(\mu^{\prime})\cup \{\rho^{\prime}\}$ such that $\phi(\rho) = \rho^{\prime}$ and $\phi_{\ast} \mu = \mu^{\prime}$, where $\phi_{\ast} \mu$ is the pushforward of $\mu$ under $\phi$. We denote by $\mathbb{K}_{0}$ the space of rooted metric measure spaces. We consider that $\mathbb{K}_{0}$ is equipped with the Gromov-weak topology; see Gromov's book \cite{Grom2007} or \cite{Greven2009, LWV}. In particular, the Gromov-weak topology is metrized by the so-called pointed Gromov-Prohorov metric $d_{\rm pGP}$. Moreover, $(\mathbb{K}_{0}, d_{\rm pGP})$ is a complete and separable metric space; see \cite[Proposition 2.6]{LWV}. Let us give a simple characterization for convergence in the Gromov-weak topology, see e.g.\ \cite{Greven2009, LWV}. For each $n \in \mathbb{N} \cup \{ \infty\}$, consider a rooted metric measure space $\mathcal{X}_{n} = (\mathcal{X}_{n}, r_{n}, \rho_{n}, \mu_{n})$, set $\xi_{n}(0)= \rho_{n}$ and $(\xi_{n}(i), i \geq 1)$ i.i.d.\ random variables sampled according to $\mu_{n}$. The convergence $\mathcal{X}_{n} \rightarrow \mathcal{X}_{\infty}$, as $n \rightarrow \infty$, for the Gromov-weak topology is equivalent to the convergence in distribution of the matrices
\begin{eqnarray} \label{GromovWC}
(r_{n}(\xi_{n}(i), \xi_{n}(j)): 0 \leq i,j \leq q) \xrightarrow[ ]{d} (r_{\infty}(\xi_{\infty}(i), \xi_{\infty}(j)): 0 \leq i,j \leq q), \hspace*{3mm} \text{as} \hspace*{2mm}  n \rightarrow \infty,
\end{eqnarray}

\noindent for every fixed integer $q \geq 1$. By Gromov's reconstruction theorem 
\cite[Subsection 3$\frac{1}{2}$.7]{Grom2007}, the distribution of $(r_{n}(\xi_{n}(i), \xi_{n}(j)):  i,j \geq 0)$ characterizes (the equivalence class of) $\mathcal{X}_{n}$. 

We consider the following notion of convergence that is in a sense a planar version of \eqref{GromovWC}.
\begin{definition}
\label{def:planarGW}
For $n \in \mathbb{N} \cup \{ \infty\}$, let $\mathcal{X}_{n} = (\mathcal{X}_{n}, r_{n}, \rho_{n}, \mu_{n})$ be a rooted metric measure space such that $\mathcal{X}_{n}$ is a totally ordered set (or linearly ordered set). Set, for all $n$, $\xi_{n}(0)= \rho_{n}$ and $(\xi_{n}(i), i \geq 1)$ i.i.d.\ random variables sampled according to $\mu_{n}$. Then, we say that $\mathcal{X}_n \rightarrow \mathcal{X}_{\infty}$, as $n \rightarrow \infty$, in the planar Gromov-weak sense if, for all fixed integer $q \geq 1$, 
\begin{eqnarray} \label{GromovWCplanar}
(r_{n}(\xi'_{n}(i), \xi'_{n}(j)): 0 \leq i,j \leq q) \xrightarrow[ ]{d} (r_{\infty}(\xi'_{\infty}(i), \xi'_{\infty}(j)): 0 \leq i,j \leq q), \hspace*{3mm} \text{as} \hspace*{2mm}  n \rightarrow \infty,
\end{eqnarray}
where $(\xi'_n(i))_{0 \leq i \leq q}, (\xi'_{\infty}(i))_{0 \leq i \leq q}$ are the respective order statistics of $(\xi_{n}(i))_{0 \leq i \leq q}, (\xi_{\infty}(i))_{0 \leq i \leq q}$.
\end{definition}

\noindent The measurability of the total order ensures that the order statistics of $q$ points are well-defined and measurable. Observe that, in particular, convergence in the planar Gromov-weak sense implies convergence for the Gromov-weak topology. A plane continuum tree $\mathcal{T} = (\mathcal{T}, r, \rho, \mu)$ is a particular case of rooted metric measure space. For $q \geq 1$, let $x_{1}, \dots, x_{q}$ be $q$ i.i.d.\ leaves sampled according to $\mu$. Let $x_{0,q} = \rho$ and $x_{1,q}, \dots, x_{q,q}$ be the $q$ leaves $x_{1}, \dots, x_{q}$ of $\mathcal{T}$ listed in lexicographical order. Set $E_{q} = \{x_{0,1}, \dots, x_{q,q}\}$. For $\omega \subset \{0, \dots, q\}$, let $\Xi_{q}(\omega)$ be the set of points in $\mathcal{T}$ (if any) whose removal separates the set $E_{q}$ into $E_{q}^{\omega,1} = \{ x_{k,q}: k \in \omega \}$ and $E_{q}^{\omega,2} = \{ x_{k,q}: k \not \in \omega \}$, such that each of these subsets, $E_{q}^{\omega,1}$ and $E_{q}^{\omega,2}$, is contained within a single connected component of $\mathcal{T}$.

The following two lemmas show that the sets of points whose removal splits the set of leaves of the reduced trees into two given subsets are either empty, or a geodesic corresponding to an edge of the reduced tree.

\begin{lemma} 
\label{lem:distances}
For $q \geq 1$ and $\omega \subset \{0, \dots, q\}$, we have that $\Xi_{q}(\omega)$ is either empty or a geodesic of $\cT$ that corresponds precisely to an edge $e_{\omega}$ in $\mathcal{T}^{(q)}$ of length $\ell(e_{\omega}) = \lambda_{r}(\Xi_{q}(\omega)) >0$. Reciprocally, for every edge $e$ in $\mathcal{T}^{(q)}$ of length $\ell(e)$ there exists $\omega_{e} \subset \{0, \dots, q\}$ such that $\Xi_{q}(\omega_{e})$ is a geodesic of $\cT$ that corresponds precisely to $e$ such that $\lambda_{r}(\Xi_{q}(\omega_{e})) = \ell(e)$. 

Furthermore, for any points $x_1, x_2 \in E_{q}^{\omega,1}$ and $y_1, y_2 \in E_{q}^{\omega,2}$, define
\begin{align*}
f\left( x_1, x_2; y_1,y_2 \right) := \frac{r(x_1,y_1)+r(x_1,y_2)+r(x_2,y_1)+r(x_2,y_2)}{4} - \frac{r(x_1,x_2)+r(y_1,y_2)}{2}.
\end{align*}
Then $\Xi_{q}(\omega)$ is not empty if and only if $g(\omega) \coloneqq \min\{f(x_1,x_2;y_1,y_2): \, x_1,x_2 \in E_{q}^{\omega,1}, y_1, y_2 \in E_{q}^{\omega,2}\}>0$, in which case $g(\omega)=\ell(e_\omega)$.
\end{lemma}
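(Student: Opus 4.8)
The plan is to analyze the geometry of the reduced tree $\mathcal{T}^{(q)}$ and translate statements about ``points whose removal separates $E_q^{\omega,1}$ from $E_q^{\omega,2}$'' into statements about edges of $\mathcal{T}^{(q)}$. First I would recall that, since $\mathcal{T}$ is an $\mathbb{R}$-tree, for any two points $x \neq y$ the set $\mathcal{T} \setminus \llbracket x,y\rrbracket$ decomposes into connected components, and removing a point $z$ disconnects $\mathcal{T}$ into as many components as there are directions at $z$. Given $\omega \subset \{0,\dots,q\}$, a point $z$ lies in $\Xi_q(\omega)$ exactly when $\mathcal{T}\setminus\{z\}$ has the leaves of $E_q^{\omega,1}$ in one union of components and those of $E_q^{\omega,2}$ in the complementary union. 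I would first argue that if $\Xi_q(\omega) \neq \emptyset$, picking any $z_0 \in \Xi_q(\omega)$, every such $z$ must lie on the geodesic network spanned by $E_q$ (otherwise $z$ separates no two marked leaves, contradiction), and in fact $\Xi_q(\omega)$ is an arc: if $z, z'$ both separate the two families, then so does every point of $\llbracket z, z'\rrbracket$ (a point on this geodesic separates $z$ from $z'$, hence cannot put marked leaves on the ``wrong'' sides), and conversely the endpoints of the maximal such arc must be branching points of the spanned tree or marked leaves — i.e.\ vertices of $\mathcal{T}^{(q)}$ — because at an interior vertex of $\mathcal{T}^{(q)}$ the removal creates an extra component containing marked leaves, changing the partition. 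This identifies $\Xi_q(\omega)$ with (the interior of) a single edge $e_\omega$ of $\mathcal{T}^{(q)}$, and $\lambda_r(\Xi_q(\omega)) = \ell(e_\omega) > 0$ by definition of the length measure and of edge-length in the reduced tree. The reciprocal statement is immediate: given an edge $e$ of $\mathcal{T}^{(q)}$, removing any interior point of the corresponding geodesic splits $E_q$ into the marked leaves on each side of $e$, defining $\omega_e$, and then $\Xi_q(\omega_e) = $ that geodesic, of length $\ell(e)$.

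Next I would establish the four-point formula. The quantity $f(x_1,x_2;y_1,y_2)$ is a standard four-point expression: in an $\mathbb{R}$-tree it equals the length of the ``bridge'' between the geodesics $\llbracket x_1,x_2\rrbracket$ and $\llbracket y_1,y_2\rrbracket$, i.e.\ the length $\lambda_r(\llbracket p,p'\rrbracket)$ where $p$ is the median of $x_1,x_2,y_1$ (equivalently the point where $\llbracket x_1,x_2\rrbracket$ and $\llbracket y_1,y_2\rrbracket$ come closest), and $p'$ the analogous point on the other geodesic; this follows from the four-point condition characterizing $\mathbb{R}$-trees (see e.g.\ \cite{Duq06, Legall22005}) by a direct case check on which of the three tree-pairings of $\{x_1,x_2,y_1,y_2\}$ is the ``large'' one. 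Concretely, $f(x_1,x_2;y_1,y_2)$ is always $\geq 0$, and it is $>0$ precisely when the geodesics $\llbracket x_1,x_2\rrbracket$ and $\llbracket y_1,y_2\rrbracket$ are disjoint, in which case it is the distance between them.

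With these two ingredients the final claim falls out. If $\Xi_q(\omega) \neq \emptyset$, it is the edge $e_\omega$; for any $x_1,x_2 \in E_q^{\omega,1}$ and $y_1,y_2 \in E_q^{\omega,2}$ the geodesic $\llbracket x_1,x_2\rrbracket$ lies entirely on the $E_q^{\omega,1}$-side of $e_\omega$ and $\llbracket y_1,y_2\rrbracket$ on the other side, so these geodesics are disjoint and their bridge contains $\Xi_q(\omega)$; hence $f(x_1,x_2;y_1,y_2) \geq \ell(e_\omega) > 0$, with equality when $x_1,x_2$ (resp.\ $y_1,y_2$) are chosen so that their geodesic touches the endpoint of $e_\omega$ on their side — such a choice always exists because the edge $e_\omega$ is an actual edge of $\mathcal{T}^{(q)}$, so at each of its two endpoints there are at least two marked leaves of $E_q$ descending in distinct directions (or the endpoint is itself a marked leaf, in which case take $x_1=x_2$). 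Therefore $g(\omega) = \min f = \ell(e_\omega)$. Conversely, if $\Xi_q(\omega) = \emptyset$, then $E_q^{\omega,1}$ and $E_q^{\omega,2}$ cannot be separated by a single point, which forces the geodesics spanned by $E_q^{\omega,1}$ and by $E_q^{\omega,2}$ to intersect: indeed if the convex hulls (spanned subtrees) of $E_q^{\omega,1}$ and $E_q^{\omega,2}$ were disjoint, the geodesic joining them would contain a separating point. Choosing $x_1,x_2$ and $y_1,y_2$ realizing points of this intersection gives $f(x_1,x_2;y_1,y_2) = 0$, so $g(\omega) = 0$.

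The main obstacle I anticipate is the careful bookkeeping in the ``equality case'' — showing that the minimum defining $g(\omega)$ is actually attained at $\ell(e_\omega)$ and not something strictly larger. This requires knowing precisely which marked leaves sit in which subtree hanging off the two endpoints of $e_\omega$, and handling the degenerate cases where an endpoint of $e_\omega$ is one of the $x_{i,q}$ (so one must allow $x_1 = x_2$ in the minimum, making $\llbracket x_1,x_2\rrbracket$ a single point). A clean way to avoid fiddly casework is to prove the bridge-length identity $f(x_1,x_2;y_1,y_2) = \lambda_r(\mathrm{bridge})$ once and for all via the four-point condition, then phrase everything in terms of geodesics and their intersections inside the $\mathbb{R}$-tree, where the needed statements are transparent. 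The rest — that $\Xi_q(\omega)$ is an arc corresponding to an edge — is essentially the defining property of reduced trees and should be dispatched quickly by invoking the $\mathbb{R}$-tree structure.
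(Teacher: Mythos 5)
Your overall strategy is sound and differs from the paper's in a useful way: the paper works by fixing the endpoints $x,y$ of $e_\omega$ and directly expanding $f(x_1,x_2;y_1,y_2)$ in terms of distances to $x$ and $y$, then handles the converse implication $g(\omega)>0 \Rightarrow \Xi_q(\omega)\neq\emptyset$ by a somewhat delicate case analysis (singletons vs.\ non-singletons, with branchpoint comparisons). You instead invoke the four-point/bridge-length interpretation of $f$ once and for all, and prove the converse by contrapositive via intersecting convex hulls, which is conceptually cleaner. Both routes establish the first part (that $\Xi_q(\omega)$ is the interior of an edge of $\cT^{(q)}$) in essentially the same way.

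However, there is a genuine misstatement in your treatment of the four-point quantity. You assert that $f(x_1,x_2;y_1,y_2)\geq 0$ always, equaling the distance between $\llbracket x_1,x_2\rrbracket$ and $\llbracket y_1,y_2\rrbracket$. This is false in an $\R$-tree when the two geodesics overlap along an arc of positive length. Concretely, take an ``H''-shaped tree with branching points $p,p'$ at distance $\ell>0$, and hang $x_1,y_1$ off $p$ and $x_2,y_2$ off $p'$: then both $\llbracket x_1,x_2\rrbracket$ and $\llbracket y_1,y_2\rrbracket$ contain the bridge $\llbracket p,p'\rrbracket$, and a direct computation gives $f(x_1,x_2;y_1,y_2)=-\ell/2<0$. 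The identity ``$f=$ bridge length'' is only valid when the two geodesics are disjoint (or meet at a single point). Consequently, in your final step, showing that the convex hulls of $E_q^{\omega,1}$ and $E_q^{\omega,2}$ intersect only yields $g(\omega)\leq 0$, not $g(\omega)=0$: the minimum may be attained at a choice where the geodesics share an arc, in which case $f<0$. This does not affect the truth of the lemma (the ``iff $g(\omega)>0$'' equivalence survives, since $\Xi_q(\omega)\neq\emptyset$ forces disjointness of the relevant geodesics and hence $f\geq\ell(e_\omega)>0$), but the statement ``so $g(\omega)=0$'' should read ``so $g(\omega)\leq 0$,'' and the blanket claim $f\geq 0$ should be dropped or restricted to the case of disjoint geodesics.
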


\begin{proof}
For an edge $e$ in $\mathcal{T}^{(q)}$, we denote by $e_{g}$ the geodesic in $\mathcal{T}$ that corresponds to $e$. Let us first prove the first part. Consider $\omega \subset \{ 0, \ldots, q\}$ such that $\Xi_q(\omega)$ is not empty and recall that a point $x \in \Xi_q(\omega)$ splits $E_{q}$ into $E_{q}^{\omega,1}$ and $E_{q}^{\omega,2}$. Observe that $x$ cannnot be a branching point of $\cT^{(q)}$. Then, $x \in e_{g}$ for some edge $e$ of $\cT^{(q)}$. Take $y \in E_{q}^{\omega,1}$ and $y^{\prime} \in E_{q}^{\omega,2}$. By definition, $x \in \llbracket y, y^{\prime} \rrbracket$ (otherwise, $y$ and $y^{\prime}$ are in the same connected component of $\cT \backslash \{ x \}$). Since the geodesic $\llbracket y, y^{\prime} \rrbracket$ is injective and connects two vertices of $\cT^{(q)}$ in $\mathcal{T}$, we have $e_{g} \subset \llbracket y, y^{\prime} \rrbracket$. In particular, for any $x^{\prime} \in e_{g}$ and for any $y \in E_{q}^{\omega,1}$ and $y^{\prime} \in E_{q}^{\omega,2}$, we have that $x^{\prime} \in \llbracket y, y^{\prime} \rrbracket$. Thus, $e_{g} \subset \Xi_q(\omega)$.

Now, assume that there exists another edge $e^{\prime} \neq e$ such that $\hat{y} \in e_{g}^{\prime}$ and $\hat{y} \in \Xi_{q}(\omega)$. Choose $b \in \llbracket x, \hat{y} \rrbracket$ a branching point of $\mathcal{T}^{(q)}$, and let $z \in E_{q}$ such that it is not in the same connected component of $\cT \backslash \{ b \}$ as $x$ nor as $\hat{y}$ (if $b = \rho$, we take instead $z=b$). Assume without loss of generality that $z \in E_{q}^{\omega,1}$. Then, for any $z^{\prime} \in E_{q}^{\omega,2}$, $x$ and $\hat{y}$ both belong to $\llbracket z, z^{\prime} \rrbracket$, which contradicts the fact that $\llbracket z, z^{\prime} \rrbracket$ is a geodesic. Hence, $\Xi_{q}(\omega)=e_{g}$.

Conversely, by definition, it is not difficult to see that every edge of $\cT^{(q)}$ corresponds to one $\Xi_q(\omega)$, for some $\omega \subset \{ 0, \dots, q\}$. 

We now prove the second part. First, assume that $\Xi_{q}(\omega)$ is a geodesic that corresponds to an edge $e_{\omega}$ of $\mathcal{T}^{(q)}$ and let $x,y$ be its endpoints, so that $\ell(e_\omega)=r(x,y)$. For $z \in \Xi_{q}(\omega)$, $\cT \backslash \{z\}$ has two connected components, one of them containing $x$ and the other containing $y$. Without loss of generality, suppose that $x$ is in the connected component of all points of $E_{q}^{\omega,1}$, and $y$ is in the one of all points of $E_{q}^{\omega,2}$. Then, for any $x^{\prime} \in E_{q}^{\omega,1}$ and $y^{\prime} \in E_{q}^{\omega,2}$, we have $r(x^{\prime},y^{\prime})=r(x^{\prime},x)+r(x,y)+r(y,y^{\prime})$. Thus, for all $x_1,x_2 \in E_{q}^{\omega,1}$, $y_1, y_2 \in E_{q}^{\omega,2}$, we get that
\begin{align*}
f(x_1,x_2;y_1,y_2) &= \frac{2r(x_1,x)+2r(x_2,x)+4r(x,y)+2r(y_1,y)+2r(y_2,y)}{4} - \frac{r(x_1,x_2)+r(y_1,y_2)}{2}  \\ 
&= r(x,y)+\frac{r(x_1,x)+r(x_2,x)-r(x_1,x_2)}{2}+\frac{r(y_1,y)+r(y_2,y)-r(y_1,y_2)}{2}.
\end{align*}

\noindent By the triangle inequality, $f(x_1,x_2;y_1,y_2) \geq r(x,y)$. Furthermore, if $\omega$ and $\{ 0,\dots,q \} \backslash \omega$ both have at least two elements, then $x$ and $y$ are branching points of $\mathcal{T}^{(q)}$, and we can find $x_1,x_2 \in E_{q}^{\omega,1}$, $y_1,y_2 \in E_{q}^{\omega,2}$ such that $f(x_1,x_2;y_1,y_2)=r(x,y)$. Otherwise, if $\omega$ is a singleton, say $\{ i \}$, then $x=x_{i,q}$ and we can find two elements $y_1,y_2 \in E_{q}^{\omega,2}$ such that $f(x,x;y_1,y_2)=r(x,y)$. The case where $\{ 0,\dots, q \} \backslash \omega$ is a singleton is handled the same way. Hence, we have proved that if $\Xi(\omega)$ is not empty then $g(\omega)=\ell(e_\omega)>0$. 

Now we assume that $g(\omega)>0$. If $\omega$ is a singleton, say $E_{q}^{\omega,1} \coloneqq \{ x \}$, then for all $y_1, y_2 \in E_{q}^{\omega,2}$ such that $y_{1} \neq y_{2}$ we have
\begin{align*}
f(x,x;y_1,y_2) = \frac{r(x,y_1)+r(x,y_2)-r(y_1,y_2)}{2} \geq g(\omega).
\end{align*}
Hence, by letting $b$ be the branching point of $y_1,y_2$ and $x$ in $\cT^{(q)}$ (that is, $b= \llbracket x, y_1 \rrbracket \cap \llbracket x, y_2 \rrbracket \cap \llbracket y_1, y_2 \rrbracket$), we have $b \neq x$ and $r(x,b)\geq g(\omega)$. Remark that if $x = \rho$ then $x \neq b$ since a point of $\Xi_q(\omega)$ should belong to both $\llbracket \rho, y_1\rrbracket$ and $\llbracket \rho, y_2\rrbracket$. Then if $x$ is a leaf of $\cT^{(q)}$, for any edge $e \in \cT^{(q)}$ and $z \in e_{g}$ such that $r(x,z) \in (0,g(\omega))$, we see that $z \in \Xi_q(\omega)$. If $x=\rho$, any $z \in \llbracket \rho, y \rrbracket$ for some $y \in E_q^{\omega,2}$ such that $r(x,z) \in (0,g(\omega))$, we have that $z \in \Xi_q(\omega)$.

Finally, assume that $\omega$ and $\{ 0,\ldots, q \} \backslash \omega$ are not singletons. Fix $x_1, x_2 \in E_{q}^{\omega,1}$ such that $x_1 \neq x_2$. First, we prove that for all $y \in E_{q}^{\omega,2}$, the  branching points of $x_1,x_2$ and $y$ in $\mathcal{T}^{(q)}$ are the same. Indeed, let $y_1 \neq y_2$ be two elements of $E_{q}^{\omega,2}$ and $z_1,z_2$ their respective  branching points with $x_1$ and $x_2$. Then, if $z_1 \neq z_2$, using the fact that $z_1,z_2 \in \llbracket x_1, x_2 \rrbracket$, we get
\begin{align*}
f(x_1,x_2;y_1,y_2) = -r(z_1,z_2)<0.
\end{align*}

\noindent Let $a(x_1,x_2)$ be therefore the  branching point of $x_1,x_2$ and $y$ in $\mathcal{T}^{(q)}$, for all $y \in E_{q}^{\omega,2}$. Symmetrically, for any $y_1,y_2 \in E_{q}^{\omega,2}$, let $b(y_1,y_2)$ be the  branching point of $y_1,y_2$ and $x$ in $\mathcal{T}^{(q)}$, for all $x \in E_{q}^{\omega,1}$. If there exists $x_1 \neq x_2 \in E_{q}^{\omega,1}, y_1 \neq y_2 \in E_{q}^{\omega,2}$ such that $a(x_1,x_2)=b(y_1,y_2)$, then $f(x_1,x_2;y_1,y_2)=0$ which contradicts our assumption. Choose $x_1,x_2 \in E_{q}^{\omega,1}, y_1,y_2 \in E_{q}^{\omega,2}$ such that $r(a(x_1,x_2),b(y_1,y_2))>0$ is minimum, and take $z \in \llbracket a(x_1,x_2), b(y_1,y_2) \rrbracket \backslash \{a(x_1,x_2), b(y_1,y_2) \}$ . If $z \notin \Xi_{q}(\omega)$ then without loss of generality there exists $x \in E_{q}^{\omega,1}$ in the component of $\cT \backslash \{ z \}$ containing $y_1$ and $y_2$. But in this case $a(x_1,x) \in \llbracket a(x_1,x_2), b(y_1,y_2) \rrbracket \backslash \{a(x_1,x_2), b(y_1,y_2) \}$ which contradicts the minimality assumption. Thus, $z \in \Xi_{q}(\omega)$ which concludes the proof.
\end{proof}

The previous lemma admits a discrete counterpart. For $n \geq 1$, recall that a rooted plane tree $\tau_{n}$ can also be viewed as a rooted metric measure space $(\tau_{n}, r_{n}^{\text{gr}}, \emptyset_{n}, \mu_{n})$. For $q \geq 1$, let $u_{1}^{n}, \dots, u_{q}^{n}$ be $q$ i.i.d.\ random vertices sampled according to $\mu_{n}$. Let $u_{0,q}^{n} = \emptyset_{n}$ and $u_{1,q}^{n}, \dots, u_{q,q}^{n}$ be the $q$ vertices $u_{1}^{n}, \dots, u_{q}^{n}$ of $\tau_{n}$ listed in lexicographical order. Set $E_{q,n} = \{ u_{0,q}^{n}, \dots, u_{q,q}^{n}\}$. For $\omega \subset \{0, \dots, q\}$, let $\Xi_{q,n}(\omega)$ be the set of edges (if any) of $\tau_{n}$ splitting the set $E_{q,n}$ into $E_{q,n}^{\omega,1} = \{ u_{k,q}^{n}: k \in \omega \}$ and $E_{q,n}^{\omega,2} = \{ u_{k,q}^{n}: k \not \in \omega \}$. 

\begin{lemma}
\label{lem:discretedistances}
For $q \geq 1$ and $\omega \subset \{0, \dots, q\}$, we have that $\Xi_{q,n}(\omega)$ is either empty or a collection of neighbouring edges in $\tau_{n}$ that corresponds precisely to an edge $e_{\omega,n}$ in $\tau_{n}^{(q)}$ of length $\ell(e_{\omega, n})$ given by the number of edges in the set $\Xi_{q,n}(\omega)$. Reciprocally, for every edge $e_{n}$ in $\tau_{n}^{(q)}$ of length $\ell(e_{n})$ there exists $\omega_{e_{n}} \subset \{0, \dots, q\}$ such that $\Xi_{q,n}(\omega_{e_{n}})$ is a collection of neighbouring edges in $\tau_{n}$ that corresponds precisely to $e_{n}$ such that number of edges in the set $\Xi_{q,n}(\omega_{e_{n}})$ is equal to $\ell(e_{n})$. 

Furthermore, for any points $x_1, x_2 \in E_{q,n}^{\omega,1}$ and $y_1, y_2 \in E_{q,n}^{\omega,2}$, define
\begin{align*}
f_{n}\left( x_1, x_2; y_1,y_2 \right) \coloneqq \frac{r_{n}^{\rm gr}(x_1,y_1)+r_n^{\rm gr}(x_1,y_2)+r_n^{\rm gr}(x_2,y_1)+r_n^{\rm gr}(x_2,y_2)}{4} - \frac{r_n^{\rm gr}(x_1,x_2)+r_n^{\rm gr}(y_1,y_2)}{2}.
\end{align*}
Then $\Xi_{q,n}(\omega)$ is not empty if and only if $g_{n}(\omega) \coloneqq \min\{f_{n}(x_1,x_2;y_1,y_2): \, x_1,x_2 \in E_{q,n}^{\omega,1}, y_1, y_2 \in E_{q,n}^{\omega,2}\}>0$, in which case $g_{n}(\omega)=\ell(e_{\omega,n})$.
\end{lemma}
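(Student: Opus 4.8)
The plan is to deduce Lemma \ref{lem:discretedistances} as the discrete analogue of Lemma \ref{lem:distances}, essentially repeating the same combinatorial arguments with the graph distance $r_n^{\rm gr}$ replacing the $\R$-tree metric $r$ and with ``collection of neighbouring edges'' replacing ``geodesic''. The key observation is that a rooted plane tree $\tau_n$, viewed as a metric space by drawing a unit-length edge between each non-root vertex and its parent, is itself an $\R$-tree (its geometric realisation), and $\tau_n^{(q)}$ is exactly the reduced tree of this $\R$-tree with respect to the marked vertices $u_{1,q}^n, \dots, u_{q,q}^n$. In this realisation, an edge of $\tau_n^{(q)}$ of length $\ell$ corresponds to a path in $\tau_n$ made of $\ell$ consecutive graph-edges, which is precisely a ``collection of neighbouring edges''. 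So the first step is to make this identification explicit.

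Next I would carry over the first-part argument of Lemma \ref{lem:distances}. Given $\omega$ with $\Xi_{q,n}(\omega)$ non-empty, pick an edge $e \in \Xi_{q,n}(\omega)$; removing it disconnects $\tau_n$ into two components, one containing $E_{q,n}^{\omega,1}$ and the other $E_{q,n}^{\omega,2}$. For any $y \in E_{q,n}^{\omega,1}$, $y' \in E_{q,n}^{\omega,2}$, the edge $e$ lies on the geodesic $\llbracket y, y' \rrbracket$; since this geodesic is a simple path in $\tau_n$ joining two vertices of $\tau_n^{(q)}$, the whole edge of $\tau_n^{(q)}$ through $e$ lies on it, hence is contained in $\Xi_{q,n}(\omega)$. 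Conversely, using a branching point of $\tau_n^{(q)}$ on a hypothetical path between two edges of $\tau_n^{(q)}$ both lying in $\Xi_{q,n}(\omega)$, one derives a contradiction with the fact that geodesics in $\tau_n$ are simple paths, exactly as in the continuum proof. This shows $\Xi_{q,n}(\omega)$, when non-empty, equals the set of graph-edges composing a single edge $e_{\omega,n}$ of $\tau_n^{(q)}$, and its cardinality is $\ell(e_{\omega,n})$. The reciprocal (every edge of $\tau_n^{(q)}$ arises as some $\Xi_{q,n}(\omega)$) is immediate from the definition of the reduced tree.

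For the second part, the four-point function $f_n$ is the discrete version of $f$, and the computation is identical: if $\Xi_{q,n}(\omega)$ is the edge $e_{\omega,n}$ with endpoints $x,y$ (where $x$ is the endpoint on the $E_{q,n}^{\omega,1}$ side), then for $x' \in E_{q,n}^{\omega,1}$, $y' \in E_{q,n}^{\omega,2}$ one has $r_n^{\rm gr}(x',y') = r_n^{\rm gr}(x',x) + r_n^{\rm gr}(x,y) + r_n^{\rm gr}(y,y')$, and expanding $f_n$ gives $f_n(x_1,x_2;y_1,y_2) = r_n^{\rm gr}(x,y) + \tfrac12(r_n^{\rm gr}(x_1,x)+r_n^{\rm gr}(x_2,x)-r_n^{\rm gr}(x_1,x_2)) + \tfrac12(r_n^{\rm gr}(y_1,y)+r_n^{\rm gr}(y_2,y)-r_n^{\rm gr}(y_1,y_2)) \geq r_n^{\rm gr}(x,y) = \ell(e_{\omega,n}) > 0$, with equality attained by choosing $x_1,x_2$ (resp.\ $y_1,y_2$) to branch off at $x$ (resp.\ $y$), which is possible whether $\omega$ is a singleton, a co-singleton, or neither, by the same case analysis as in Lemma \ref{lem:distances}. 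The reverse implication $g_n(\omega)>0 \Rightarrow \Xi_{q,n}(\omega) \neq \emptyset$ is handled by the same branchpoint-minimality argument: assuming all branchpoints $a(x_1,x_2)$ on the $\omega$-side coincide and all branchpoints $b(y_1,y_2)$ on the complementary side coincide (each failure of coincidence forces some $f_n < 0$), one picks $x_1,x_2,y_1,y_2$ minimising $r_n^{\rm gr}(a(x_1,x_2),b(y_1,y_2))$ and checks that every vertex strictly between these two branchpoints, together with the edges along $\llbracket a(x_1,x_2), b(y_1,y_2) \rrbracket$, lies in $\Xi_{q,n}(\omega)$.

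I do not expect a genuine obstacle here, since the proof is purely a transcription of Lemma \ref{lem:distances} into the discrete setting; the only minor care needed is bookkeeping the distinction between edges of $\tau_n$ and edges (with integer lengths) of $\tau_n^{(q)}$, and noting that $\Xi_{q,n}(\omega)$ is declared to be a set of \emph{edges} of $\tau_n$ rather than of vertices, so the ``collection of neighbouring edges'' phrasing and the ``number of edges'' count match the length $\ell(e_{\omega,n})$. If one prefers to avoid re-proving everything, one can alternatively phrase the whole argument by applying Lemma \ref{lem:distances} directly to the geometric realisation $\R$-tree of $\tau_n$ equipped with the uniform measure on its $\zeta(\tau_n)$ vertices, and then translate the resulting geodesic $\Xi_q(\omega)$ back into the corresponding set of graph-edges of $\tau_n$; I would mention this shortcut but still spell out the four-point-function identity since it is what gets used later.
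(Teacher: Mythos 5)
Your proposal is correct and matches the paper's approach exactly: the paper's proof of Lemma \ref{lem:discretedistances} is the single sentence ``It follows as in the proof of Lemma \ref{lem:distances},'' and you have simply spelled out that transcription, correctly noting the identification of $\tau_n$ (with unit edge lengths) as an $\R$-tree, the shift from a set of points ($\Xi_q(\omega)$) to a set of edges ($\Xi_{q,n}(\omega)$), and the unchanged four-point computation.
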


\begin{proof}
It follows as in the proof of Lemma \ref{lem:distances}. 
\end{proof}

If $\Xi_{q,n}(\omega)$ is not empty, let $e_{\omega,n}$ be the corresponding edge of $\tau_{n}^{(q)}$ that splits $E_{q,n}$ into $E_{q,n}^{\omega,1}$ and $E_{q,n}^{\omega,2}$. Denote by $\ell(e_{\omega,n})$ the number of edges in $\Xi_{q,n}(\omega)$, that is, the length of the edge $e_{\omega,n}$, with the convention $\ell(e_{\omega,n}) = 0$ whenever $\Xi_{q,n}(\omega)$ is empty. If $\Xi_{q}(\omega)$ is not empty, let $e_{\omega}$ be the corresponding edge of $\mathcal{T}^{(q)}$ that splits $E_{q}$ into $E_{q}^{\omega,1}$ and $E_{q}^{\omega,2}$. Denote by $\ell(e_{\omega})$ the length of the geodesic $\Xi_{q}(\omega)$; with the convention $\ell(e_{\omega}) = 0$ whenever $\Xi_{q}(\omega)$ is empty. The following lemma, whose proof makes use of Lemmas \ref{lem:distances} and \ref{lem:discretedistances}, states the equivalence of the convergence of a sequence of discrete trees and the convergence of the lengths of edges of the reduced trees.

\begin{lemma}
\label{lem:ghcv}
Let $(\tau_{n}, n \geq 1)$ be a sequence of random rooted plane trees, $\cT$ be a random plane continuum tree and $(a_{n}, n \geq 1)$ a sequence of non-negative real numbers satisfying $a_{n} \rightarrow \infty$ and $\zeta(\tau_{n})/a_{n} \rightarrow \infty$, as $n \rightarrow \infty$. Then, the following assertions are equivalent:
\begin{itemize}
\item[(i)] $\displaystyle  \left(\tau_{n}, \frac{a_{n}}{\zeta(\tau_{n})} r_{n}^{\rm gr}, \emptyset_{n}, \mu_{n} \right) \rightarrow (\mathcal{T}, r, \rho, \mu)$, as $n \rightarrow \infty$, in the planar Gromov-weak sense.
\item[(ii)] For every integer $q \geq 1$ fixed, $\displaystyle \left (\frac{a_{n}}{\zeta(\tau_{n})} \ell(e_{\omega,n}): \omega \subset \{0, \dots, q\} \right) \xrightarrow[ ]{d} (\ell(e_{\omega}): \omega \subset \{0, \dots, q\})$, as $n \rightarrow \infty$.
\end{itemize} 
\end{lemma}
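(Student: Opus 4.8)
The plan is to reduce both assertions to the convergence in distribution of the finite matrices of pairwise distances between the root and $q$ i.i.d.\ sampled points, and then to invoke the continuous mapping theorem in each direction.

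First I would recall the characterisation \eqref{GromovWC}: assertion (i) holds if and only if, for every $q\geq 1$,
\begin{equation*}
\Big( \tfrac{a_n}{\zeta(\tau_n)}\, r_n^{\rm gr}(\xi_n(i),\xi_n(j)) \Big)_{0 \leq i,j \leq q} \xrightarrow{d} \big( r(\xi_\infty(i),\xi_\infty(j)) \big)_{0 \leq i,j \leq q},
\end{equation*}
where $\xi_n(0)=\emptyset_n$, $\xi_\infty(0)=\rho$ and $(\xi_n(i))_{i\geq1}$, $(\xi_\infty(i))_{i\geq1}$ are i.i.d.\ with respective laws $\mu_n$, $\mu$. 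Because $\zeta(\tau_n)\to\infty$, the sampled vertices are distinct with probability $1-o(1)$, and the leaves of $\mathcal{T}$ are a.s.\ distinct; on this event the sample can be relisted in lexicographic order, so that (by the usual order-statistics argument) the distance matrix above has, up to an independent uniform relabelling of the indices $1,\dots,q$, the same law as the distance matrix $\big(\tfrac{a_n}{\zeta(\tau_n)} r_n^{\rm gr}(u_{i,q}^n,u_{j,q}^n)\big)_{0\le i,j\le q}$ of the root together with the lexicographically ordered sample, and likewise in the continuum. It therefore suffices to establish the equivalence of (ii) with the convergence in distribution, for every $q$, of these ordered distance matrices.

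Next I would invoke Lemmas \ref{lem:distances} and \ref{lem:discretedistances} to identify the edge-lengths as an explicit, continuous functional of the ordered distance matrices: for every $\omega\subset\{0,\dots,q\}$ one has $\ell(e_{\omega,n}) = g_n(\omega)\vee 0$ with $g_n(\omega)=\min\{f_n(x_1,x_2;y_1,y_2): x_1,x_2\in E_{q,n}^{\omega,1},\, y_1,y_2\in E_{q,n}^{\omega,2}\}$, and $\ell(e_\omega)=g(\omega)\vee 0$ in the continuum, where $f_n$ and $f$ are one and the same fixed linear function of the matrix of pairwise distances. Hence the map $\Psi_q$ sending a symmetric $(q{+}1)\times(q{+}1)$ distance matrix to the vector $(g(\omega)\vee 0 : \omega\subset\{0,\dots,q\})$ is continuous (a positive part of a minimum of finitely many linear maps) and positively homogeneous; applying it to the two distance matrices above produces exactly the vectors appearing in (ii), the rescaling factor $a_n/\zeta(\tau_n)$ being pulled out by homogeneity. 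The continuous mapping theorem then gives that (i) implies (ii). For the converse I would use that the distance matrix is recovered linearly from the edge-lengths, namely $r_n^{\rm gr}(u_{i,q}^n,u_{j,q}^n) = \sum_{\omega\not\ni 0,\ |\omega\cap\{i,j\}|=1}\ell(e_{\omega,n})$ (an edge of the reduced tree separates the two sampled vertices exactly when exactly one of the corresponding indices lies in $\omega$, while the $\omega$'s not corresponding to an edge contribute $0$), and likewise in the continuum; applying the continuous mapping theorem to this linear map and passing from the ordered sample back to an i.i.d.\ one via the uniform relabelling, one obtains the convergence of the i.i.d.\ distance matrices, that is (i).

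The substantive point is the identification of $\ell(e_\omega)$ as a continuous functional of the finitely many pairwise distances, which is precisely what Lemmas \ref{lem:distances} and \ref{lem:discretedistances} provide; granting them, the proof is the continuous-mapping argument above. I expect the main obstacle to be the bookkeeping relating the i.i.d.\ and the lexicographically ordered samples, i.e.\ checking carefully that passing to the lexicographic order does not affect the convergence statements on the finite distance matrices.
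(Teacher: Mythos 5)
Your argument follows the same route as the paper's terse proof: characterise Gromov-weak convergence via (\ref{GromovWC}), read off the edge-lengths $\ell(e_\omega)$ as a continuous, positively homogeneous functional of the ordered distance matrix using Lemmas~\ref{lem:distances} and~\ref{lem:discretedistances}, and invert this through the linear identity $r(x_{i,q},x_{j,q}) = \sum_{\omega \in \Omega(i,j)} \ell(e_\omega)$. The continuous-mapping details you supply are correct and consistent with what the paper compresses into a single line.

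Where I would push back is on calling the i.i.d.-versus-lexicographic step ``bookkeeping''. Your observation that the i.i.d.\ distance matrix has the same law as the lex-ordered one composed with an independent uniform relabelling does settle the direction from (ii) to (i). But the direction from (i) to (ii) needs the converse, namely that convergence in law of the i.i.d.\ matrices forces convergence of the lex-ordered ones, and convolution with a uniform random permutation is not injective, so this is not a consequence of the relation you state. The underlying difficulty is that the lexicographic ordering of the sample depends on the plane structures of $\tau_n$ and of $\cT$, which the Gromov-weak topology does not see; the same metric measure space can carry several orders $\preceq$, and these can produce genuinely different laws for $(\ell(e_\omega))_\omega$. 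The paper's own proof is equally silent on this point, so your write-up is no worse than the original, but the step is not closed by what you wrote; some additional structure (e.g.\ exchangeability of the limit tree under permuting subtrees at branch points, which holds for the $\cT_\theta$ to which the lemma is actually applied) is needed to deconvolve the i.i.d.\ law into the lex-ordered one.
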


\begin{proof}[Proof of Lemma \ref{lem:ghcv}]
For all $0 \leq i,j \leq q$, let $\Omega_{n}(i,j)$ be the set of subsets of $\omega \subset \{0, \dots, q\}$ such that $u^{n}_{i,q} \in E_{q,n}^{\omega,1}$ and $u^{n}_{j,q} \in E_{q,n}^{\omega,2}$. Similarly, let $\Omega(i,j)$ be the set of subsets of $\omega \subset \{0, \dots, q\}$ such that $x_{i,q} \in E_{q}^{\omega,1}$ and $x_{j,q} \in E_{q}^{\omega,2}$. Our claim then follows from (\ref{GromovWCplanar}), Lemma \ref{lem:distances}, Lemma \ref{lem:discretedistances} and the identities 
\begin{align*}
r_{n}^{\text{gr}}(u^{n}_{i,q}, u^{n}_{j,q}) = \sum_{\omega \in \Omega_{n}(i,j)} \ell(e_{\omega, n}) \qquad \text{ and } \hspace*{5mm} r(x_{i,q}, x_{j,q}) = \sum_{\omega \in \Omega(i,j)} \ell(e_{\omega}).
\end{align*}
\end{proof}

The rest of the section is devoted to the proof of Theorem \ref{thm:cvlamproc}. Let us start by proving that \ref{D2} implies \ref{D1}. As a preparation step, we need the following proposition. 

\begin{proposition}
\label{prop:resultsofcvg}
In the setting of Theorem \ref{thm:cvlamproc}, suppose that \ref{D2} is satisfied. Then, we have that:
\begin{itemize}
\item[(i)] 
there exists a coupling between the lamination-valued processes such that
\begin{align*}
\limsup_{n \rightarrow \infty} \, d_{\rm Sk}^{\mathbb{L}} \left( \left(\bL_{\frac{ta_n}{\zeta(\tau_{n})}}^{(q)} (\tau_{n}), t \geq 0 \right),\left(\bL_{t}^{(q)}(\cT), t \geq 0\right) \right) \overset{\mathbb P}{\underset{q \rightarrow \infty}{\rightarrow}} 0;
\end{align*}

\item[(ii)] for any $\varepsilon > 0$, 
\begin{align*}
\lim_{q \rightarrow \infty} \liminf_{n \rightarrow \infty} \, \mathbb{P}\left( d_{\rm Sk}^{\mathbb{L}} \left( \left(\bL_{\frac{ta_n}{\zeta(\tau_{n})}}^{(q)} (\tau_{n}), t \geq 0 \right), (\bL_{t a_n}(\tau_{n}), t \geq 0) \right)  < \varepsilon \right) =1.
\end{align*}
\end{itemize}
\end{proposition}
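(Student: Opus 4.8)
The plan is to prove the two assertions separately: part~(i) by a direct Skorokhod coupling built on the convergence of reduced trees supplied by Lemma~\ref{lem:ghcv}, and part~(ii) by a chain of approximations linking the genuine lamination process to the reduced one through an intermediate ``skeleton'' process.

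\textbf{Part (i).} By Lemma~\ref{lem:ghcv}, assumption \ref{D2} is equivalent to the joint convergence, for every fixed $q$, of the rescaled reduced edge lengths $(\tfrac{a_n}{\zeta(\tau_n)}\ell(e_{\omega,n}))_{\omega\subseteq\{0,\dots,q\}}$ towards $(\ell(e_\omega))_\omega$; I would invoke Skorokhod's representation theorem to realise these convergences almost surely, simultaneously for all $q$. Fix $q$. On the a.s.\ event that $\cT^{(q)}$ has exactly $q$ leaves (automatic, the $q$ $\mu$-samples being a.s.\ distinct leaves) and that $\tau_n^{(q)}$ has exactly $q$ leaves for all large $n$ (which holds with probability tending to $1$, since ``$x_i$ is a leaf of the reduced tree'' is an open condition on the sampled distance matrix that is satisfied in the limit, so one concludes by Portmanteau), the reduced plane trees $\tau_n^{(q)}$ eventually have the same combinatorial structure as $\cT^{(q)}$, the edge $e_{\omega,n}$ being present for $n$ large exactly when $\ell(e_\omega)>0$. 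I would then couple the auxiliary circle points of the two reduced laminations so that they coincide (both being order statistics of $q$ i.i.d.\ uniform variables, by Definition~\ref{Def4} and Lemma~\ref{lem:measure}), and couple the exponential clocks via $\gamma_{e_{\omega,n}}=E_\omega/\ell(e_{\omega,n})$ and $\gamma_{e_\omega}=E_\omega/\ell(e_\omega)$ for i.i.d.\ $\mathrm{Exp}(1)$ variables $E_\omega$, so that $\tfrac{\zeta(\tau_n)}{a_n}\gamma_{e_{\omega,n}}\to\gamma_{e_\omega}$ a.s.\ when $\ell(e_\omega)>0$ and $\to\infty$ when $\ell(e_\omega)=0$. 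Under this coupling $\bL^{(q)}_{\cdot a_n/\zeta(\tau_n)}(\tau_n)$ and $\bL^{(q)}_\cdot(\cT)$ are step processes adding the same chords at jump times that converge a.s.\ and a.s.\ avoid any prescribed value, so their $J_1$ distance tends to $0$ a.s.; this yields (i), and a fortiori the stated iterated limit.

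\textbf{Part (ii).} By Proposition~\ref{prop:closeprocesses} it suffices to compare $\bL^{(q)}_{\cdot a_n/\zeta(\tau_n)}(\tau_n)$ with the dynamic process $\bL^{\mathrm d}_{\cdot a_n/\zeta(\tau_n)}(\tau_n)$, coupled so that the clock of a reduced edge is the minimum of the clocks of the underlying edges of $\tau_n$. Fix $\eta>0$ and call an edge of $\tau_n$ \emph{big} if the fraction of vertices in the subtree below it exceeds $\eta$. Considering the at most $\eta^{-1}$ maximal big edges, every big edge lies on a root-to-sample geodesic except on an event of probability at most $\eta^{-1}(1-\eta)^q$. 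Off that event every non-spanned edge has a chord subtending an arc of angular width $<2\pi\eta$, hence lying within Hausdorff distance $O(\eta^2)$ of $\bS^1$, so that deleting all non-spanned chords alters the full lamination by $O(\eta)$ uniformly in time; writing $\bL^{\mathrm{span}}_{q,\cdot}(\tau_n)$ for the resulting sub-process, one gets $d_{\mathrm{Sk}}^{\bL}(\bL^{\mathrm d}_{\cdot a_n/\zeta},\bL^{\mathrm{span}}_{q,\cdot a_n/\zeta})=O(\eta)$. The geometric heart of the argument is that, if a reduced edge $e'$ of $\tau_n^{(q)}$ separates $\{u^n_{(i+1)},\dots,u^n_{(j)}\}$ from the remaining marked vertices, then every chord $c_e$ with $e\in e'$ joins the arc delimited by the contour positions of $u^n_{(i)},u^n_{(i+1)}$ to the arc delimited by those of $u^n_{(j)},u^n_{(j+1)}$, because $u^n_{(i)}$ and $u^n_{(j+1)}$ lie outside every subtree $T_{\mathrm{bot}(e)}$, $e\in e'$, while $u^n_{(i+1)}$ and $u^n_{(j)}$ lie inside; thus $\bL^{\mathrm{span}}_{q,\cdot a_n/\zeta}(\tau_n)$ is, for each reduced edge, a \emph{fan} of chords joining one fixed pair of consecutive-sample arcs. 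Since the contour positions of the $q$ i.i.d.\ uniform sampled vertices are, up to $O(\zeta(\tau_n)^{-1})$, order statistics of $q$ i.i.d.\ variables converging (jointly with the reduced plane tree, by \ref{D2}) to the arrangement encoding $\cT^{(q)}$, letting $n\to\infty$ shows that $\bL^{\mathrm{span}}_{q,\cdot}(\tau_n)$ converges in distribution to the restriction of $\bL_\cdot(\cT)$ to Poisson points of $\cT^{(q)}$, whereas $\bL^{(q)}_\cdot(\tau_n)$ converges (by part (i)) to $\bL^{(q)}_\cdot(\cT)$; and these two continuum processes differ, under a common coupling, by at most $2\pi$ times the maximal spacing of $q$ uniform points on the circle in the Hausdorff metric, uniformly in time, which tends to $0$ as $q\to\infty$. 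Combining the three approximations, letting $n\to\infty$ then $\eta\to0$ and $q\to\infty$, and using Strassen's theorem to pass from closeness of the laws to a coupling, yields (ii).

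\textbf{Main obstacle.} The delicate point is the mismatch exploited in the last step: the reduced lamination carries its chords at abstract i.i.d.\ uniform positions independent of the tree, while the genuine lamination carries its chords at the tree-dependent contour positions of the relevant vertices, so the two objects are not pathwise close and the comparison must be made at the level of distributional limits as $n\to\infty$. Making this rigorous requires (a) careful bookkeeping of the nested subtrees hanging below the successive edges of a given reduced edge, in order to see that the whole discrete fan connects a single fixed pair of arcs regardless of how much mass branches off along that edge, and (b) a quantitative bound, uniform in $n$, on the probability that a macroscopic subtree of $\tau_n$ is missed by the $q$ samples, which is what forces the order of limits $n\to\infty$ then $q\to\infty$.
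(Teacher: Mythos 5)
Your part~(i) matches the paper's argument in essence: Skorokhod's representation to make the reduced-edge-length convergence of Lemma~\ref{lem:ghcv} almost sure, coupling the exponential clocks edge by edge, and matching the chord positions. The paper does not bother to couple the circle points $a_{j,q}$ of the two reduced laminations to coincide; instead it invokes Lemma~\ref{lem:repartitionunifcircle} to conclude that both families are uniformly close to the equispaced configuration for large $q$, hence close to each other, but either route works.

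For part~(ii), your overall skeleton is the same as the paper's — reduce via Proposition~\ref{prop:closeprocesses} to comparing $\bL^{(q)}$ with $\bL^{\mathrm d}$, couple the reduced clock of an edge of $\tau_n^{(q)}$ to the minimum of the underlying clocks, split the edges of $\tau_n$ into those on spanned geodesics (a ``fan'' per reduced edge) and those in pendant subtrees, and control the latter by a big-subtree/union bound argument of the type $\eta^{-1}(1-\eta)^q$. The genuine difference is the last step: you pass to the continuum limit $n\to\infty$ for both $\bL^{\mathrm{span}}_{q}$ and $\bL^{(q)}(\tau_n)$, compare the two limiting processes, and invoke a Strassen-type argument to transfer the conclusion back to finite $n$. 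This requires establishing joint convergence of the pair (which you do not spell out) and is more roundabout than necessary. The ``main obstacle'' you flag — that the reduced lamination's chords sit at abstract uniform circle points while the full lamination's chords sit at contour positions — is precisely what Lemma~\ref{lem:repartitionunifcircle} is designed to handle at finite $n$: the contour positions of the $q$ i.i.d.\ uniform marked vertices are themselves i.i.d.\ uniform on the circle, so both families of circle points are, with high probability uniformly over large $q$, $\varepsilon$-close to the equispaced configuration, and hence chords connecting the same pair of consecutive-sample arcs in the two laminations are $O(\varepsilon)$-close in Hausdorff distance \emph{pathwise} at finite $n$. This gives the claimed $\limsup_n$-then-$\lim_q$ bound directly, without passing through the continuum or through Strassen. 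Apart from this detour your argument is sound, and the big-edge bookkeeping (with the caveat that ``big'' should really be read as ``both the subtree and its complement have at least $\eta\zeta$ vertices,'' although your one-sided version also suffices because a subtree of size $\geq(1-\eta)\zeta$ is hit by a sample with overwhelming probability) is an equivalent rendering of the paper's two-kinds-of-edges decomposition.
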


Roughly speaking, (i) states that, as $q \rightarrow \infty$, the time-rescaled discrete reduced lamination-valued process obtained from sampling $q$ i.i.d.\ uniform vertices of $\tau_n$ is asymptotically close to its continuum counterpart $(\bL_{t}^{(q)}(\cT), t \geq 0)$. On the other hand, by (ii), the complete process $(\bL_{t a_n}(\tau_{n}), t \geq 0)$ associated to $\tau_n$ can be approximated by the time-rescaled discrete reduced lamination-valued process, whenever $q$ is large enough.

\begin{proof}[Proof of Theorem \ref{thm:cvlamproc}, \ref{D2} $\Rightarrow$ \ref{D1}]
It follows from Propositions \ref{prop:cvlamredcont} and \ref{prop:resultsofcvg}.
\end{proof}

We now prove Proposition \ref{prop:resultsofcvg}.

\begin{proof}[Proof of Proposition \ref{prop:resultsofcvg} (i)]
We can and will assume, by Skorohod's representation theorem, that \ref{D2} holds almost surely. By \ref{D2}, we have that for all $\varepsilon>0, \mathbb{P}(|u_n|>\varepsilon \zeta(\tau_n)) \rightarrow 0$ as $n \rightarrow \infty$, where $u_n$ denotes a uniform vertex of $\tau_n$. This implies that, for all $q \in \mathbb N$ fixed, $\tau_n^{(q)}$ has $q$ leaves with high probability as $n \rightarrow \infty$. 

In the setting of Lemma \ref{lem:ghcv}, for any $\omega \subseteq \{ 0, \ldots, q \}$, we define $\gamma_{\omega,n}$ an exponential variable of parameter $\ell(e_{\omega, n})$ associated to the edge $e_{\omega, n}$ of $\tau_{n}^{(q)}$ whenever $\Xi_{q,n}(\omega)$ is not empty, otherwise we let $\gamma_{\omega,n} = \infty$ almost surely. Those exponential random variables correspond to the ones used in the definition of $(\bL_{t}^{(q)}(\tau_{n}), t \geq 0)$. Similarly, denote by $\gamma_{\omega}$ the exponential variable of parameter $\ell(e_{\omega})$ associated to the edge $e_{\omega}$ of $\mathcal{T}^{(q)}$ whenever $\Xi_{q}(\omega)$ is not empty, otherwise $\gamma_{\omega} = \infty$ almost surely. The latter exponential random variables correspond to the ones used in the definition of $(\bL_{t}^{(q)}(\mathcal{T}), t \geq 0)$. By Lemma \ref{lem:ghcv}, it follows, jointly with \ref{D2}, that the following convergence holds in $[0,\infty]^{\Omega_q}$, where $\Omega_q$ denotes the set of subsets of $\{ 0, \ldots, q\}$:
\begin{equation} \label{eq:poisson2}
\left( \frac{\zeta(\tau_{n})}{a_{n}} \gamma_{\omega,n}: \omega \subset \{ 0, \ldots, q \} \right) \xrightarrow[ ]{d} \left( \gamma_\omega: \omega \subset \{ 0, \ldots, q \}\right), \hspace*{3mm} \text{as} \hspace*{2mm}  n \rightarrow \infty.
\end{equation}

\noindent This implies that the ``jump times'' of the time-rescaled discrete reduced lamination-valued process converge to those of the continuum reduced lamination-valued process. Here, ``jump times'' refers to the times a new chord is added in the corresponding reduced lamination-valued processes. In particular, if $\gamma_{\omega,n} = \infty$ (resp.\ $\gamma_{\omega} = \infty$), then no chord associated to $\omega$ is added in the reduced lamination-valued processes (no jump), i.e., there is no edge in $\tau_{n}^{(q)}$ (resp.\ $\mathcal{T}^{(q)}$) associated to $\omega$. In fact, the ``actual jump times '' that will count toward the limit are those $\gamma_{\omega,n}$'s and $\gamma_{\omega}$'s for which $\ell(e_{\omega}) >0$. 

We can now assume, by Skorohod's representation theorem, that \ref{D2} and (\ref{eq:poisson2}) hold almost surely. Denote by $\Theta_{\infty}$ the class of strictly increasing, continuous mappings $\theta: \mathbb{R}_{+} \rightarrow \mathbb{R}_{+}$ with $\theta(0) = 0$ and $\theta(t) \uparrow \infty$, as $t \uparrow \infty$. Then to prove our claim, it is enough to show that there exists a sequence of functions $(\theta_{n}, n \geq 1) \in \Theta_{\infty}$ such that, for all $M \geq 0$,
\begin{eqnarray*}
\limsup_{n \rightarrow \infty} \sup_{t \geq 0}| \theta_{n}(t) - t| = 0 \hspace*{3mm} \text{and} \hspace*{3mm} \limsup_{n \rightarrow \infty}  \sup_{t \in [0,M]} d_{\rm H} \left( \bL_{\frac{\theta_{n}(t)a_n}{\zeta(\tau_{n})}}^{(q)}(\tau_{n} ), \bL_{t}^{(q)}(\cT) \right) \overset{\mathbb P}{\underset{q \rightarrow \infty}{\rightarrow}} 0; 
\end{eqnarray*}

\noindent see \cite[Proposition 3.5.3]{Ethier1986} (or \cite[Theorem 1.14 in Chapter VI]{jacod2003}). 

Let $(s_{j}: j =1,\dots,J) := \{ \gamma_\omega \, | \,  \ell(e_\omega) > 0 \}$ be the sequence of ``actual jump times'' of the continuum reduced lamination-valued process arranged in increasing order. Similarly, let $(s_{j}^{n}: j =1, \dots, J) := \{ \gamma_{\omega,n} \, |  \, \ell(e_{\omega,n}) > 0 \}$ be the corresponding sequence of ``actual jump times'' of the time-rescaled reduced lamination-valued process arranged in increasing order. Set $s_{0} = 0$ and define $\theta_{n}$ by letting $\theta_{n}(s_{0})=0$, $\theta_{n}(s_{j}) = s_{j}^{n}$, for $j = 1, \dots, J$, such that $\theta_{n}$ is linear on $[s_{j}, s_{j+1}]$ for $j = 0, 1, \dots, J-1$ and $[s_{J}, \infty)$ (with slope $1$ after $s_J$). Clearly, (\ref{eq:poisson2}) implies that $\theta_{n}$ converges uniformly to the identity mapping on $[0,\infty)$, as $n \rightarrow \infty$. Thus, it only remains to check that the chords that we add at ``actual jump times'' are asymptotically close to each other. Consider $\omega \subset \{ 0, \ldots, q \}$ such that $\ell(e_{\omega}) >0$. Let $e_{\omega, n}$ be the corresponding edge of $\tau_n^{(q)}$ (which exists for $n$ large enough). Denoting by $c_{\omega,n}$ and $c_{\omega}$ the chords coding respectively $e_{\omega,n}$ and $e_{\omega}$, we have that
\begin{align}
\sup_{t \in [0,M]} d_{\rm H} \left( \bL_{\frac{\theta_{n}(t)a_n}{\zeta(\tau_{n})}}^{(q)}(\tau_{n} ), \bL_{t}^{(q)}(\cT) \right) \leq \sup_{\omega \subset \{ 0, \ldots, q\}} d_{\rm H}(c_{\omega,n}, c_{\omega}).
\end{align} 
Furthermore, Lemma \ref{lem:repartitionunifcircle} implies that $\limsup_{n \rightarrow \infty} \sup_{\omega \subset \{ 0, \ldots, q\}} d_{\rm H}(c_{\omega,n}, c_{\omega}) \rightarrow 0$, in probability, as $q \rightarrow \infty$. This concludes our proof. 
\end{proof}

In order to prove Proposition \ref{prop:resultsofcvg} (ii), we make use of the following lemma:
\begin{lemma}
\label{lem:smallsubtrees}
Let $(\tau_n, n \geq 1)$ be a sequence of random rooted plane trees such that $\zeta(\tau_n) \rightarrow \infty$, as $n \rightarrow \infty$. Suppose that, for any $\eta>0$, 
\begin{equation} \label{eq:preuve1}
\lim_{n \rightarrow \infty}\mathbb P(|u_n| > \eta \zeta(\tau_n) ) =0,
\end{equation}
\noindent where $u_n$ is a uniformly distributed vertex in $\tau_n$. For a vertex $u \in \tau_{n}$, let $\tau_{n}(u)$ be the subtree of $\tau_{n}$ rooted at $u$ (i.e., $\tau_{n}(u)$ consists of $u$ and all its descendants) and let $\zeta(\tau_{n}(u))$ be its size (i.e., number of vertices). Then, for all $\varepsilon>0$,
\begin{align*}
\lim_{n \rightarrow \infty}\mathbb P( \zeta(\tau_n(u_n)) > \varepsilon \zeta(\tau_n) ) =0. 
\end{align*}
\end{lemma}

\begin{proof}
Fix $\varepsilon>0$, and set $\Theta_n(\varepsilon) \coloneqq \left\{ v \in \tau_n: \zeta(\tau_n(v))>\varepsilon \zeta(\tau_n) \right\}$. Call $v \in \Theta_n(\varepsilon)$ maximal if no child of $v$ belongs to $\Theta_n(\varepsilon)$. Then, it is clear that all subtrees rooted at a maximal vertex of $\Theta_n(\varepsilon)$ are disjoint. Hence the number of maximal vertices is $\leq \varepsilon^{-1}$. Now observe that, as $n \rightarrow \infty$,
\begin{equation}
\label{eq:preuve2}
\sup_{v \in \Theta_n(\varepsilon)} |v| =o\left( \zeta(\tau_n) \right).
\end{equation}
\noindent Indeed, clearly $\Theta_n(\varepsilon)$ is nonempty and, for any $v \in \Theta_n(\varepsilon)$, $\mathbb P\left( u_n \in \tau_n(v) \right) \geq \varepsilon > 0$. Furthermore, if $u_n \in \tau_n(v)$, then $|u_n| \geq |v|$. Thus, $\mathbb P(|u_n| \geq \sup_{v \in \Theta_n(\varepsilon)} |v|) \geq  \varepsilon$, which implies \eqref{eq:preuve2} by the assumption \eqref{eq:preuve1}. Finally, letting $M_n(\varepsilon)$ be the number of maximal vertices in $\Theta_n(\varepsilon)$, we have
\begin{align*}
\# \Theta_n(\varepsilon) \leq M_n(\varepsilon) \sup_{v \in \Theta_n(\varepsilon)} |v| = o\left( \zeta(\tau_n) \right).
\end{align*}
\noindent In particular,  $\mathbb P( u_n \in \Theta_n(\varepsilon) ) \rightarrow 0$, as $n \rightarrow \infty$, which implies our claim. 
\end{proof}

\begin{proof}[Proof of Proposition \ref{prop:resultsofcvg} (ii)] 
By Proposition \ref{prop:closeprocesses}, it is enough to prove that, for any $\varepsilon >0$:
\begin{align*}
\lim_{q \rightarrow \infty} \liminf_{n \rightarrow \infty} \mathbb{P}\left( d_{\rm Sk}^{\mathbb{L}} \left( \left(\bL_{\frac{ta_n}{\zeta(\tau_{n})}}^{(q)} (\tau_{n}), t \geq 0 \right), \left( \bL^{\rm d}_{\frac{ta_n}{\zeta(\tau_{n})}}(\tau_{n}), t \geq 0 \right)\right)  < \varepsilon \right) =1.
\end{align*}

Fix $q \geq 1$ and recall that, by \ref{D2}, we can assume that $\tau^{(q)}_{n}$ has exactly $q$ leaves. Consider $\omega \subset \{ 0, \ldots, q \}$ such that $\Xi_{q,n}(\omega)$ is not empty. Then, the time at which the chord coding the edge $e_{\omega,n}$ in $\tau^{(q)}_{n}$ appears in the lamination-valued process $(\bL^{(q)}_t(\tau_n), t \geq 0)$ is distributed as an exponential variable of parameter $\ell(e_{\omega,n})$ (which is the minimum of the $\ell(e_{\omega,n})$ exponential random variables of parameter $1$ associated to the set of edges $\Xi_{q,n}(\omega)$; see also Definition \ref{Def5}). Hence, we only need to prove that for all $\varepsilon>0$ there exists $Q$ large enough so that with probability $>1-\varepsilon$, for all $q \geq Q$, all $n$ large enough,
\begin{itemize}
\item[(a)] for any $\omega \subset \{ 0, \ldots, q \}$ such that $\Xi_{q,n}(\omega)$ is not empty, and for any edge $e_{n} \in \Xi_{q,n}(\omega)$, $d_{\rm H}(c_{n}, \tilde{c}_{\omega,n}) < \varepsilon$, where $c_{n}$ is the chord of $\bL(\tau_n)$ coding $e_{n}$ and $\tilde{c}_{\omega,n}$ is the chord of $\bL^{(q)}(\tau_n)$ coding $e_{\omega,n}$; 
\item[(b)] for any edge $e_{n} \in \tau_n$ whose removal does not split the set $E_{q,n}$, $d_{\rm H}(c_{n}, \bS^1) < \varepsilon$;
\end{itemize}

Fix $\varepsilon>0$. Then, by Lemma \ref{lem:repartitionunifcircle}, we can take $Q$ large enough such that, for all $q \geq Q$,
\begin{equation}
\label{eq:lemma4inproof}
\mathbb{P}\left( \sup_{1 \leq j \leq q} d \left(a_{j,q}, e^{-2\pi i j/q}\right) < \varepsilon \right) > 1 -\varepsilon, 
\end{equation} 
\noindent where we recall that $(a_{j,q})_{1 \leq j \leq q}$ are $q$ i.i.d.\ points on $\bS^1$ sorted in clockwise order (starting from $1$). In particular, observe that (a) follows directly from \eqref{eq:lemma4inproof} along with Definition \ref{Def4}. So, it only remains to prove (b). For a vertex $u \in \tau_{n}$, recall that $\tau_{n}(u)$ is the subtree of $\tau_{n}$ rooted at $u$, i.e., $\tau_{n}(u)$ consists of $u$ and all its descendants. Observe that the edges considered in (b) are of two kinds: either they are in a subtree of the form $\tau_{n}(u_{k,q}^n)$ for some $1 \leq k \leq q$, or they are in subtrees branching out of the set of edges that are in the geodesic paths of $\tau_n^{(q)}$.

To deal with the edges of the first kind, observe that with probability $1-o(1)$, as $n \rightarrow \infty$, all subtrees $(\tau_{n}(u_{k,q}^n), 1 \leq k \leq q)$ have size $o(\zeta(\tau_{n}))$. This follows from \ref{D2}, since $|u_n|=o(\zeta(\tau_n))$, in probability, where $u_n$ denotes a uniform vertex of $\tau_n$. Then, by Lemma \ref{lem:smallsubtrees}, with probability $1-o(1)$, all chords corresponding to edges of the first kind have length $o(1)$, as $n \rightarrow \infty$. 

To deal with edges of the second kind, we use the definition of the lamination-valued process from the contour function $C_{\tau_{n}}$ of $\tau_n$; see Definition \ref{Def2} or Definition \ref{Def3}. First, we recall a way of sampling a uniform vertex of $\tau_n$. Consider a uniform random variable $U$ on $[0,1]$. Then, let $e_{U}$ be the edge of $\tau_n$ visited at time $2 \zeta(\tau_{n}) U$ by $C_{\tau_{n}}$, and let $v_{U}$ be the endpoint of $e_{U}$ further from the root (if $2\zeta(\tau_{n}) U \geq 2\zeta(\tau_{n})-2$, set $v_{U}=\emptyset_{n}$). The vertex $v_{U}$ is clearly uniform among the vertices of $\tau_n$. We use this procedure to sample the $q$ uniform vertices $u_{1}^{n}, \dots, u_{q}^{n}$ of $\tau_n$ from $q$ i.i.d.\ uniform random variables $U_{1}, \ldots, U_{q}$. We get that chords that code edges of the second kind necessarily have their two endpoints between two consecutive points on $\bS^1$ (in clockwise order) of the set $\{ e^{-2\pi i U_{k}}: 0 \leq k \leq q \}$ (with the convention that $U_{0}=0$). Finally, we conclude by Lemma \ref{lem:repartitionunifcircle}.
\end{proof}

We can now prove the other implication in Theorem \ref{thm:cvlamproc}, that is, the convergence of the lamination-valued process implies the planar Gromov-weak convergence of the rooted plane trees.

\begin{proof}[Proof of Theorem \ref{thm:cvlamproc}, \ref{D1} $\Rightarrow$ \ref{D2}]
By Skorohod's representation theorem, suppose that \ref{D1} holds almost surely. In particular, \ref{D1} and Proposition \ref{prop:closeprocesses} imply that 
\begin{eqnarray} \label{D12}
\left(\bL_{\frac{t a_{n}}{\zeta(\tau_{n})}}^{\rm d}(\tau_{n}), t \geq 0 \right) \xrightarrow[ ]{a.s.} (\bL_{t}(\mathcal{T}), t \geq 0), \hspace*{3mm} \text{as} \hspace*{2mm}  n \rightarrow \infty, \hspace*{2mm} \text{in} \hspace*{2mm} \mathbf{D}(\mathbb{R}_{+}, \bL(\bar{\mathbb{D}})). 
\end{eqnarray}  
Fix $q \geq 1$ and recall that we set $x_{0,q}=\rho$ and $x_{1,q}, \ldots, x_{q,q}$ the $q$ i.i.d.\ leaves of $\cT$ distributed according $\mu$ and listed in lexicographical order. Recall that for $x \in \mathcal{T}$, we denote by $G(x)$ the $\mu$-mass of the set of leaves of $\mathcal{T}$ that lie on the left of $x$; see \eqref{LeftRight}. For $n \geq 1$, we couple the reduced trees $\tau_n^{(q)}$ as follows. For $1 \leq k \leq q$, denote by $u^n_{k,q}$ the unique vertex of $\tau_n$ such that the edge between $u^{n}_{k,q}$ and its parent is visited at time $2\zeta(\tau_n) \cdot G(x_{k,q})$ by the contour function of $\tau_n$. If $2\zeta(\tau_n) G(x_{k,q}) \geq 2\zeta(\tau_n)-2$, set $u^{n}_{k,q}=\emptyset_{n}$. It follows from Lemma \ref{lem:measure} that the vertices $u^{n}_{1,q}, \dots u^{n}_{q,q}$ are i.i.d.\ uniform vertices of $\tau_n$ in lexicographical order. Recall also that we write $u^{n}_{0,q} = \emptyset_{n}$. 

Let us prove that the sequence of properly rescaled rooted plane trees converges in the planar Gromov-weak sense toward $\cT$. To be precise, we check that (\ref{GromovWC}) is satisfied in this setting - or equivalently Lemma \ref{lem:ghcv} (ii).

By Lemma \ref{lem:measure}, sampling $x_{1,q}, \dots, x_{q,q}$ is equivalent to sample the order statistics of $q$ i.i.d.\ uniform points on $\bS^1$, say $a_{1,q}, \ldots, a_{q,q}$, by letting $a_{k,q} = e^{-2\pi i G(x_{k,q})}$, for $1 \leq k \leq q$. We also set $a_{0,q} = e^{-2\pi i G(x_{0,q})} =1$. Recall from Lemma \ref{lem:distances}, that for any $\omega \subset \{0,\ldots, q \}$ such that $\Xi_{q}(\omega)$ is not empty, the set $\Xi_{q}(\omega)$ is a geodesic of $\mathcal{T}$ that corresponds to an edge $e_\omega$ of $\cT^{(q)}$ that splits $E_q$ into $E_{q}^{\omega, 1}$ and $E_{q}^{\omega, 2}$. By the definition of the continuous lamination-valued process in \eqref{ContinuumLamination}, any point of the Poisson point process $\Pi$ on $\cT$ falling on $\Xi_{q}(\omega)$ is coded by a chord splitting $A_{q} \coloneqq \{a_{0,q}, \ldots, a_{q,q} \}$ into $\{ a_{k,q}: k \in \omega \}$ and $\{ a_{k,q}: k \notin \omega \}$. Denote by $c_{\omega}$ the first such chord in the lamination-valued process $(\bL_t(\cT), t \geq 0)$ and by $T_{\omega}$ the time at which it appears. For all $n \geq 1$, let also $T_{\omega,n}$ be the time at which the first such chord, say $c_{\omega,n}$, appears in the lamination-valued process $( \bL_{t}^{\rm d}(\tau_n), t \geq 0)$. If there is no such edge $e_\omega$ (i.e., $\Xi_{q}(\omega)$ is empty) or no chord $c_{n,\omega}$, set $T_{\omega}=\infty$ and $T_{\omega,n}=\infty$, respectively. Therefore, Theorem \ref{thm:cvlamproc}, \ref{D1} $\Rightarrow$ \ref{D2} follows by showing that the following convergence holds in $[0,\infty]^{\Omega_q}$:
\begin{equation} \label{Eqtimes}
\left( \frac{\zeta(\tau_{n})}{a_{n}} T_{\omega,n}: \omega \subset \{ 0, \ldots, q \} \right) \xrightarrow[ ]{d} \left( T_\omega: \omega \subset \{ 0, \ldots, q \}\right), \hspace*{3mm} \text{as} \hspace*{2mm}  n \rightarrow \infty.
\end{equation}

\noindent Indeed, in the setting of Lemma \ref{lem:ghcv}, for $\omega \subset \{0,\ldots, q \}$ such that $\Xi_{q}(\omega)$ and $\Xi_{q,n}(\omega)$ are not empty, observe that $T_{\omega,n}$ and $T_{\omega}$ are distributed as exponential random variables of respective parameters $\ell(e_{\omega, n})$ and $\ell(e_{\omega})$. Then, it is a simple exercise to check that (\ref{Eqtimes}) implies the statement of Lemma \ref{lem:ghcv} (ii) and therefore our result.

Let us then prove (\ref{Eqtimes}). Observe that (\ref{Eqtimes}) is clear for the $\omega$'s such that $T_{\omega}=\infty$. Indeed, if $T_{\omega, n}$ was bounded by some $K > 0$ along a subsequence then by (\ref{D12}) the sequence of associated chords would converge (up to taking again a subsequence) toward a chord which would appear in the continuous lamination-valued process before time $K$. Furthermore, this sequence of chords cannot degenerate into a point, since this point would be a leaf (or the root) of $\cT^{(q)}$ and $T_{\omega}<\infty$ for any singleton $\omega$. Therefore, we only have to focus on the case $T_{\omega}<\infty$.

By (\ref{D12}), necessarily $\liminf_{n \rightarrow \infty} (\zeta(\tau_{n})/a_{n}) T_{\omega,n} \geq T_{\omega}$ almost surely. On the other hand, let us prove that for every $\omega \subset \{0,\ldots, q \}$ such that $\Xi_{q}(\omega)$ is not empty, the chord $c_\omega$ is necessarily well approximated by a sequence of chords in the discrete lamination-valued processes. To this end, let $(a_1,a_2)$ and $(b_1,b_2)$ be the two arcs between consecutive points of $A_q$ connected by $c_\omega$ (with $a_1, a_2, b_1, b_2 \in A_{q}$ in this clockwise order), and denote by $p_{\omega}$ the middle of the chord $c_\omega$. Suppose that there exists a subsequence $(n_{m}, m \geq 1)$ of non-negative integers such that, for all $m \geq 1$, there exists a chord $c_{m}$ in $\bL_{a_{n_{m}} (T_{\omega}+1/m)/\zeta(\tau_{n_{m}})}^{\rm d}(\tau_{n_{m}})$ satisfying $d(p_{\omega},c_{m})\leq m^{-1}$ and that does not connect the arcs $(a_1,a_2)$ and $(b_1,b_2)$; here $d(p_{\omega},c_{m})$ denotes the distance from the point $p_{\omega}$ to the set $c_{m}$. Up to taking a subsequence, we can assume that $c_m$ has an endpoint in the arc $(a_2,b_1)$. Hence, since by (\ref{D12}), $\bL_{a_{n_m} (T_{\omega}+1/m)/\zeta(\tau_{n_m})}^{\rm d}(\tau_{n_m}) \rightarrow \bL_{T_{\omega}}(\cT)$, as $m \rightarrow \infty$, almost surely, there would exist a chord in $\bL_{T_{\omega}}(\cT)$ containing $p_{\omega}$ and with an endpoint in $(a_2,b_1)$, and thus crossing $c_\omega$. However, the above necessarily does not happen and, along all sub-sequences $(n_{m}, m \geq 1)$, for $m$ large enough, a chord $c_{m}$ of $\bL_{a_{n_{m}} (T_{\omega}+1/m)/\zeta(\tau_{n_{m}})}^{\rm d}(\tau_{n_{m}})$ such that $d(p_{\omega},c_{m})<m^{-1}$ connects the arcs $(a_1,a_2)$ and $(b_1,b_2)$. Therefore, we get that $\limsup_{n \rightarrow \infty} (\zeta(\tau_{n})/a_{n}) T_{\omega,n} \leq T_{\omega}$ almost surely. This concludes the proof of (\ref{Eqtimes}). 
\end{proof}

\section{Fragmentation and process of masses}
\label{sec:fragmentation}

The aim of this final section is to provide sufficient conditions on a sequence of rooted plane trees $(\tau_{n}, n \geq 1)$ to ensure that their appropriately rescaled fragmentation processes converge to a fragmentation process constructed from an excursion-type function, and consequently, to prove Corollary \ref{cor:masses}. 
\subsection{Convergence of the fragmentation process of trees}

For convenience, we consider a slightly different version of the fragmentation process described at the beginning of Section \ref{SecFragLaMain}, where edges are removed at i.i.d.\ uniform random times, rather than at integer times.

Let $\tau$ be a rooted plane tree and equip the edges $\textbf{edge}(\tau)$ of $\tau$ with i.i.d.\ uniform random variables (or weights) $\mathbf{w} = (w_{e}: e \in \textbf{edge}(\tau))$ on $[0,1]$ independent of $\tau$. In particular, for a vertex $v \in \tau$ with $k_{v} \geq 1$ children, we write $(w_{v,i}, 1 \leq i \leq k_{v})$ for the weights of the edges connecting $v$ with its children. For $s \in [0,1]$, we then keep the edges of $\tau$ with weight smaller than $s$ and discard the others. This gives rise to a forest $\mathbf{f}_{\tau}(s)$ with set of edges given by $\textbf{edge}(\mathbf{f}_{\tau}(s)) = \{ e \in \textbf{edge}(\tau): w_{e} \leq s\}$. Furthermore, each vertex $v \in \mathbf{f}_{\tau}(s)$ has $k_{s}(v) = \sum_{i=1}^{k_{v}} \mathbf{1}_{\{ w_{v,i} \leq s \}}$ children if $k_{v} \geq 1$; otherwise, $k_{s}(v) = 0$ whenever $k_{v} =0$. The forest $\mathbf{f}_{\tau}(s)$ associated to $\tau $ and $\mathbf{w}$ is called {\sl the fragmentation forest} at time $s$. Let $\mathbf{F}_{\tau}(u)$ be the sequence of sizes (number of vertices) of the connected components of the forest $\mathbf{f}_{\tau}(1-u)$, ranked in decreasing order. We view the sequence of sizes of the components of $\mathbf{f}_{\tau}(1-u)$ as an infinite sequence, by completing it with an infinite number of zero terms. In particular, $\mathbf{F}_{\tau}(0) = (\zeta(\tau), 0, 0, \dots)$ and $\mathbf{F}_{\tau}(1) = (1, 1, \dots, 1, 0, 0, \dots)$ (with $\zeta(\tau)$ $1$'s). We refer to $\mathbf{F}_{\tau} = (\mathbf{F}_{\tau}(u), u \in [0,1])$ as the fragmentation process associated with $\tau$. 

Following \cite[Section 3]{Bertoin2001}, we next explain how to construct fragmentation processes from excursion-type functions. A function $g = (g(s), s \in [0,1]) \in \mathbf{D}([0,1], \mathbb{R})$ is an excursion-type function if $g(0) = g(1)=g(1-) =0$, it is non-negative and it makes only positive jumps (i.e.\ $g(s-) \leq g(s)$ for all $s \in (0, 1]$). For such a function $g$ and every $t \geq 0$, define $g^{(t)} = (g^{(t)}(s), s \in [0,1])$ and $I^{(t)}_{g} = (I^{(t)}_{g}(s), s \in [0,1])$ as
\begin{eqnarray*} 
g^{(t)}(s) = g(s) - ts \hspace*{2mm} \text{and} \hspace*{2mm} I^{(t)}_{g}(s) = \inf_{u \in [0,s]} g^{(t)}(u), \hspace*{4mm} \text{for} \hspace*{2mm} s \in [0,1].
\end{eqnarray*} 

\noindent For $t \geq 0$, we write $\mathbf{F}_{g}(t) = (F_{g, 1}(t), F_{g, 2}(t), \dots)$ for the ranked sequence (in decreasing order) of the lengths of the interval components of the complement of the support of the Stieltjes measure ${\rm d} (- I^{(t)}_{g})$; note that $s \mapsto -I^{(t)}_{g}(s) = \sup_{u \in [0,s]} - g^{(t)}(u)$ is an increasing process. The process $\mathbf{F}_{g} = (\mathbf{F}_{g}(t), t \geq 0)$ is the fragmentation process associated to the excursion-type function $g$. Let $\text{Supp}({\rm d} (- I^{(t)}_{g}))$ denote the support of ${\rm d} (- I^{(t)}_{g})$ and note that $(0,1) \setminus \text{Supp}({\rm d} (- I^{(t)}_{g}))$ is the union of all open intervals on which the function $-I^{(t)}_{g}$ is constant. We call constancy interval of $-I^{(t)}_{g}$ any interval component of $(0,1) \setminus \text{Supp}({\rm d} (- I^{(t)}_{g}))$.

\begin{theorem} \label{Theo3}
Let $(\tau_{n}, n \geq 1)$ be a sequence of random rooted plane trees. Suppose that there are a sequence $(a_{n}, n \geq 1)$ of positive real numbers and a random excursion-type function $X = (X(u), u \in [0,1])$ satisfying
\begin{enumerate}[label=(\textbf{D.\arabic*})]
\item $a_{n} \rightarrow \infty$ and $\frac{\zeta(\tau_{n})}{a_{n}} \rightarrow \infty$, as $n \rightarrow \infty$; \label{C1}

\item for $n \geq 1$, let $(W_{\tau_{n}}^{\rm prim}(\zeta(\tau_{n}) u), u \in [0,1])$ be the (time-scaled) Prim path of $\tau_{n}$ with respect to $\mathbf{w}$. Then, $(a_{n}^{-1}W_{\tau_{n}}^{\rm prim}(\zeta(\tau_{n}) u), u \in [0,1]) \xrightarrow[ ]{d} (X(u), u \in [0,1])$, as $n \rightarrow \infty$, in $\mathbf{D}([0,1], \mathbb{R})$; \label{C2}

\item for every fixed $t \geq 0$, $X^{(t)}(s) \wedge X^{(t)}(s-) > I_{X}^{(t)}(s)$, for $s \in (s^{\prime},s^{\prime \prime})$, whenever $(s^{\prime},s^{\prime \prime}) \in [0,1]$ is an interval of constancy of $- I_{X}^{(t)}$, almost surely.
\label{C3}
\setcounter{Cond}{\value{enumi}}
\end{enumerate}

\noindent Then, for every fixed $t >0$,
\begin{eqnarray*}
\frac{1}{\zeta(\tau_{n})}\mathbf{F}_{\tau_{n}}\left(t\frac{a_{n}}{\zeta(\tau_{n})} \right) \xrightarrow[ ]{d} \mathbf{F}_{X}(t), \hspace*{3mm} \text{as} \hspace*{2mm}  n \rightarrow \infty, \hspace*{2mm} \text{in} \hspace*{2mm}  \boldsymbol{\Delta},
\end{eqnarray*}
\noindent equipped with the topology of pointwise convergence. If moreover, 
\begin{enumerate}[label=(\textbf{D.\arabic*})]
\setcounter{enumi}{\value{Cond}}
\item for every fixed $t \geq 0$, $\mathbf{F}_{X}(t) \in \boldsymbol{\Delta}_{1}$ almost surely, where $\boldsymbol{\Delta}_{1} \subset \boldsymbol{\Delta}$ is the space of the elements of $\boldsymbol{\Delta}$ with sum $1$, \label{C4}

\item $\mathbb{E}[ r_{\tau_{n}}^{\rm gr}(u_{n}, u_{n}^{\prime})] = O(\zeta(\tau_{n})/a_{n})$, where $r_{\tau_{n}}^{\rm gr}(u_{n}, u_{n}^{\prime})$ denotes the graph distance between two independent uniformly random vertices $u_{n}$ and $u_{n}^{\prime}$ of $\tau_{n}$.  \label{C5}
\end{enumerate}

\noindent then
\begin{eqnarray*}
\left(\frac{1}{\zeta(\tau_{n})}\mathbf{F}_{\tau_{n}}\left(t\frac{a_{n}}{\zeta(\tau_{n})} \right), t \geq 0 \right) \xrightarrow[ ]{d} (\mathbf{F}_{X}(t), t \geq 0), \hspace*{3mm} \text{as} \hspace*{2mm}  n \rightarrow \infty, \hspace*{2mm} \text{in} \hspace*{2mm} \mathbf{D}(\mathbb{R}_{+}, \boldsymbol{\Delta}).
\end{eqnarray*}
\end{theorem}

 To prove Theorem \ref{Theo3}, we follow the approach developed in \cite{Brou2016}, which was also recently used in \cite{Berzunza2020}. Therefore, to avoid unnecessary repetitions, we only provide enough details to convince the reader that the arguments can be adapted from these works.

Let $\emptyset = u(0) \prec_{\text{prim}}  u(1) \prec_{\text{prim}} \dots  \prec_{\text{prim}} u(\zeta(\tau_{n})-1)$ be the Prim order of the vertices of $\tau_{n}$ with respect to $\mathbf{w}$. Since $\mathbf{f}_{\tau_{n}}(s)$ and $\tau_{n}$ possess the same set of vertices, we can and will consider that the vertices of $\mathbf{f}_{\tau_{n}}(s)$ are ordered according to the Prim order of the vertices in $\tau_{n}$. For $n \geq 1$ and $s \in [0,1]$, we associate to the Prim order of the vertices of $\mathbf{f}_{\tau_{n}}(s)$ an exploration path $W_{\tau_{n}}^{(s)} = (W_{\tau_{n}}^{(s)}(i), 0 \leq i \leq \zeta(\tau_{n}))$ by letting $W_{\tau_{n}}^{(s)}(0) = 0$, and for $0 \leq i \leq \zeta(\tau_{n})-1$, $W_{\tau_{n}}^{(s)}(i+1) = W_{\tau_{n}}^{(s)}(i) + k_{s}(u(i))-1$, where $k_{s}(u(i))$ denotes the number of children of $u(i)$ in $\mathbf{f}_{\tau_{n}}(s)$. We shall think of such a path as the step function on $[0,\zeta(\tau_{n})]$ given by $u \mapsto W_{\tau_{n}}^{(s)}(\lfloor u \rfloor)$. For fixed $t \geq 0$, consider the sequence $(s_{n}(t), n \geq 1)$ of positive times given by
\begin{eqnarray*}
s_{n}(t) = \max\left( 1-\frac{a_{n}}{ \zeta(\tau_{n})}t, 0 \right),
\end{eqnarray*} 

\noindent and define the process $X_{n}^{(t)} = (X_{n}^{(t)}(u),u \in [0,1])$ by letting
\begin{eqnarray*}
X_{n}^{(t)}(u) =  \frac{1}{a_{n}} W_{\tau_{n}}^{(s_{n}(t))}( \zeta(\tau_{n}) u), \hspace*{4mm} \text{for} \hspace*{2mm} u \in [0,1]. 
\end{eqnarray*}

\noindent For simplicity, we use the notation $X_{n} = (X_{n}^{(t)}, t \geq 0)$. For any $u \in [0,1]$, the mapping $t \mapsto X_{n}^{(t)}(u)$ is non-increasing in $t$, which implies that $X_{n}$ has c\`adl\`ag paths. In particular, we can view the process $t \mapsto X_{n}^{(t)}$ as a random variable taking values in the space $\mathbf{D}(\mathbb{R}_{+}, \mathbf{D}([0,1], \mathbb{R}))$. In other words, for fixed $t \geq 0$, $X_{n}^{(t)}$ is a random variable in $\mathbf{D}([0,1], \mathbb{R})$.

\begin{theorem}  \label{Theo4}
In the setting of Theorem \ref{Theo3}, we have that
\begin{eqnarray*} 
(X_{n}^{(t)}, t \geq 0) \xrightarrow[ ]{d} (X^{(t)}, t \geq 0), \hspace*{3mm} \text{as} \hspace*{2mm} n \rightarrow \infty, \hspace*{2mm} \text{in} \hspace*{2mm} \mathbf{D}(\mathbb{R}_{+}, \mathbf{D}([0,1], \mathbb{R})).
\end{eqnarray*}
\end{theorem}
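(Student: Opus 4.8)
The plan is to mimic, step by step, the proof of the analogous statement in \cite{Berzunza2020}, checking that each ingredient used there is available under the hypotheses \ref{C1}--\ref{C3} of Theorem \ref{Theo3}. The starting point is to relate the exploration path $W_{\tau_{n}}^{(s)}$ of the fragmentation forest $\mathbf{f}_{\tau_{n}}(s)$ at time $s$ to the Prim path $W_{\tau_{n}}^{\mathrm{prim}}$ of the whole tree. The key combinatorial observation, already present in \cite{Berzunza2020}, is that the Prim ordering is consistent with the removal of edges: when an edge of weight $w_{e}$ is deleted, the Prim order of the remaining vertices of $\mathbf{f}_{\tau_{n}}(s)$ (for $s<w_{e}$) is exactly the restriction of the Prim order of $\tau_{n}$, and the decrements of $W_{\tau_{n}}^{(s)}$ are obtained from those of $W_{\tau_{n}}^{\mathrm{prim}}$ by turning the steps corresponding to the removed edges into $-1$ steps. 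Concretely, for $s = s_{n}(t)$ one gets that $X_{n}^{(t)}(u)$ is, up to a negligible boundary term, a functional of $W_{\tau_{n}}^{\mathrm{prim}}$ of the form $a_{n}^{-1}\bigl(W_{\tau_{n}}^{\mathrm{prim}}(\zeta(\tau_{n})u) - \#\{\text{edges removed among the first }\zeta(\tau_{n})u\text{ vertices}\}\bigr)$, and the number of removed edges, being a sum of i.i.d.\ Bernoulli$(1-s_{n}(t))$ variables indexed by the edges, concentrates around $(1-s_{n}(t))\,\zeta(\tau_{n})u = t\,a_{n}\,u$. This is where \ref{C1} enters: since $\zeta(\tau_{n})/a_{n}\to\infty$, the parameter $1-s_{n}(t)\to 0$, so a law of large numbers / Chebyshev estimate (exactly as in Proposition~\ref{prop:closeprocesses}) gives, after division by $a_{n}$, a deterministic linear drift $-tu$.

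The second step is therefore to make this heuristic precise: show that
\[
\sup_{u\in[0,1]}\left| X_{n}^{(t)}(u) - \left(a_{n}^{-1}W_{\tau_{n}}^{\mathrm{prim}}(\zeta(\tau_{n})u) - t u\right)\right| \xrightarrow[ ]{\mathbb{P}} 0, \qquad n\to\infty,
\]
for each fixed $t\ge 0$, and more generally jointly over finitely many values of $t$, with the error controlled uniformly on compact $t$-intervals. Granting this, \ref{C2} gives $a_{n}^{-1}W_{\tau_{n}}^{\mathrm{prim}}(\zeta(\tau_{n})\cdot)\xrightarrow{d} X$ in $\mathbf{D}([0,1],\mathbb{R})$, and since subtracting the continuous deterministic function $u\mapsto tu$ is a continuous operation on $\mathbf{D}([0,1],\mathbb{R})$ (with the Skorokhod $J_{1}$ topology), one obtains $X_{n}^{(t)}\xrightarrow{d} X^{(t)}$ for each fixed $t$, and in fact joint convergence of $(X_{n}^{(t_{1})},\dots,X_{n}^{(t_{k})})$ to $(X^{(t_{1})},\dots,X^{(t_{k})})$ for any finite collection $t_{1},\dots,t_{k}$, because all these are continuous functionals of the single path $a_{n}^{-1}W_{\tau_{n}}^{\mathrm{prim}}$. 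This yields convergence of the finite-dimensional distributions of $(X_{n}^{(t)},t\ge 0)$ toward those of $(X^{(t)},t\ge 0)$ in $\mathbf{D}(\mathbb{R}_{+},\mathbf{D}([0,1],\mathbb{R}))$.

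The third and most delicate step is tightness of $(X_{n}^{(t)},t\ge 0)$ in $\mathbf{D}(\mathbb{R}_{+},\mathbf{D}([0,1],\mathbb{R}))$, i.e.\ control of the oscillation of the map $t\mapsto X_{n}^{(t)}$. Here one exploits monotonicity: $t\mapsto X_{n}^{(t)}$ is non-increasing (removing more edges only lowers the exploration path), so its jumps are controlled, and in the limit $t\mapsto X^{(t)}$ is also monotone (non-increasing) with paths in $\mathbf{D}(\mathbb{R}_{+},\mathbf{D}([0,1],\mathbb{R}))$; monotone processes are comparatively easy to handle, as in \cite{Berzunza2020}. The standard route is to check an Aldous-type tightness criterion: for any stopping times $\tau_{n}$ (w.r.t.\ the filtration generated by $t\mapsto X_{n}^{(t)}$) and any $\delta_{n}\downarrow 0$, $d(X_{n}^{(\tau_{n})},X_{n}^{(\tau_{n}+\delta_{n})})\to 0$ in probability, where $d$ is a metric on $\mathbf{D}([0,1],\mathbb{R})$; this reduces, via the representation above, to showing that the extra number of edges removed between times $\tau_{n}$ and $\tau_{n}+\delta_{n}$ is $o(a_{n})$, which again follows from the same Bernoulli concentration estimate together with \ref{C1}. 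The hypothesis \ref{C3} is not needed for the convergence of $X_{n}$ itself; it is used afterwards (in the proof of Theorem \ref{Theo3}) to ensure that the passage from the exploration paths to the sequence of masses $\mathbf{F}_{X}(t)$ is a continuous operation at the limit, i.e.\ that no mass escapes into the ``constancy intervals''. The main obstacle I expect is precisely the tightness/joint-continuity argument for the $t$-dependence: one must be careful that the Skorokhod topology in the inner space $\mathbf{D}([0,1],\mathbb{R})$ interacts well with the outer Skorokhod topology in $t$, and that the boundary corrections (the vertex at the very end of the Prim order, the discretisation $\lfloor\cdot\rfloor$, and the fact that $s_{n}(t)$ may exceed $1$ for $t$ large, at which point the forest is totally fragmented and $X_{n}^{(t)}\equiv$ the path of the fully disconnected forest) are genuinely negligible in the relevant topology. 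All of these are handled in \cite{Berzunza2020} in the $\alpha$-stable case and the arguments transfer essentially verbatim, the only change being that the role of the rescaled Łukasiewicz/Prim path convergence input is now played by \ref{C2} instead of a stable limit theorem.
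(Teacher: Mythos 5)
Your proposal follows essentially the same route as the paper: the paper's proof of Theorem~\ref{Theo4} is precisely a deferral to the adaptation of \cite[Theorem~3]{Berzunza2020}, using \ref{C1}--\ref{C2} and the observation that $k_{s_n(t)}(u(i))$ is Binomial with parameters $(k_{u(i)}, s_n(t))$, and your outline (exploration path of the forest as the Prim path minus the count of removed edges, concentration giving the linear drift $-tu$, continuity of the fdd's, Aldous-type tightness using monotonicity in $t$, and the correct observation that \ref{C3} is not used here) matches that blueprint. The only place to be slightly careful in carrying it out is the phrase ``i.i.d.\ Bernoulli indexed by the edges'': the retention indicators are not independent of the Prim order, since both are functionals of the same weights; the correct statement, as \cite{Berzunza2020} makes explicit and as the paper's hint signals, is the Binomial distribution of the number of retained children given the Prim order, and the concentration estimate should be run at that conditional level.
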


\begin{proof}[Proof of Theorem \ref{Theo4}]
The proof of Theorem \ref{Theo4} follows from \ref{C1}-\ref{C2} by adapting the argument used in the proof of \cite[Theorem 3]{Berzunza2020} (see also \cite[Section 5]{Brou2016}). It involves two steps: convergence of the finite-dimensional distributions and tightness of the sequence of processes $(X_{n}, n \geq 1)$. Indeed, one only needs to be aware that, for fixed $t \geq 0$ and for $0 \leq i \leq \zeta(\tau_{n})-1$, the number of children $k_{s_{n}(t)}(u(i))$ of the vertex $u(i) \in \mathbf{f}_{\tau_{n}}(s_{n}(t))$ is distributed as a binomial random variable with parameters $(k_{u(i)}, s_{n}(t))$. Details are left to the interested reader.
\end{proof}

Next we prove Theorem \ref{Theo3}.

\begin{proof}[Proof of Theorem \ref{Theo3}]
For $t \geq 0$, define the process $I_{n}^{(t)} = (I_{n}^{(t)}(u), u \in [0,1])$ by letting
\begin{eqnarray*}
I_{n}^{(t)}(u) = \inf_{s \in [0,u]} X_{n}^{(t)}(s),  \hspace*{4mm} \text{for} \hspace*{2mm} u \in [0,1].
\end{eqnarray*}
\noindent Similarly, we define the process $I^{(t)} = (I^{(t)}(u), u \in [0,1])$ by letting $I^{(t)}(u) = \inf_{s \in [0,u]} X^{(t)}(s)$, for  $u \in [0,1]$. 

\noindent For $t \geq 0$, we write $\mathbf{F}(-I_{n}^{(t)}) = (F_{1}(-I_{n}^{(t)}), F_{2}(-I_{n}^{(t)}), \dots)$ for the ranked sequence (in decreasing order) of the lengths of the intervals components of the complement of the support of the Stieltjes measure ${\rm d} (-I_{n}^{(t)})$. By \cite[Lemma 1]{Berzunza2020}, we know that 
\begin{eqnarray} \label{eq1NewRevII}
\frac{1}{\zeta(\tau_{n})}\mathbf{F}_{\tau_{n}}\left(t\frac{a_{n}}{\zeta(\tau_{n})} \right) = \mathbf{F}( - I_{n}^{(t)}), \hspace*{4mm} \text{for} \hspace*{2mm} t \geq 0. 
\end{eqnarray}

\noindent Observe that  $X_{n}^{(t)}(0) = X^{(t)}(0) = 0$, for all $t \geq 0$. Our first claim follows from Theorem \ref{Theo4}, \ref{C3} and  \cite[Lemma 4]{Bertoin2001}. 

Next, we prove the second claim. By the Skorokhod representation theorem, we can and we will work in a probability space where the convergence in Theorem \ref{Theo4}  together with \ref{C3} and \ref{C4}  holds almost surely. By Theorem \ref{Theo4}, there exists a dense subset $D$ of $\mathbb{R}_{+}$ such that for any fixed $k \in \mathbb{N}$ and collection $0 \leq t_{1} < t_{2} < \cdots < t_{k} < \infty$ with $t_{1}, \dots, t_{k} \in D$, we have that a.s.,
\begin{eqnarray*}
(I_{n}^{(t_{1})}, \dots, I_{n}^{(t_{k})}) \rightarrow (I^{(t_{1})}, \dots, I^{(t_{k})}), \hspace*{3mm} \text{as} \hspace*{3mm} n \rightarrow \infty,
\end{eqnarray*}

\noindent in $\mathbf{D}([0,1], \mathbb{R})^{k}$ (i.e., the $k$-fold space of $\mathbf{D}([0,1], \mathbb{R})$). Then \cite[Lemma 4]{Bertoin2001} implies that a.s.,
\begin{eqnarray*}
(\mathbf{F}(- I_{n}^{(t_{1})}), \dots, \mathbf{F}(- I_{n}^{(t_{k})})) \rightarrow (\mathbf{F}(-I^{(t_{1})}), \dots, \mathbf{F}(-I^{(t_{k})})), \hspace*{3mm} \text{as} \hspace*{3mm} n \rightarrow \infty,
\end{eqnarray*}
\noindent in $\boldsymbol{\Delta}^{k}$ (i.e., the $k$-fold space of $\boldsymbol{\Delta}$ equipped with the $\ell^{1}$-norm). Note that the conditions in \cite[Lemma 4]{Bertoin2001} are satisfied by our assumptions (in fact, one has to apply \cite[Lemma 4]{Bertoin2001} to $-I_{n}$ and $-I$).  This shows the convergence of the finite-dimensional distributions of the sequence of processes $((\mathbf{F}(- I_{n}^{(t)}), t \geq 0))_{n \geq 1}$ to those of the process $(\mathbf{F}(- I^{(t)}), t \geq 0)$. On the other hand, \ref{C5} and \cite[Corollary 5.5]{GbCeSv2025} imply that $((\mathbf{F}(- I_{n}^{(t)}), t \geq 0))_{n \geq 1}$ is tight in $\mathbf{D}(\mathbb{R}_{+}, \boldsymbol{\Delta})$, which concludes our proof (recall \eqref{eq1NewRevII}).
\end{proof}

The following result establishes the convergence, after proper rescaling, of the fragmentation process associated with a tree $\mathbf{t}_{n}$ having a given degree sequence.  

\begin{theorem} \label{Theo1}
Suppose that $\mathbf{s}_{n}$ satisfies \ref{B1}-\ref{B3} and \ref{A4}. Then, 
\begin{eqnarray*}
\left(\frac{1}{V_{n}}\mathbf{F}_{\mathbf{t}_{n}}\left(t\frac{b_{n}}{V_{n}} \right), t \geq 0 \right) \xrightarrow[ ]{d} (\mathbf{F}_{X^{\rm exc}}(t), t \geq 0), \hspace*{3mm} \text{as} \hspace*{2mm}  n \rightarrow \infty, \hspace*{2mm} \text{in} \hspace*{2mm} \mathbf{D}(\mathbb{R}_{+}, \boldsymbol{\Delta}).
\end{eqnarray*}
\noindent where $X^{\rm exc}$ is the Vervaat transform of an EI bridge with parameters $(\theta_{i}, i \geq 0)$.
\end{theorem}

Note that Theorem \ref{Theo1} holds under more general assumptions than Theorem \ref{Theo2}.  In essence, it requires the convergence of the $\L$ukasiewicz path (recall Theorem \ref{Theo3}), which we can prove under the weaker assumptions (see Theorem \ref{Theo5}). The limiting process $(\mathbf{F}_{X^{\rm exc}}(t), t \geq 0)$ corresponds precisely to the fragmentation process  $(\mathbf{F}(t), t \geq 0)$ in the statement of Corollary \ref{cor:masses}, as we will see in its proof in the next section. We recall that $(\mathbf{F}_{X^{\rm exc}}(t), t \geq 0)$ is the fragmentation process as constructed by Bertoin \cite{Bertoin2000, Bertoin2001}, and that it is also equivalent to the fragmentation process $(\mathbf{F}_{\cT_\theta}(t), t \geq 0)$ of the Inhomogeneous CRT $\cT_\theta$, with parameters $\theta = (\theta_{0}, \theta_{1}, \dots)$, see \cite{AldousPitmanI2000}. 

\begin{proof}[Proof of Theorem \ref{Theo1}]
The assumptions \ref{B1}-\ref{B3} and \ref{A4} in Theorem \ref{Theo1} imply \ref{C1} and \ref{C2} in Theorem \ref{Theo3} with $X$ given by the Vervaat transform of an EI bridge with parameters $(\theta_{i}, i \geq 0)$. Indeed, Lemma \ref{lemma1} and Theorem \ref{Theo5} imply \ref{C1} and \ref{C2}, respectively. 
\cite[Lemma 7]{Bertoin2001} shows \ref{C3} and \ref{C4}. On the other hand, it follows from \cite[Proposition 4.5]{Cyril2019} that there exists two universal constants $c_{1}, c_{2}>0$ such that 
\begin{align}
\mathbb{P}\left( |u_{n}| \geq x V_{n}/b_{n}  \right) \leq c_{1}e^{-c_{2}x}
\end{align}
\noindent uniformly for $x >0$ and all $n$ large enough, where $u_{n}$ is a uniformly random vertex of $\mathbf{t}_{n}$. This implies \ref{C5}. Therefore, Theorem \ref{Theo1} follows from Theorem \ref{Theo3}.
\end{proof}

\subsection{Convergence of the process of masses} \label{sec:proofcor} 

We conclude this section with the proof of Corollary \ref{cor:masses}. The proof makes use of Theorem \ref{Theo1}.

\begin{proof}[Proof of Corollary \ref{cor:masses}]
For all $n \geq 1$ and $1 \leq k \leq E_{n}$, let $\kappa_{n,k}$ be the time at which the $k$-th edge of $\mathbf{t}_{n}$ is removed in the fragmentation process $(V_{n}^{-1}\mathbf{F}_{\mathbf{t}_{n}}(tb_{n}/V_{n}), t \geq 0)$. Then, 
\begin{align*}
\mathbb{M}{\rm ass} [ \bL_{t b_n}(\mathbf{t}_n)] =  \frac{1}{V_n} \mathbf{F}_{\mathbf{t}_{n}}\left( \frac{b_{n}}{V_n} \kappa_{n,\lfloor t b_n \rfloor} \right), \hspace*{2mm} \text{for all} \hspace*{2mm} t \geq 0.
\end{align*}

\noindent Thus, the first claim of Corollary \ref{cor:masses} follows from Theorem \ref{Theo1}, \cite[Theorem 3.9]{Billi1999} and \cite[Theorem 3.1]{Whitt1980} provided that, for each $t \geq 0$,
\begin{equation}
\label{eq:tau}
 \sup_{s \in [0,t]} \left| \kappa_{n, \lfloor s b_n \rfloor} - s \right| \xrightarrow[ ]{\mathbb{P}} 0, \hspace*{3mm} \text{as} \hspace*{2mm} n \rightarrow \infty.
\end{equation}

\noindent This follows as in the proof of Proposition \ref{prop:closeprocesses}.

Finally, we prove the second claim of Corollary \ref{cor:masses}. Following Aldous-Pitman \cite{AldousPitmanI2000}, we recall the construction of the fragmentation process $(\mathbf{F}_{\cT_\theta}(t), t \geq 0)$ associated to the Inhomogeneous CRT $\mathcal{T}_{\theta} = (\mathcal{T}_{\theta}, r_{\theta}, \rho_{\theta}, \mu_{\theta})$ by cutting-down its skeleton through a Poisson point process $\Pi$ of cuts with intensity $\lambda_{\theta} \times dt$ on $\mathcal{T}_{\theta} \times \mathbb{R}_{+}$, where $\lambda_{\theta}$ denotes the length measure of $\mathcal{T}_{\theta}$. For all $t \geq 0$, define an equivalence relation $\sim_{t}$ on $\mathcal{T}_{\theta}$ by saying that $x \sim_{t} y$, for $x, y \in \mathcal{T}_{\theta}$, if and only if no atom of the Poisson process $\Pi$ that has appeared before time $t$ belongs to the geodesic $\llbracket x, y \rrbracket$. These cuts split $\mathcal{T}_{\theta}$ into a continuum forest, which is a countably infinite set of smaller connected components. Let $\mathcal{T}_{\theta,1}^{(t)}, \mathcal{T}_{\theta,2}^{(t)}, \dots$ be the distinct equivalence classes for $\sim_{t}$ (connected components of $\mathcal{T}_{\theta}$), ranked according to the decreasing order of their $\mu_{\theta}$-masses. So, $(\mathbf{F}_{\cT_\theta}(t), t \geq 0)$ is the process given by $\mathbf{F}_{\cT_\theta}(t) = (\mu_{\theta}(\mathcal{T}_{\theta,1}^{(t)}),  \mu_{\theta}(\mathcal{T}_{\theta,2}^{(t)}), \dots)$, for $t \geq 0$ ; in particular, $\mathbf{F}_{\cT_\theta}(0) = (1, 0, 0, \dots)$. 

Consider now that the fragmentation process of $\mathcal{T}_{\theta}$ and its lamination valued-process $(\bL_t(\cT_{\theta}), t \geq 0)$ are constructed from the same Poisson point process $\Pi$. Observe that for any $t \geq 0$, the connected components associated to the fragmentation process of $\mathcal{T}_{\theta}$ at time $t$ are in natural bijection with the faces of $\bL_{t}(\cT_{\theta})$. Let $F^{(t)}$ be a face of $\bL_{t}(\cT_{\theta})$ and $C_{F^{(t)}}$ be the connected component of $\cT_{\theta} \backslash \Pi_t$ coding $F^{(t)}$. Moreover, let $c_{0}$ the unique chord in the boundary of $F^{(t)}$ separating $F^{(t)}$ from $1$ and the other chords $c_{1}, c_{2}, \dots$ bounding $F^{(t)}$, ranked according to the decreasing order of their lengths. Now fix $i \geq 0$. Suppose that $c_{i}$ codes a point $x_i \in \Pi_t$. Then $c_{i}$ splits the unit  circle into two arcs of respective lengths $2\pi \ell_i \coloneqq 2\pi(D(x_i)-G(x_i))$ and $2\pi (1-\ell_i)$ that exactly corresponds to $2\pi$ times the $\mu_\theta$-masses of the two components of $\cT_{\theta} \backslash \{ x_i \}$; see \eqref{LeftRight}. Suppose that $c_{i}$ does not code a point of $\Pi_t$. Then, there exists a sequence of chords $(c_{i}^{k}, k \geq 1)$ in $\bL_{t}(\cT_\theta)$ coding respectively a sequence of points $(x_{i}^{k}, k \geq 1)$ of $\Pi_{t}$ such that $c_{i}^{k} \rightarrow c_{i}$, as $k \rightarrow \infty$, for the Hausdorff distance. In particular, $G(x_i^k)$ and $D(x_i^k)$ converge to some values $G_i, D_i$ such that $c_{i}$ splits the unit circle into two arcs of lengths $2\pi \ell_i \coloneqq  2\pi(D_i-G_i)$ and $2\pi(1-\ell_i)$. On the other hand, $\cT_{\theta} \backslash \bigcup_{k \geq 1} \{x_i^k\}$ also possesses a connected component of $\mu_\theta$-mass $(1-\ell_i)$. Finally, observe that the mass of $F^{(t)}$ and the $\mu_\theta$-mass of $C_{F^{(t)}}$ both can be written as $\ell_0 - \sum_{i \geq 1} \ell_i$. This concludes our proof.
\end{proof}

\paragraph{Acknowledgements.}
We would like to thank Cyril Marzouk and Igor Kortchemski for fruitful discussions about the connection between convergence of trees and convergence of laminations.
We would like to express our gratitude to the referees for their careful and insightful reading of the paper. Their comments led to many improvements, which we believe have made the paper better and more polished.
The second and third authors are supported by the Knut and Alice Wallenberg Foundation, the Ragnar Söderbergs Foundation and the Swedish Research Council.


\providecommand{\bysame}{\leavevmode\hbox to3em{\hrulefill}\thinspace}
\providecommand{\MR}{\relax\ifhmode\unskip\space\fi MR }
\providecommand{\MRhref}[2]{%
  \href{http://www.ams.org/mathscinet-getitem?mr=#1}{#2}
}
\providecommand{\href}[2]{#2}

\end{document}